\pdfoutput=1
\RequirePackage{ifpdf}
\ifpdf 
\documentclass[pdftex]{sigma}
\else
\documentclass{sigma}
\fi

\numberwithin{equation}{section}

\newtheorem{Theorem}{Theorem}[section]
\newtheorem*{Theorem*}{Theorem}
\newtheorem{Corollary}[Theorem]{Corollary}
\newtheorem{Lemma}[Theorem]{Lemma}
\newtheorem{Proposition}[Theorem]{Proposition}
\newtheorem{Fact}[Theorem]{Fact}
\newtheorem{Conjecture}[Theorem]{Conjecture}

\theoremstyle{definition}
\newtheorem{Definition}[Theorem]{Definition}

\newtheorem{Example}[Theorem]{Example}
\newtheorem{Remark}[Theorem]{Remark}
\newtheorem{Notation}[Theorem]{Notation}

\usepackage{bm}
\usepackage{verbatim}
\usepackage{cleveref}
\usepackage{adjustbox}

\renewcommand{\P}{\mathcal{P}}

\newcommand{\R}{\mathbb{R}}

\newcommand{\M}{\mathcal{M}}

\newcommand{\N}{\mathbb{N}}

\newcommand{\mult}{\mathrm{mult}}

\newcommand{\mfp}{\mathfrak{p}}
\newcommand{\mfq}{\mathfrak{q}}

\newcommand{\pols}{\mathcal P_n}

\newcommand{\cc}{\mathbb{C}}

\newcommand{\nn}{\mathbb{N}}

\newcommand{\rr}{\mathbb{R}}

\renewcommand{\tt}{\mathbb{T}}

\newcommand{\zz}{\mathbb{Z}}

\newcommand{\MM}{\mathcal{M}}

\newcommand{\PP}{\mathcal{P}}

\newmuskip\pFqmuskip

\newcommand*\pFqN[6][8]{%
 \begingroup 
 \pFqmuskip=#1mu\relax
 \mathcode` =\string"8000
 \begingroup\lccode`\~=`
 \lowercase{\endgroup\let~}\pFqcomma
 {}_{#2}F_{#3}{\left(\genfrac..{0pt}{}{#4}{#5};#6\right)}%
 \endgroup
}
\newcommand{\pFqcomma}{\mskip\pFqmuskip}

\newcommand*\HGF[5]{%
 {\ }_{#1} F_{#2}{\left(\genfrac..{0pt}{}{#3}{#4};#5\right)}%
}

\newcommand*\HGP[3]{%
 \mathcal{H}_{#1}{\Bigl[\genfrac..{0pt}{1}{#2}{#3}\Bigl]}%
}

\newcommand*\HGPS[3]{%
 \mathcal{H}^{E}_{#1}{\Bigl[\genfrac..{0pt}{1}{#2}{#3}\Bigl]}%
}

\newcommand*\SRM[2]{%
 \rho{\Bigl[\genfrac..{0pt}{1}{#1}{#2}\Bigl]}%
}

\newcommand*\SRMS[2]{%
 \rho^{E}{\Bigl[\genfrac..{0pt}{1}{#1}{#2}\Bigl]}%
}

\newcommand{\bE}{\mathbb{E}}
\newcommand{\sfe}{\mathsf{e}}
\newcommand{\wtilde}{\widetilde}
\newcommand{\la}{\langle}
\newcommand{\ra}{\rangle}

\newcommand{\raising}[2]{\left(#1\right)^{\overline{#2}}}
\newcommand{\falling}[2]{\left(#1\right)^{\underline{#2}}}

\newcommand{\lift}{\bm{S}}
\newcommand{\halfeven}{\bm{Q}}

\newcommand{\dil}[1]{\operatorname{Dil}_{#1}}

\begin{document}

\allowdisplaybreaks

\newcommand{\arXivNumber}{2502.00254}

\renewcommand{\PaperNumber}{108}

\FirstPageHeading

\ShortArticleName{Even Hypergeometric Polynomials and Finite Free Commutators}

\ArticleName{Even Hypergeometric Polynomials\\ and Finite Free Commutators}

\Author{Jacob CAMPBELL~$^{\rm a}$, Rafael MORALES~$^{\rm b}$ and Daniel PERALES~$^{\rm c}$}

\AuthorNameForHeading{J.~Campbell, R.~Morales and D.~Perales}

\Address{$^{\rm a)}$~Department of Mathematics, University of Virginia, VA, USA}
\EmailD{\mail{cgh6uv@virginia.edu}}

\Address{$^{\rm b)}$~Department of Mathematics, Baylor University, TX, USA}
\EmailD{\mail{rafael\_morales2@baylor.edu}}

\Address{$^{\rm c)}$~Department of Mathematics, University of Notre Dame, IN, USA}
\EmailD{\mail{dperale2@nd.edu}}

\ArticleDates{Received April 15, 2025, in final form December 06, 2025; Published online December 21, 2025}

\Abstract{We study in detail the class of even polynomials and their behavior with respect to finite free convolutions. To this end, we use some specific hypergeometric polynomials and a variation of the rectangular finite free convolution to understand even real-rooted polynomials in terms of positive-rooted polynomials. Then, we study some classes of even polynomials that are of interest in finite free probability, such as even hypergeometric polynomials, symmetrizations, and finite free commutators. Specifically, we provide many new examples of these objects, involving classical families of special polynomials (such as Laguerre, Hermite, and Jacobi). Finally, we relate the limiting root distributions of sequences of even polynomials with the corresponding symmetric measures that arise in free probability.\looseness=-1}

\Keywords{finite free probability; free commutator; genralized hypergeometric series; orthogonal polynomials}

\Classification{46L54; 33C20; 33C45}

\section{Introduction}

In the past decade, the subject of finite free probability has grown considerably due to its connections with geometry of polynomials, combinatorics, random matrix theory, and free probability. The core objects of study in finite free probability are polynomials of a fixed degree~$n$, and some convolution operations on such polynomials. These convolutions, called additive and multiplicative finite free convolutions and denoted by $\boxplus_n$ and $\boxtimes_n$, were studied a century ago \cite{szego1922bemerkungen,walsh1922location} but were recently rediscovered \cite{MSS15} as expected characteristic polynomials of certain random matrices. The finite free convolutions preserve various real-rootedness and interlacing properties, and when $n \to \infty$, they approximate the additive and multiplicative free convolutions of Voiculescu~\cite{voiculescu1992free}, which are denoted by $\boxplus$ and $\boxtimes$.

Since \cite{MSS15} first appeared in 2015, there have been several developments in finite free probability which expand on the parallel with free probability. In \cite{marcus}, Marcus developed finite $R$- and $S$-transforms in analogy with the corresponding analytic functions in free probability. On the other hand, the combinatorial side of finite free probability was developed by Arizmendi, Garza-Vargas, and Perales \cite{arizmendi2021finite,arizmendi2018cumulants} with a cumulant sequence for finite free convolutions, in analogy with the work of Nica and Speicher on free cumulants \cite{NS-book}.

There are two other operations on measures in free probability which have finite analogues that are relevant to this paper. One of them is the rectangular additive convolution studied by Gribinski and Marcus \cite{gribinski2022rectangular}, which is analogous to the rectangular free convolution defined by Benaych-Georges \cite{benaych2009rectangular}. The other is the finite free commutator operation studied by Campbell~\cite{campbell2022commutators}, which is analogous to the free commutator operation studied by Nica and Speicher \cite{NS98}. A~common thread between these two operations is their relation to \emph{even} polynomials: these are the polynomials whose roots come in positive-negative pairs.

Thus far, the class of even polynomials has not been studied in general in the context of finite free probability. In this paper, we carry out a detailed study of the class of even polynomials and compile some basic but useful results concerning their behavior with respect to finite free convolutions. In many cases, convolutions of even real-rooted polynomials can be reinterpreted as convolutions of polynomials with all non-negative roots and some specific hypergeometric polynomials that were studied recently in \cite{mfmp2024hypergeometric}. We should also mention that while working on this project, we became aware of a recent preprint \cite{campbell2024universality} that studies even entire functions in connection to finite free probability.

The basic tool in this paper is the map that takes an even polynomial, i.e., a polynomial of the form $\tilde{q}(x)=q\bigl(x^2\bigr)$, and returns the polynomial $q(x)$. We call this map $\halfeven_{m}$, where $m$ is the degree of $q$. An important property of this map is that $\tilde{q}(x)$ is real-rooted if and only if $q(x)$ is positive-rooted; this is helpful because the behavior of positive-rooted polynomials with respect to finite free convolutions is well understood. However, $\halfeven_{m}$ does not quite preserve finite free convolutions
\[\halfeven_{m}(p \boxplus_{2m} q) \neq \halfeven_{m}(p) \boxplus_m \halfeven_{m}(q)\qquad \text{and} \qquad\halfeven_{m}(p \boxtimes_{2m} q) \neq \halfeven_{m}(p) \boxtimes_m \halfeven_{m}(q).\]

To make these into equalities, we modify the right-hand sides using some particular hypergeometric polynomials of the kind studied in \cite{mfmp2024hypergeometric}. In the additive case, the result is related to a~generalization of the rectangular convolution of \cite{gribinski2022rectangular}. This generalization was first mentioned in a talk by Gribinski \cite{TalkGribinski}, and was recently studied in \cite{campbell2024free}. We also study an interesting operation in finite free probability which takes a single polynomial $p$ and returns the even polynomial~${p(x) \boxplus_n p(-x)}$, which we call the \emph{symmetrization} of $p$. This operation appeared naturally in \cite{campbell2022commutators} in relation to the finite free commutator.

Then, we establish even versions of the algebraic results of \cite{mfmp2024hypergeometric}, which concern the behavior of even hypergeometric polynomials with respect to finite free convolutions. In particular, we use some known product identities concerning hypergeometric series to provide many non-trivial examples of symmetrizations of hypergeometric polynomials (such as Laguerre, Hermite, or Jacobi) which are of interest in finite free probability.

We also use our framework of even hypergeometric polynomials to provide some new insight into finite free commutators: the result of \cite{campbell2022commutators} can be phrased in terms of even hypergeometric polynomials, and we provide some partial results concerning real-rootedness. We work out many examples of finite free commutators, and connect them asymptotically with known examples of commutators in free probability.

Finally, we study the asymptotic behavior of even polynomials in connection to free probability. In some cases we obtain new results in free probability. For instance, in the limit, the symmetrization operation tends to the analogous operation on probability measures, which we call free symmetrization: given a probability measure $\mu$, its free symmetrization is the measure~${\mu \boxplus \wtilde{\mu}}$, where $\wtilde{\mu}$ is just the pushforward of $\mu$ by $x\mapsto -x$. To the best of our knowledge this type of operation has not been studied systematically in free probability, but it has appeared sporadically in different contexts (see Remark~\ref{rem:free.sym}). Our machinery allows us to compute the free symmetrizations of some special distributions, including the Marchenko--Pastur, reversed Marchenko--Pastur, and free beta distributions.

Besides this introductory section, the rest of the paper is organized as follows. In Section~\ref{sec:prelim}, we review some preliminaries on polynomials, measures, and finite free probability. In Section~\ref{sec:evenpol}, we establish our basic framework for even polynomials and study its behavior in relation to finite free convolutions. We specialize to the study of even hypergeometric polynomials in Section~\ref{sec:EvenHyper}. In~Section~\ref{sec:commutator}, we use even hypergeometric polynomials to study the finite free commutator operation and provide many examples. In Section~\ref{sec:asymptotics}, we study the asymptotic behavior of families of even polynomials and relate it to results in free probability. See Section~\ref{sec:results.standard.notation} for a~summary of the main results using standard hypergeometric series notation ${}_iF_j$.

\section{Preliminaries}
\label{sec:prelim}

\subsection{Polynomials and their coefficients} \label{subsec:polynomials}

We start by introducing some notation. We denote by $\P_n$ the set of monic polynomials (over the complex plane $\cc$) of degree $n$. To specify that all the roots of a polynomial belong to a~specific region $K \subseteq \cc$, we use the notation $\P_n(K)$. For most of our results, $K$ is going to be either the set of real numbers $\rr$, the set positive real number $\rr_{>0}$, or the set of negative real numbers~$\rr_{<0}$.

\begin{Notation}[roots and coefficients]
 Given a polynomial $p \in \P_n$, we denote its roots by $\lambda_1(p), \dots, \lambda_n(p)$. Every polynomial $p \in \P_n$ can be written in the form
 \begin{equation}\label{monicP}
 p(x)=\sum_{k=0}^n x^{n-k} (-1)^k \sfe_k(p)
 \end{equation}
 for some coefficients $\sfe_k(p)$. Here, $\sfe_0(p)=1$ as $p$ is monic. There is a specific formula for these coefficients
 \[ \sfe_k(p) = \sum_{1 \leq i_1 < \dots < i_k \leq n} \lambda_{i_1}(p) \cdots \lambda_{i_k}(p) \]
 for $0 \leq k \leq n$. These are the so-called \emph{elementary symmetric polynomials} in the roots of $p$.
\end{Notation}

\begin{Notation}[dilation]
 The \emph{dilation} of a polynomial $p$ by a non-zero scalar $\alpha$ is defined as ${ [\dil{\alpha} p](x) = \alpha^{n} p\bigl(\alpha^{-1} x\bigr)}$.
 The roots of $\dil{\alpha} p$ are the roots of $p$ scaled by $\alpha$: $\alpha \lambda_1(p), \dots, \allowbreak\alpha \lambda_n(p)$.
\end{Notation}

\subsection{Measures and asymptotic empirical root distribution}
\label{sec:measures}

We will be interested in sequences of polynomials with increasing degree, whose zero distribution tend in the limit to a probability measure. We denote by $\M$ the set of probability measures on the complex plane. Similar to our notation for polynomials, for $K \subseteq \cc$, we denote by $\M(K)$ the set of probability measures supported on $K$. In particular, $\M(\rr)$ is the set of probability measures supported on the real line. We also denote by $\M^E(\rr)$ the set of symmetric probability measures on $\rr$.

\begin{Notation}[Cauchy transform]
 For a probability measure $\mu \in \M(\rr)$, the \emph{Cauchy transform} of $\mu$ is defined by
 \[ G_{\mu}(z) := \int_{\rr} \frac{1}{z-t} {\rm d}\mu(t). \]
 This is an analytic function from the upper half-plane to the lower half-plane. Among other things, the Cauchy transform encodes weak convergence: for a sequence $(\mu_n)_{n \geq 1}$ in $\M(\rr)$ and a measure $\mu \in \M(\rr)$, we have $\mu_n \to \mu$ weakly if and only if $G_{\mu_n}(z) \to G_{\mu}(z)$ pointwise. This result can be found, e.g., in \cite[Remark 12, Theorem 13, Lemma~3 of Section~3.1]{MS-book}.
\end{Notation}

\begin{Notation} \label{nota:Q.measures}
 For $\mu \in \M^E(\rr)$, let $Q(\mu)$ be the pushforward of $\mu$ along the map $\rr \to \rr_{\geq 0} \colon\allowbreak {t \mapsto t^2}$. This is a probability measure supported on $\rr_{\geq 0}$, and it has appeared before in the free probability literature \cite{arizmendi2009S, NS98}. A useful description of $Q(\mu)$ was given in \cite[Proposition~5]{arizmendi2009S} in terms of the Cauchy transform:
$ G_{\mu}(z) = z G_{Q(\mu)}\bigl(z^2\bigr)$.
 In the other direction, for $\nu \in \M(\rr_{\geq 0})$, write
$S(\nu) = \frac{1}{2}(S_+(\nu)+S_-(\nu)) $,
 where $S_+(\nu)$ and $S_-(\nu)$ are the pushforwards of $\nu$ along the maps
$\rr_{\geq 0} \to \rr_{\geq 0} \colon t \mapsto \sqrt{t}$ and $\rr_{\geq 0} \to \rr_{\leq 0} \colon t \mapsto -\sqrt{t}$,
 respectively.
\end{Notation}

\begin{Lemma}\label{lem:Q.homeomorphism}
 $Q \colon \M^E(\rr) \to \M(\rr_{\geq 0})$ is a homeomorphism with respect to weak convergence, with inverse $S$.
\end{Lemma}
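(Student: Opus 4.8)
The plan is to show that $Q$ and $S$ are mutually inverse bijections, and then that both are continuous with respect to weak convergence; since a continuous bijection between these spaces with continuous inverse is a homeomorphism, this suffices.

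First I would verify the bijection at the level of measures. Given $\mu \in \M^E(\rr)$, symmetry means $\mu = \frac{1}{2}(\mu_+ + \mu_-)$ where $\mu_\pm$ are the restrictions of $\mu$ to $\rr_{\geq 0}$ and $\rr_{\leq 0}$ (care is needed with an atom at $0$, but this can be absorbed by noting that $t \mapsto t^2$ and its inverse branches behave consistently at $0$). Then $Q(\mu)$ is the pushforward by $t \mapsto t^2$, and applying $S$ splits it back into the two square-root branches; by symmetry of $\mu$ these recombine to give $\mu$ again, so $S \circ Q = \mathrm{id}$. Conversely, for $\nu \in \M(\rr_{\geq 0})$, the measure $S(\nu) = \frac{1}{2}(S_+(\nu) + S_-(\nu))$ is manifestly symmetric (the two summands are reflections of each other), and pushing it forward by $t \mapsto t^2$ sends both $S_\pm(\nu)$ back to $\nu$, so $Q \circ S = \mathrm{id}$. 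The cleanest way to package all of this is via the Cauchy transform identity $G_\mu(z) = z\, G_{Q(\mu)}(z^2)$ quoted from \cite[Proposition~5]{arizmendi2009S}: this determines $Q(\mu)$ from $\mu$ uniquely (Cauchy transforms determine measures), and the analogous computation $G_{S(\nu)}(z) = z\, G_\nu(z^2)$ — which follows by a direct change of variables in the definition of $S_+$ and $S_-$ — shows $Q(S(\nu)) = \nu$.

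For continuity, I would use the criterion already recalled in the Cauchy transform notation: $\mu_n \to \mu$ weakly iff $G_{\mu_n}(z) \to G_\mu(z)$ pointwise on the upper half-plane. If $\mu_n \to \mu$ in $\M^E(\rr)$, then $G_{\mu_n}(z) \to G_\mu(z)$ for all $z \in \Hy$; using $G_{\mu_n}(z) = z\, G_{Q(\mu_n)}(z^2)$ and the same for $\mu$, we get $z\, G_{Q(\mu_n)}(z^2) \to z\, G_{Q(\mu)}(z^2)$, hence $G_{Q(\mu_n)}(w) \to G_{Q(\mu)}(w)$ for every $w$ in the image of $\Hy$ under $z \mapsto z^2$ (which is all of $\cc \setminus \rr_{\geq 0}$, in particular a set with an accumulation point, more than enough), so $Q(\mu_n) \to Q(\mu)$ weakly. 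The argument for $S$ is symmetric, using $G_{S(\nu)}(z) = z\, G_\nu(z^2)$ in the same way. An alternative, if one wants to avoid any subtlety about which region of $\Hy$ maps where, is to test weak convergence directly against bounded continuous functions: $\int f \, {\rm d}Q(\mu_n) = \int f(t^2)\, {\rm d}\mu_n(t) \to \int f(t^2)\, {\rm d}\mu(t) = \int f\, {\rm d}Q(\mu)$ since $t \mapsto f(t^2)$ is bounded continuous, and likewise $\int g\, {\rm d}S(\nu_n) = \frac{1}{2}\int g(\sqrt t)\, {\rm d}\nu_n(t) + \frac{1}{2}\int g(-\sqrt t)\, {\rm d}\nu_n(t)$, with $t \mapsto g(\pm\sqrt t)$ bounded continuous on $\rr_{\geq 0}$.

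The only genuinely delicate point — and the one I would be most careful about — is the behavior at the origin: the map $t \mapsto \sqrt t$ is continuous on $\rr_{\geq 0}$ but one should check that an atom of $\mu$ at $0$ is handled consistently (it contributes an atom of the same mass at $0$ in $Q(\mu)$, and $S$ returns it unchanged since $\sqrt 0 = -\sqrt 0 = 0$). Everything else is a routine change of variables. I would therefore present the Cauchy-transform computation $G_{S(\nu)}(z) = z\,G_\nu(z^2)$ as the one small lemma to check, and then assemble $S\circ Q = \mathrm{id}$, $Q \circ S = \mathrm{id}$, and the two continuity statements from it together with the already-quoted identity $G_\mu(z) = z\,G_{Q(\mu)}(z^2)$ and the Cauchy-transform convergence criterion.
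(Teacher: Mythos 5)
Your proposal is correct, and its second route --- checking $S\circ Q=\mathrm{id}$ and $Q\circ S=\mathrm{id}$ by integrating against bounded continuous functions, and getting continuity of both maps from the fact that pushforwards along continuous maps preserve weak limits --- is exactly the paper's (very brief) proof. The Cauchy-transform packaging via $G_\mu(z)=zG_{Q(\mu)}\bigl(z^2\bigr)$ is a fine alternative but is not needed; the paper does not use it here.
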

\begin{proof}
 The map $Q$ is weakly continuous because it is a pushforward along a continuous map~\cite[Theorem 2.7]{Billingsley-convergence}. One can check that $S$ is the inverse of $Q$ by integrating against bounded continuous functions, and $S$ is weakly continuous for the same reason as $Q$.
\end{proof}

In this paper, we will often be concerned with root distributions of polynomials. For a~non-zero polynomial $p$, the \emph{empirical root distribution} (or \emph{zero counting measure}) of $p$ is the measure
 \[ \rho(p) := \frac{1}{\deg(p)} \sum_{\alpha \text{ root of } p} \delta_{\alpha} \in \M ,\]
 where the roots in the sum are counted with multiplicity and $\delta_\alpha$ is the Dirac delta (unit mass) placed at the point $\alpha$.

\begin{Definition}
 \label{def:converging}
 We say that a sequence $\mathfrak p= \left(p_{n_k}\right)_{k\geq1}$ of polynomials is \emph{converging} if
 \begin{itemize}\itemsep=0pt
 \item $(n_k)_{k \geq 1}$ is a strictly increasing subsequence of integers,
 \item $p_{n_k}$ has degree $n_k$ for $k \geq 1$, and
 \item there is a measure, denoted by $\rho(\mfp)$, such that $\rho(p_{n_k}) \to \rho(\mfp)$ weakly as $k \to \infty$.
 \end{itemize}
 For our purposes, the subsequence $(n_k)_{k \geq 1}$ will usually be the full sequence of integers or the subsequence of even integers.
\end{Definition}

\subsection{Free convolution}

In this paper, we will occasionally use some tools from free probability. In this section we will briefly review the facts we need, following the references \cite{MS-book,NS-book}.

Given a compactly supported measure $\mu \in \M(\rr)$ with moment sequence $(m_k)_{k \geq 0}$, the \emph{moment generating function} of $\mu$ is given by
\[ M_{\mu}(z) = \sum_{k=1}^{\infty} m_k z^k , \]
and the Cauchy transform $G_{\mu}$ has the Laurent expansion
\[ G_{\mu}(z) = \sum_{k=0}^{\infty} m_k z^{-(k+1)} \]
in a neighborhood of $\infty$. This yields the relation $M_{\mu}(z) = z^{-1} G_{\mu}\bigl(z^{-1}\bigr) - 1$.

The \emph{$R$-transform} and \emph{$S$-transform} of $\mu$ are defined by
\[ R_{\mu}(z) = G_{\mu}^{\la -1 \ra}(z) - \frac{1}{z} \qquad \text{and} \qquad S_{\mu}(z) = \frac{z+1}{z} M_{\mu}^{\la -1 \ra}(z). \]
Here, $\langle -1 \rangle$ denotes the compositional inverse, whose existence and analyticity around the origin for \smash{$G_{\mu}^{\langle -1 \rangle}(z)$} and \smash{$M_{\mu}^{\langle -1 \rangle}(z)$} are guaranteed provided that $m_0 = 1$ and $m_1 \neq 0$. The free additive and multiplicative convolutions correspond to the sum and product of free random variables; alternatively one can compute them using the \emph{$R$-transform} and \emph{$S$-transform}, see \cite[Definition~3.47 and Theorem 4.23]{MS-book}.

\begin{Definition}
 Let $\mu,\nu \in \M(\rr)$ be compactly supported. Then
 \begin{enumerate}\itemsep=0pt
 	\item[(1)] their \emph{free additive convolution} is the unique measure $\mu \boxplus \nu$ satisfying
$R_{\mu \boxplus \nu}(z) = R_{\mu}(z) + R_{\nu}(z)$;
 \item[(2)] if $\mu,\nu \in \M(\rr_{\geq 0})$, their \emph{free multiplicative convolution} is the unique measure $\mu \boxtimes \nu \in \M(\rr_{\geq 0})$ satisfying
$
 S_{\mu \boxtimes \nu}(z) = S_{\mu}(z) S_{\nu}(z)$.
 \end{enumerate}
\end{Definition}

\subsection{Finite free convolution of polynomials}

In this subsection, we summarize some definitions and results on the finite free additive and multiplicative convolutions that will be used throughout this paper. First, let us establish some notation for rising and falling factorials.

\begin{Notation}
 For $a \in \cc$ and $k \in \zz_{\geq 0}:=\N\cup\{0\}$, the \emph{rising} and \emph{falling factorials}\footnote{The rising and falling factorials are both sometimes called the \emph{Pochhammer symbol}. The notation $(x)_n$ is sometimes used to refer to either the rising or falling factorial; we prefer the clear notation laid out in the text.} are~respectively defined as
 \smash{$ \raising{a}{k} := a(a+1) \cdots (a+k-1) $}
 and
$ \smash{\falling{a}{k} }:= a(a-1) \cdots (a-k+1)= \smash{\raising{a-k+1}{k}}$.
 The following relations follow from the definition and will be useful later. For $a \in \cc$ and $j,k \in \zz_{\geq 0}$ with $j \leq k$, we have
 \begin{gather}
 \falling{a}{k} = (-1)^k \raising{-a}{k} ,\qquad
 \raising{a}{k} = \raising{a}{k-j} \falling{a+k-1}{j}, \qquad\text{and} \nonumber\\
 \falling{2a}{2k} = 2^{2k} \falling{a}{k} \falling{a-\frac{1}{2}}{k}. \label{eq.falling.factorial.2m}
 \end{gather}
\end{Notation}

We are now ready to introduce the multiplicative and additive convolutions, as defined in~\cite[Definitions~1.1 and~1.4]{MSS15}.

\begin{Definition}[additive and multiplicative convolutions] \label{def:FinFreeConv}
 Consider polynomials $p,q \in \P_n$. We define the \emph{finite free additive convolution} of $p$ and $q$ as the polynomial $p \boxplus_n q\in \P_n$ with coefficients given by
 \[ 
 \sfe_k(p \boxplus_n q) = \falling{n}{k} \sum_{i+j=k} \frac{ \sfe_i(p) \sfe_j(q)}{\falling{n}{i} \falling{n}{j}} \qquad \text{for}\quad 0 \leq k \leq n.
 \]
 We define the \emph{finite free multiplicative convolution} of $p$ and $q$ as the polynomial $p \boxtimes_n q\in \P_n$ with coefficients given by
 \[
 \sfe_k(p \boxtimes_n q) = \frac{1}{\binom{n}{k}} \sfe_k(p) \sfe_k(q) \qquad \text{for}\quad 0 \leq k \leq n.
 \]
\end{Definition}

\begin{Remark}
 \label{rem:additive.conv.diff.form}
 We will also use an equivalent description of $\boxplus_n$ in terms of certain differential operators, which was also given in \cite[equation~(2)]{MSS15}. Namely, if we can write $p(x) = P\left(\frac{\partial}{\partial x}\right) x^n$ and $q(x) = Q\left(\frac{\partial}{\partial x}\right) x^n$ for some polynomial $P$ and $Q$, then
 \begin{equation} \label{thirdDefAdditiveConv}
 p(x) \boxplus_n q(x) = P\left(\frac{\partial}{\partial x}\right)Q\left(\frac{\partial}{\partial x}\right)x^n.
 \end{equation}
 Notice that for a given $p$, there is a unique polynomial $P$ of degree $n$ satisfying $p(x) = P\left(\frac{\partial}{\partial x}\right) x^n$, and is explicitly given by
 \[
 P\left(\frac{\partial}{\partial x}\right) = \sum_{k=0}^n (-1)^k \frac{\sfe_k(p)}{\falling{n}{k}} \left(\frac{\partial}{\partial x}\right)^k.
 \]
 To construct other differential operators that yield~$p$, one can add terms of the form $\left(\frac{\partial}{\partial x}\right)^k$ with~${k>n}$, which vanish when applied to~$x^n$.
\end{Remark}

\begin{Remark}[basic properties] \label{rem.convolution.basic.facts}
 Directly from the definition, one can derive some basic properties of the binary operations $\boxplus_n$ and $\boxtimes_n$. For example, they are bilinear, associative, and commutative. Another property of $\boxtimes_n$ is related to scaling: for $\alpha \in \cc$ and $p \in \P_n$, we have
$ p(x) \boxtimes_n (x-\alpha)^n = \dil{\alpha} p$.
\end{Remark}

The main property of finite free convolutions is that they preserve real-rootedness, in the following sense.

\begin{Theorem}[\cite{szego1922bemerkungen,walsh1922location}] \label{thm:realrootedness}
 Let $p, q \in \P_n(\R)$. Then
 \begin{enumerate}\itemsep=0pt
 \item[$(1)$] $p \boxplus_n q \in \P_n(\rr)$;
 \item[$(2)$] if either $p \in \P_n(\rr_{\geq 0})$ or $q \in \P_n(\rr_{\geq 0})$, then $p \boxtimes_n q \in \P_n(\rr)$;
 \item[$(3)$] if both $p,q \in \P_n(\rr_{\geq 0})$, then $p \boxtimes_n q \in \P_n(\rr_{\geq 0})$.
 \end{enumerate}
\end{Theorem}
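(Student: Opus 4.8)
The plan is to use the differential-operator description of $\boxplus_n$ from Remark~\ref{rem:additive.conv.diff.form} together with the classical real-rootedness-preservation properties of certain operators, and to reduce the multiplicative statements to the additive one. For part~(1), write $p(x) = P\!\left(\frac{\partial}{\partial x}\right)x^n$ and $q(x) = Q\!\left(\frac{\partial}{\partial x}\right)x^n$ as in \eqref{thirdDefAdditiveConv}, so that $p \boxplus_n q = P\!\left(\frac{\partial}{\partial x}\right)Q\!\left(\frac{\partial}{\partial x}\right)x^n$. The key input is that if $p$ has only real roots, then $P$ (of degree $n$) also has only real roots, since $P$ is obtained from $p$ by reversing coefficients up to the nonzero scalars $\falling{n}{k}$ and then $\falling{n}{k}$ has constant sign in $k$; more precisely one checks that $P(x)$ and $x^n p(1/x)$ differ by the substitution absorbing the factorials, and the relevant statement is that a real-rooted polynomial operator $P\!\left(\frac{\partial}{\partial x}\right)$ maps real-rooted polynomials of degree $\le n$ to real-rooted polynomials; this is a theorem of Hermite--Poulain type, valid because $P\!\left(\frac{\partial}{\partial x}\right)$ factors as a product of operators $\left(a_i - \frac{\partial}{\partial x}\right)$ with $a_i \in \rr$, each of which preserves real-rootedness (this is exactly Rolle's theorem / the Hermite--Poulain theorem). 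Applying $P\!\left(\frac{\partial}{\partial x}\right)$ to the real-rooted polynomial $q(x) = Q\!\left(\frac{\partial}{\partial x}\right)x^n$ then yields a real-rooted polynomial, which is $p \boxplus_n q$.

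For part~(3), I would use the multiplicative formula $\sfe_k(p \boxtimes_n q) = \binom{n}{k}^{-1}\sfe_k(p)\sfe_k(q)$ directly, but it is cleaner to invoke the interlacing/Grace--Walsh--Szeg\H{o} machinery: the statement that $\boxtimes_n$ of two polynomials with all roots in $\rr_{\ge 0}$ again has all roots in $\rr_{\ge 0}$ follows because $\boxtimes_n$ can be realized as a coefficient-wise operation that is a limit (or specialization) of the Schur--Szeg\H{o} composition, which by the Walsh coincidence theorem sends pairs of polynomials with roots in a circular region to polynomials with roots in the same region; taking the region to be $\rr_{\ge 0}$ (or rather a half-line, handled via a limiting argument from sectors) gives the claim. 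Concretely, one can also argue by continuity and a direct sign analysis: if all $\sfe_k(p) \ge 0$ and all $\sfe_k(q) \ge 0$ (which holds for polynomials with nonnegative roots, up to the alternating sign convention in \eqref{monicP}), then all $\sfe_k(p\boxtimes_n q) \ge 0$, so $p \boxtimes_n q$ has no negative... — wait, nonnegativity of coefficients alone does not give real-rootedness, so the honest route is the Walsh/Grace argument, which I would cite from \cite{MSS15} and \cite{walsh1922location}.

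For part~(2), the mixed case, I would deduce it from part~(3) together with a perturbation/limiting argument: given $p \in \P_n(\rr)$ and $q \in \P_n(\rr_{\ge 0})$, translate $p$ by a large positive constant $c$ so that $\dil{}$... more precisely consider $p_c(x) := p(x - c)$ for $c \gg 0$, which lies in $\P_n(\rr_{>0})$; by part~(3), $p_c \boxtimes_n q \in \P_n(\rr_{\ge 0})$, and then one tracks how $\boxtimes_n$ interacts with translation. However, $\boxtimes_n$ does not commute with additive translation in a simple way, so instead I would use the operator realization: $p \boxtimes_n q$ can be written as $q\!\left(x \tfrac{\partial}{\partial x}\right)$-type action, or one appeals again to the Walsh coincidence theorem with one polynomial's roots confined to a half-line and the other's to all of $\rr$ — the product region is then all of $\rr$.

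The main obstacle I expect is part~(2)/(3): establishing the half-plane/half-line version of the Grace--Walsh--Szeg\H{o} coincidence theorem needed to control $\boxtimes_n$, since the standard statement is for disks and their complements, and extending it to the unbounded convex region $\rr_{\ge 0}$ requires either a careful limiting argument through sectors or a direct interlacing argument. Parts~(1) is comparatively routine once the Hermite--Poulain theorem is in hand. Since the theorem is attributed to \cite{szego1922bemerkungen,walsh1922location}, in the write-up I would simply cite those sources (and \cite{MSS15} for the modern formulation) rather than reproving the coincidence theorem from scratch.
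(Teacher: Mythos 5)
The first thing to note is that the paper offers no proof of Theorem~\ref{thm:realrootedness} at all: it is stated as a classical result with citations to \cite{szego1922bemerkungen} and \cite{walsh1922location}. Measured against that, your decision at the end to ``simply cite those sources'' is consistent with the paper; the problem is that the argument you do sketch for part~(1) rests on a false lemma. You claim that if $p \in \P_n(\rr)$ and $p(x) = P\bigl(\frac{\partial}{\partial x}\bigr)x^n$, then $P$ is itself real-rooted, so that $P\bigl(\frac{\partial}{\partial x}\bigr)$ factors into real operators $\bigl(a_i - \frac{\partial}{\partial x}\bigr)$ and Hermite--Poulain applies. This already fails for $n=2$: take $p(x)=(x-1)^2$, so $\sfe_1(p)=2$, $\sfe_2(p)=1$, and
\[
P(y) \;=\; 1 - \frac{2}{\falling{2}{1}}\,y + \frac{1}{\falling{2}{2}}\,y^2 \;=\; 1 - y + \tfrac{1}{2}y^2,
\]
whose roots are $1\pm i$, even though one checks directly that $P\bigl(\frac{\partial}{\partial x}\bigr)x^2=(x-1)^2$. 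The reason is that passing from the reversal $x^np(1/x)$ to $P$ multiplies the $k$-th coefficient by $1/\falling{n}{k}$, and this rescaling is not a multiplier sequence in the P\'olya--Schur sense (for $n=2$ the test polynomial $1+2x+2x^2$ is not real-rooted). Consequently $P\bigl(\frac{\partial}{\partial x}\bigr)$ does \emph{not} preserve real-rootedness of arbitrary real-rooted inputs of degree $\le n$; the content of Walsh's theorem is the weaker, genuinely bilinear statement that it does so when applied to a real-rooted polynomial of degree exactly $n$, and that is not a consequence of Hermite--Poulain. The classical proofs, and the modern treatment in \cite{MSS15}, go through Grace's apolarity theorem / the Walsh coincidence theorem (or interlacing families), not through a factorization of the symbol.

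For parts~(2) and~(3) you do not actually produce an argument: you rightly discard the ``nonnegative coefficients'' idea, and the translation trick for~(2) stalls because $\boxtimes_n$ does not interact simply with additive shifts, so in both cases you fall back on citing the Grace--Walsh--Szeg\H{o} coincidence theorem for circular regions (with lines and half-lines as degenerate circular regions, exactly as the paper itself does for $\P_n(\tt)$ in Lemma~\ref{lem:multiplicative.unit.circle} via \cite{marden1966geometry}). That deferral is acceptable and matches the paper; part~(1) should be deferred in the same way rather than supported by the incorrect Hermite--Poulain reduction.
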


\begin{Remark}
 \label{rem:ruleofsigns}
 It is easy to extend (3) in Theorem~\ref{thm:realrootedness} to a ``rule of signs" for the behavior of roots under the operation $\boxtimes_n$. Specifically, since $\dil{-1} p = p \boxtimes_n (x+1)^n$ by Remark~\ref{rem.convolution.basic.facts}, we have the following:
 \begin{itemize}\itemsep=0pt
 \item if $p,q \in \P_n(\rr_{\leq 0})$, then $p \boxtimes_n q \in \P_n(\rr_{\geq 0})$;
 \item if $p \in \P_n(\rr_{\leq 0})$ and $q \in \P_n(\rr_{\geq 0})$, then $p \boxtimes_n q \in \P_n(\rr_{\leq 0})$.
 \end{itemize}
\end{Remark}

Moreover, the good behavior of the roots under these convolutions goes beyond the real line. From the work of Walsh \cite{walsh1922location} and Szeg\H{o} \cite{szego1922bemerkungen}, one can also bound the location of the roots in the complex plane of $p\boxtimes_n q$ from the location of the roots of $p$ and $q$, see also \cite[Theorems 16.1 and 18.1]{marden1966geometry}. Using the result by Szeg\H{o}, one can show that the set of polynomials with all the roots in the unit circle $\tt:=\{z\in \cc\mid |z|=1\}$ is closed under multiplicative convolution.

\begin{Lemma}[$\P_{n}(\tt)$ is closed under multiplicative convolution]
\label{lem:multiplicative.unit.circle}
Let $n\in\nn$ and assume $p,q\in\P_{n}(\tt)$, then $p\boxtimes_n q\in\P_{n}(\tt)$.
\end{Lemma}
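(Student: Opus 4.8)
The plan is to derive the claim from Szeg\H{o}'s classical theorem on the composition of polynomials, which coincides with $\boxtimes_n$ up to a reflection of the variable. For $p(x)=\sum_{k=0}^n\binom{n}{k}a_k x^k$ and $q(x)=\sum_{k=0}^n\binom{n}{k}b_k x^k$, recall the \emph{Szeg\H{o} composition} $(p\odot q)(x)=\sum_{k=0}^n\binom{n}{k}a_k b_k x^k$. Comparing this definition with the coefficient formula for $\boxtimes_n$ in Definition~\ref{def:FinFreeConv} (and keeping track of the binomial normalizations and of the signs $(-1)^k$ in~\eqref{monicP}), one checks that $p\boxtimes_n q=\dil{-1}(p\odot q)$; equivalently, the roots of $p\boxtimes_n q$ are exactly the negatives of the roots of $p\odot q$. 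Since $\tt$ is invariant under $z\mapsto-z$, and $\dil{-1}$ preserves $\P_n(\tt)$, it therefore suffices to show that $p\odot q\in\P_n(\tt)$ whenever $p,q\in\P_n(\tt)$.

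Now I would invoke Szeg\H{o}'s theorem (\cite{szego1922bemerkungen,walsh1922location}; see also \cite[Theorems 16.1 and 18.1]{marden1966geometry}): if all the roots of $p$ lie in a circular region $K$ and $\beta_1,\dots,\beta_n$ are the roots of $q$, then every root of $p\odot q$ has the form $-\alpha\beta_j$ for some $\alpha\in K$ and some $1\le j\le n$. Apply this twice. First take $K=\{z\in\cc:|z|\le 1\}$, which is a circular region containing all roots of $p$ (they lie on $\tt$); then every root of $p\odot q$ equals $-\alpha\beta_j$ with $|\alpha|\le 1$ and $|\beta_j|=1$, so it has modulus at most $1$. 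Next take $K=\{z\in\cc:|z|\ge 1\}$, which is again a circular region and still contains all roots of $p$; then every root of $p\odot q$ equals $-\alpha\beta_j$ with $|\alpha|\ge1$ and $|\beta_j|=1$, so it has modulus at least $1$. Combining the two bounds, every root of $p\odot q$ has modulus exactly $1$, i.e.\ $p\odot q\in\P_n(\tt)$; hence $p\boxtimes_n q=\dil{-1}(p\odot q)\in\P_n(\tt)$, as desired.

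The main point requiring care is the first step: the dictionary $p\boxtimes_n q=\dil{-1}(p\odot q)$ must be set up with the right binomial factors and signs, and one must quote the form of Szeg\H{o}'s theorem that is valid for an \emph{arbitrary} circular region, so that the unbounded region $\{|z|\ge 1\}$ is legitimate --- this is what lets the two applications pinch the roots onto $\tt$. After that the proof is a two-line computation. As a sanity check, for $p=q=(x-1)^n$ (the unit of $\boxtimes_n$) one has $p\odot q=(x+1)^n$, with its single root $-1\in\tt$, and $\dil{-1}(x+1)^n=(x-1)^n=p\boxtimes_n q$, consistent with the displayed identity.
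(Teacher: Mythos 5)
Your proposal is correct and follows essentially the same route as the paper: both apply Szeg\H{o}'s composition theorem (Marden, Theorem~16.1) twice, once with the circular region $\{|z|\le 1\}$ and once with $\{|z|\ge 1\}$, and intersect to pin the roots onto $\tt$, accounting for the dilation by $-1$ relating the Szeg\H{o} composition to $\boxtimes_n$. The only difference is that you spell out the coefficient dictionary $p\boxtimes_n q=\dil{-1}(p\odot q)$ explicitly, which the paper relegates to a footnote.
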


This result is the particular case $\theta=0$ of \cite[Theorem 5]{suffridge1976starlike}. For the reader's convenience, we will provide a direct proof using Szeg\H{o}'s theorem (see \cite[Theorem~16.1]{marden1966geometry}).

\begin{proof}
Recall from \cite[Section 12]{marden1966geometry} that circular regions include the closed interior or exterior of circles. Since $p\in\P_{n}(\tt)$, in particular the zeros of $p$ lie in the circular region $D:=\{z\in\cc\mid |z|\leq 1\}$. By \cite[Theorem~16.1]{marden1966geometry},\footnote{Note that in Szeg\H{o}'s theorem, $h$ is the multiplicative convolution of $f$ and $g$ up to a dilation by $-1$, namely~${h(-z)=f(z)\boxtimes_n g(z)}$} we can conclude that all the zeros of $[p\boxtimes_n q](-x)$ lie in $-D$, meaning that $p\boxtimes_n q\in \P_{n}(D)$. Similarly, since $p\in\P_{n}(\tt)$, then all the zeros of $p$ lie in the circular region $E:=\{z\in\cc\mid |z|\geq 1\}$. Using \cite[Theorem 16.1]{marden1966geometry} again, we obtain $p\boxtimes_n q\in \P_{n}(E)$.
Since~${D\cap E= \tt}$ we conclude that $p\boxtimes_n q\in \P_{n}(\tt)$.
\end{proof}

The connection between finite free probability and free probability is revealed in the asymptotic regime; this was first observed by Marcus \cite[Section~4]{marcus} and formalized later using finite free cumulants \cite{arizmendi2021finite,arizmendi2018cumulants}.

\begin{Theorem}[{\cite[Corollary~5.5]{arizmendi2018cumulants}, \cite[Theorem 1.4]{arizmendi2021finite}}]
 \label{thm:finiteAsymptotics}
 Let $\mfp:=\left(p_n\right)_{n=1}^{\infty}$ and $\mfq:=\left(q_n\right)_{n=1}^{\infty}$ be two converging sequences of polynomials in the sense of Definition~{\rm\ref{def:converging}}.
 \begin{enumerate}\itemsep=0pt
 \item[$(i)$] If $\mfp,\mfq \subset \P(\rr)$, then $\left(p_n \boxplus_n q_n\right)_{n=1}^{\infty}$ weakly converges to $\rho(\mfp) \boxplus \rho(\mfq)$.
 \item[$(ii)$] If $\mfp \subset \P(\rr)$ and $\mfq\subset \P(\rr_{>0})$ then $\left(p_n \boxtimes_n q_n\right)_{n=1}^{\infty}$ weakly converges to $\rho(\mfp) \boxtimes \rho(\mfq)$.
 \end{enumerate}
\end{Theorem}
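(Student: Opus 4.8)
The plan is to reduce both parts to the linearization property of \emph{finite free cumulants} and to the fact that these degenerate, as $n\to\infty$, into Voiculescu's free cumulants; this is the substance of \cite{marcus,arizmendi2018cumulants,arizmendi2021finite}, which I sketch. For the additive case, given $p\in\P_n$ set $\bar\sfe_k(p):=\sfe_k(p)/\falling{n}{k}$ and form $\phi_p(x):=\sum_{k=0}^n\bar\sfe_k(p)x^k$, which has constant term $1$ since $\sfe_0(p)=1$. Dividing the coefficient formula of Definition~\ref{def:FinFreeConv} by $\falling{n}{k}$ gives $\bar\sfe_k(p\boxplus_n q)=\sum_{i+j=k}\bar\sfe_i(p)\,\bar\sfe_j(q)$ for $0\le k\le n$, that is, $\phi_{p\boxplus_n q}\equiv\phi_p\,\phi_q\pmod{x^{n+1}}$. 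Taking formal logarithms, the coefficients of $\log\phi_p$ are therefore additive under $\boxplus_n$ in degrees $1,\dots,n$; rescaling the $k$-th one by an explicit $n$-dependent factor (of order $n^{k-1}$) yields the finite free cumulants $\freec_k^{(n)}(p)$, which satisfy $\freec_k^{(n)}(p\boxplus_n q)=\freec_k^{(n)}(p)+\freec_k^{(n)}(q)$. Because the $\sfe_k(p)$ and the moments of $\rho(p)$ determine one another, $\freec_k^{(n)}(p)$ depends only on $n$ and on $\int t^j\,{\rm d}\rho(p)(t)$ for $j\le k$, and the combinatorial heart of \cite{arizmendi2018cumulants} is that as $n\to\infty$ this dependence tends to the free moment--cumulant formula (the $1/n$ weights suppress every crossing partition in the limit); hence $\freec_k^{(n)}(p_n)\to\freec_k(\rho(\mfp))$ whenever the moments of $\rho(p_n)$ converge to those of $\rho(\mfp)$.

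With this in hand, suppose first that the roots of all the $p_n$ and $q_n$ lie in a common compact set. Then $\rho(\mfp),\rho(\mfq)$ are compactly supported, all their moments are attained in the limit, and by Theorem~\ref{thm:realrootedness}(1) the polynomials $p_n\boxplus_n q_n$ are real-rooted with uniformly bounded roots. Thus $\freec_k^{(n)}(p_n\boxplus_n q_n)=\freec_k^{(n)}(p_n)+\freec_k^{(n)}(q_n)\to\freec_k(\rho(\mfp))+\freec_k(\rho(\mfq))=\freec_k(\rho(\mfp)\boxplus\rho(\mfq))$ for every $k$, which is convergence of all moments; for probability measures supported in a common compact set this is equivalent to weak convergence, giving $(i)$ in this case. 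The general case of Definition~\ref{def:converging} then follows by approximating $\rho(\mfp),\rho(\mfq)$ by compactly supported measures, using weak continuity of $\boxplus$, and a diagonal argument --- or one simply invokes the statement in the stated generality from \cite{arizmendi2021finite}.

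Part $(ii)$ is parallel. First, Theorem~\ref{thm:realrootedness}(2) and Remark~\ref{rem:ruleofsigns} guarantee $p_n\boxtimes_n q_n\in\P_n(\rr)$, so the transforms below are defined. In place of the logarithm one uses Marcus's finite $S$-transform \cite{marcus} (equivalently, finite free multiplicative cumulants): the normalized coefficients $\sfe_k(p)/\binom{n}{k}$ are multiplicative under $\boxtimes_n$ by Definition~\ref{def:FinFreeConv}, which makes the finite $S$-transform multiplicative under $\boxtimes_n$, and by the same kind of moment computation it converges, as $n\to\infty$, to the $S$-transform $S_{\rho(\mfp)}$ (resp.\ $S_{\rho(\mfq)}$), using $\mfq\subset\P(\rr_{>0})$ to keep the relevant factor positive-rooted. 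Passing multiplicativity to the limit shows the limiting root distribution of $p_n\boxtimes_n q_n$ has $S$-transform $S_{\rho(\mfp)}S_{\rho(\mfq)}$, i.e.\ equals $\rho(\mfp)\boxtimes\rho(\mfq)$; the reduction from the compact case to the general one is handled as before.

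The main obstacle is the asymptotic degeneration of the finite free cumulants (additive and multiplicative) into the free ones: one must track the $n$-dependent weights in the moment--cumulant expansion and check that only non-crossing partitions contribute in the limit. A secondary, purely analytic point is that ``all moments converge'' upgrades to ``converges weakly'' only under a uniform bound on the supports, so reaching the full generality of Definition~\ref{def:converging} requires the truncation and continuity argument indicated above.
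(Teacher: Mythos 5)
This theorem is not proved in the paper at all: it is imported verbatim from \cite[Corollary~5.5]{arizmendi2018cumulants} and \cite[Theorem~1.4]{arizmendi2021finite}, so there is no in-paper argument to compare against. Your sketch is a faithful reconstruction of the strategy of those references: the identity $\bar\sfe_k(p\boxplus_n q)=\sum_{i+j=k}\bar\sfe_i(p)\bar\sfe_j(q)$ with $\bar\sfe_k=\sfe_k/\falling{n}{k}$ is correct, the logarithmic linearization and the $n^{k-1}$ normalization do produce the finite free cumulants of \cite{arizmendi2018cumulants}, and the asymptotic degeneration of the moment--cumulant weights onto non-crossing partitions is exactly the combinatorial core of the cited proofs. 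Two places where the sketch is thinner than a proof deserve flagging. First, the upgrade from ``all moments converge'' to weak convergence genuinely requires more than you supply outside the uniformly-bounded-roots case: Definition~\ref{def:converging} only asks for weak convergence of $\rho(p_n)$, and your proposed truncation needs a quantitative stability statement for $\boxplus_n$ under perturbation of roots (such a statement exists, but it is a real ingredient, not a routine diagonal argument); invoking the references for this is legitimate but should be said as the actual resolution rather than an alternative. Second, in part~$(ii)$ the finite and limiting $S$-transforms are only defined when the first moment is nonzero, and $\rho(\mfp)$ may well be centered since $\mfp\subset\P(\rr)$; the references handle this by working at the level of moments and mixed cumulant/genus expansions rather than through $S_{\rho(\mfp)}$, and your parenthetical ``equivalently, finite free multiplicative cumulants'' is the version of the argument that actually closes. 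With those two caveats made precise, the proposal is a correct account of the standard proof.
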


\subsection{Hypergeometric polynomials and examples}
\label{subsec:HGpolynomials}

A large class of polynomials with real roots is contained in the class of hypergeometric polynomials; these were recently studied in connection with finite free probability in \cite{mfmp2024hypergeometric}. This class contains several important families, such as Bessel, Laguerre and Jacobi polynomials. These polynomials -- and their specializations such as Hermite polynomials -- constitute a rich class of examples in the theory of finite free probability.

\begin{Definition}
 \label{def:H.polynomials}
 For $i,j \in \zz_{\geq 0}$ and $n \geq 1$, let
 \begin{equation} \label{eq:condition.hypergeometric}
 a_1, \dots, a_i \in \rr \setminus \left\{ 0,\frac{1}{n},\frac{2}{n}, \dots, \frac{n-1}{n}\right\} \qquad \text{and}\qquad b_1,\dots,b_j \in \rr.
 \end{equation}
 Define the polynomial \smash{$\mathcal{H}_{n} \bigl[\begin{smallmatrix} b_1,\dots,b_j \\ a_1,\dots,a_i\end{smallmatrix}\bigr]$} to be the unique monic polynomial of degree $n$ with coefficients in representation \eqref{monicP} given by
 \[ \sfe_k \left( \HGP{n}{b_1,\dots,b_j}{a_1,\dots,a_i} \right) := \binom{n}{k} \frac{\falling{nb_1}{k} \cdots \falling{nb_j}{k}}{\falling{na_1}{k}\cdots \falling{na_i}{k}} \]
 for $1 \leq k \leq n$.

 To simplify notation, for a tuple $\bm a =(a_1, \dots, a_i)$ and constants $n, m\in\R$, we will write
 \begin{align*}
\falling{\bm a}{k} := \prod_{s=1}^i \falling{a_s}{k}\qquad \text{and}\qquad
n \bm a +m := (na_1+m, \dots, na_i+m).
\end{align*}
 Then, for tuples $\bm a = (a_1,\dots,a_i)$ and $\bm b = (b_1,\dots,b_j)$ satisfying equation \eqref{eq:condition.hypergeometric}, the hypergeometric polynomial \smash{$\mathcal{H}_{n}\bigl[\begin{smallmatrix} \bm b\\ \bm a \end{smallmatrix}\bigr]$} has coefficients given by
 \[ \sfe_k\bigl( \HGP{n}{\bm b}{\bm a} \bigr) = \binom{n}{k} \frac{\falling{\bm b n}{k}}{\falling{\bm a n}{k}}= \binom{n}{k} \frac{\prod_{t=1}^j \falling{nb_t}{k}}{\prod_{s=1}^i \falling{na_s}{k}}\]
 for $1 \leq k \leq n$.
\end{Definition}

\begin{Remark}
 The reason we call \smash{$\mathcal{H}_{n}\bigl[\begin{smallmatrix} \bm b\\ \bm a \end{smallmatrix}\bigr]$} ``hypergeometric'' is that it can be identified as a~terminating \emph{generalized hypergeometric series} \cite{MR2656096, MR2723248}
 \[
 \HGP{n}{\bm b}{\bm a}(x) = \frac{(-1)^n \falling{\bm b n}{n}}{\falling{\bm a n}{n}} \HGF{i+1}{j}{-n,\bm a n -n+1}{\bm b n -n+1}{x},
 \]
 where $\bm c n -n+1$ means that we multiply each entry of $\bm c$ by $n$ and then add $-n+1$, and we use the standard notation ${}_iF_j$ for a generalized hypergeometric series. Namely, for tuples~${\bm a = (a_1,\dots,a_i) \in \rr^i}$ and $\bm b = (b_1,\dots,b_j) \in \rr^j$, we write
 \[
 \HGF{i}{j}{\bm a}{\bm b}{x} := \sum_{k=0}^{\infty} \frac{\raising{\bm a}{k}}{\raising{\bm b}{k}} \frac{x^k}{k!}.
 \]
\end{Remark}

In \cite{mfmp2024hypergeometric}, it was noticed that these hypergeometric polynomials behave well with respect to the finite free convolutions from Definition~\ref{def:FinFreeConv}.

\begin{Theorem}[{\cite[equations (82)--(84)]{mfmp2024hypergeometric}}]
 \label{thm:MFMP}
 Consider tuples $\bm a_1$, $\bm a_2$, $\bm a_3$, $\bm b_1$, $\bm b_2$, and $\bm b_3$ with lengths $i_1$, $i_2$, $i_3$, $j_1$, $j_2$, and $j_3$, respectively. Then
 \begin{enumerate}\itemsep=0pt
 \item[$(1)$] the multiplicative convolution of hypergeometric polynomials is another hypergeometric polynomial
 \[ 
\HGP{n}{\bm b_1}{\bm a_1 } \boxtimes_n \HGP{n}{\bm b_2}{\bm a_2 }=\HGP{n}{\bm b_1, \bm b_2 }{\bm a_1, \bm a_2} ;
 \]
 \item[$(2)$] if we have the factorization
 \[ 
\HGF{j_1}{i_1}{-n\bm b_1}{-n\bm a_1}{x} \HGF{j_2}{i_2}{-n\bm b_2}{-n\bm a_2}{ x}=\HGF{j_3}{i_3}{-n\bm b_3}{-n\bm a_3}{x}
 \]
 and write $s_l=(-1)^{i_l+j_l+1}$ for $l=1,2,3$, then it holds that
 \[
\HGP{n}{\bm b_1}{\bm a_1 }(s_1 x) \boxplus_n \HGP{n}{\bm b_2}{\bm a_2 }(s_2 x) = \HGP{n}{\bm b_3}{\bm a_3 }(s_3 x).
 \]
 \end{enumerate}
\end{Theorem}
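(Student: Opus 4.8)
\textbf{Part (1)} I would simply chase the coefficients $\sfe_k$. By Definition~\ref{def:FinFreeConv}, for $1 \le k \le n$,
\[
\sfe_k\bigl(\HGP{n}{\bm b_1}{\bm a_1}\boxtimes_n\HGP{n}{\bm b_2}{\bm a_2}\bigr)
= \frac{1}{\binom{n}{k}}\,\sfe_k\bigl(\HGP{n}{\bm b_1}{\bm a_1}\bigr)\,\sfe_k\bigl(\HGP{n}{\bm b_2}{\bm a_2}\bigr)
= \binom{n}{k}\,\frac{\falling{\bm b_1 n}{k}\,\falling{\bm b_2 n}{k}}{\falling{\bm a_1 n}{k}\,\falling{\bm a_2 n}{k}},
\]
and since $\falling{(\bm b_1,\bm b_2)n}{k}=\falling{\bm b_1 n}{k}\falling{\bm b_2 n}{k}$ (and likewise for the $\bm a$'s), the right-hand side is exactly $\sfe_k\bigl(\HGP{n}{\bm b_1,\bm b_2}{\bm a_1,\bm a_2}\bigr)$; one also notes that the concatenation $(\bm a_1,\bm a_2)$ inherits condition~\eqref{eq:condition.hypergeometric}, so the target polynomial is legitimate. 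That settles (1).

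\textbf{Part (2)} I would work through the differential-operator description of $\boxplus_n$ from Remark~\ref{rem:additive.conv.diff.form}. The crucial step is an intrinsic formula for $\HGP{n}{\bm b}{\bm a}(sx)$ when $s=(-1)^{i+j+1}$, with $i,j$ the lengths of $\bm a,\bm b$: I claim
\[
\HGP{n}{\bm b}{\bm a}(sx) = s^n\,\HGF{j}{i}{-n\bm b}{-n\bm a}{\tfrac{\partial}{\partial x}}\,x^n,
\]
where the hypergeometric series is applied term by term to $x^n$, so only its truncation to degree $\le n$ matters. To establish this I would start from $\HGP{n}{\bm b}{\bm a}(x)=P\bigl(\tfrac{\partial}{\partial x}\bigr)x^n$ with $P\bigl(\tfrac{\partial}{\partial x}\bigr)=\sum_{k=0}^n(-1)^k\tfrac{\sfe_k}{\falling{n}{k}}\bigl(\tfrac{\partial}{\partial x}\bigr)^k$ and $\sfe_k=\binom{n}{k}\falling{\bm b n}{k}/\falling{\bm a n}{k}$; dilating the argument by $s$ scales the coefficient of $\bigl(\tfrac{\partial}{\partial x}\bigr)^k$ by $s^{n-k}$. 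Now use $\binom{n}{k}/\falling{n}{k}=1/k!$ together with $\falling{c}{k}=(-1)^k\raising{-c}{k}$, so that $\falling{\bm b n}{k}/\falling{\bm a n}{k}=(-1)^{(j-i)k}\raising{-n\bm b}{k}/\raising{-n\bm a}{k}$. The sign carried by the $k$-th term becomes $(-1)^k(-1)^{(j-i)k}s^{n-k}=s^n$, using $(-1)^{i+j+1}=s$ and $s^{-k}=s^k$; what is left is precisely $s^n\,\raising{-n\bm b}{k}\big/\bigl(k!\,\raising{-n\bm a}{k}\bigr)$, as claimed. (The whole point of the sign $s=(-1)^{i+j+1}$ is to make this per-degree cancellation happen.)

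Granting this lemma, (2) falls out: applying the formula with $l=1,2$ and combining via~\eqref{thirdDefAdditiveConv} (which turns $\boxplus_n$ into composition of these operators), together with the hypothesis — an identity of formal power series — that the product of the two hypergeometric series equals $\HGF{j_3}{i_3}{-n\bm b_3}{-n\bm a_3}{x}$, gives
\[
\HGP{n}{\bm b_1}{\bm a_1}(s_1x)\boxplus_n\HGP{n}{\bm b_2}{\bm a_2}(s_2x)
= s_1^n s_2^n\,\HGF{j_3}{i_3}{-n\bm b_3}{-n\bm a_3}{\tfrac{\partial}{\partial x}}\,x^n
= s_1^n s_2^n s_3^{\,n}\,\HGP{n}{\bm b_3}{\bm a_3}(s_3x),
\]
the last step being the lemma once more (with $s_3^{-n}=s_3^{\,n}$). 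Thus the argument reduces the asserted identity to the scalar relation $s_1^n s_2^n=s_3^{\,n}$, i.e.\ to the parity condition that $n\bigl(i_1+j_1+i_2+j_2+i_3+j_3+1\bigr)$ be even. I expect this to be the only genuinely delicate point (along with keeping the per-degree signs straight): it holds automatically for the ``same-argument'' hypergeometric factorizations that actually occur — where the relevant differences $j_l-i_l$ all agree — and in any case it holds whenever $n$ is even, which is the regime of interest for even polynomials. Everything else is routine bookkeeping with rising and falling factorials, modulo verifying that the resulting tuples still obey~\eqref{eq:condition.hypergeometric}.
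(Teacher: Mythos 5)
Your argument is correct, and it is essentially the route the paper itself takes: Theorem~\ref{thm:MFMP} is imported from \cite{mfmp2024hypergeometric} without proof here, but the paper proves the generalization (Theorem~\ref{thm:add.hyper.general}, via Lemma~\ref{lem:differential.powers} and equation~\eqref{thirdDefAdditiveConv}), and your key lemma $\HGP{n}{\bm b}{\bm a}(sx)=s^n\,\HGF{j}{i}{-n\bm b}{-n\bm a}{\partial/\partial x}\,x^n$ is exactly the $l=1$, $c=1$ case of Corollary~\ref{cor:differential.form}; your part~(1) is the inevitable coefficient check. Your sign bookkeeping in the lemma is right: the per-degree factor $(-1)^k s^{-k}(-1)^{(j-i)k}=((-1)^{1+j-i}(-1)^{i+j+1})^k=1$, so the identity holds with the global prefactor $s^n$.

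The scalar condition $s_1^n s_2^n s_3^n=1$ that you isolate is a genuine feature of the statement as quoted, not a defect of your argument: comparing leading coefficients shows it is necessary for the displayed identity, and it is satisfied in every same-argument factorization used in this paper (for equations of the shape \eqref{eq.Gri.18}--\eqref{eq.Gri.11} with all arguments equal to $x$ one has $\sum_l(i_l+j_l)$ odd, so $s_1s_2s_3=1$), and in any case whenever $n=2m$ is even, which is the regime of Section~\ref{sec:EvenHyper}. The paper's own general version sidesteps the issue by building the prefactors $\bigl((-1)^{i_k+j_k+1}l_k^{l_k}c_k\bigr)^{n/l_k}$ into the polynomials $p_k$, so that $p_1\boxplus_n p_2=p_3$ holds with no parity hypothesis; your version without prefactors correctly records where that hypothesis is used.
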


The limiting distribution of hypergeometric polynomials can be expressed concretely in terms of the $S$-transform.

\begin{Theorem}[{\cite[Theorem 3.9]{martinez2025zeros}, \cite[Corollary 10.8]{arizmendi2024s}}]
 \label{thm:HypLimit}
 For integers $i,j\geq 0$, consider tuples $\bm A=(A_1,\dots, A_i) \in (\rr\setminus [0,1))^i$ and $\bm B=(B_1,\dots, B_j)\in (\rr\setminus \{0\})^j$. Assume that $\mfp=(p_n)_{n\geq 0}$ is a sequence of polynomials such that
 \[
 p_n=\dil{n^{i-j}}\HGP{n}{\bm b_n}{\bm a_n}\in\pols(\rr_{> 0}),
 \]
 where the tuples of parameters $\bm a_n \in \rr^i$ and $\bm b_n\in \rr^j$ have a limit given by
$ \lim_{n\to\infty} \bm a_n=\bm A$, and $\lim_{n\to\infty} \bm b_n=\bm B$.
 Then $\mfp$ is a converging sequence in the sense of Definition~{\rm\ref{def:converging}}. Moreover, $\rho(\mfp)\in \MM(\rr_{\geq 0})$ has $S$-transform given by
 \[
 S_{\rho(\mfp)}(z)= \frac{\prod_{r=1}^i (z+A_r) }{\prod_{s=1}^j(z+B_s)}.
 \]
\end{Theorem}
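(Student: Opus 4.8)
The plan is to reduce the general hypergeometric polynomial to a $\boxtimes_n$-product of ``one-parameter'' hypergeometric polynomials and then pass to the limit, using the asymptotic multiplicativity of $\boxtimes_n$ (Theorem~\ref{thm:finiteAsymptotics}) together with the multiplicativity of the $S$-transform under $\boxtimes$. By Theorem~\ref{thm:MFMP}(1),
\[
\HGP{n}{\bm b_n}{\bm a_n}=\HGP{n}{b_{n,1}}{}\boxtimes_n\cdots\boxtimes_n\HGP{n}{b_{n,j}}{}\boxtimes_n\HGP{n}{}{a_{n,1}}\boxtimes_n\cdots\boxtimes_n\HGP{n}{}{a_{n,i}}.
\]
To carry the dilation $\dil{n^{i-j}}$ through this product I would use that $\dil{\alpha\beta}=\dil{\alpha}\circ\dil{\beta}$ and $\dil{\alpha}(f\boxtimes_n g)=(\dil{\alpha}f)\boxtimes_n g$, both immediate from $\dil{\alpha}p=p\boxtimes_n(x-\alpha)^n$ and the associativity and commutativity of $\boxtimes_n$ (Remark~\ref{rem.convolution.basic.facts}). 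Since $\dil{n^{i-j}}=(\dil{n})^{i}\circ(\dil{n^{-1}})^{j}$, I route one factor $\dil{n}$ onto each $\HGP{n}{}{a_{n,s}}$ and one factor $\dil{n^{-1}}$ onto each $\HGP{n}{b_{n,t}}{}$, so $p_n$ becomes the $\boxtimes_n$-product of the blocks $\dil{n}\,\HGP{n}{}{a_{n,s}}$ ($s=1,\dots,i$) and $\dil{n^{-1}}\,\HGP{n}{b_{n,t}}{}$ ($t=1,\dots,j$). When $i=j=0$ there are no blocks, $p_n=(x-1)^n$, and $\rho(\mfp)=\delta_1$ with $S$-transform $1$, matching the empty product.

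Next I would treat the two kinds of blocks. From Definition~\ref{def:H.polynomials} one reads off $\sfe_k\bigl(\dil{n}\,\HGP{n}{}{a_n}\bigr)=n^{k}\binom{n}{k}/\falling{na_n}{k}$ and $\sfe_k\bigl(\dil{n^{-1}}\,\HGP{n}{b_n}{}\bigr)=n^{-k}\binom{n}{k}\falling{nb_n}{k}$; writing $\falling{nc}{k}=n^{k}\prod_{l=0}^{k-1}(c-l/n)$, one sees that the rescaled coefficients, and their lower-order-in-$n$ corrections, have limits depending only on $a_n\to A$, resp.\ $b_n\to B$. The finite-free-cumulant machinery underlying Theorem~\ref{thm:finiteAsymptotics} then yields convergence of each block sequence, and a short moment computation identifies the limits: for $\dil{n}\,\HGP{n}{}{a_n}$ one gets $m_1\to 1/A$ and $m_2\to(A-1)/A^{3}$, which are exactly the first two moments of the measure with $S$-transform $z+A$ (since $S(z)=z+A$ forces $M^{\langle-1\rangle}(z)=z(z+A)/(z+1)$, whose inversion reproduces them); for $\dil{n^{-1}}\,\HGP{n}{b_n}{}$ one gets $m_1\to B$ and $m_2\to B^{2}+B$, matching the Marchenko--Pastur-type law with $S$-transform $1/(z+B)$. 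The conditions $A\notin[0,1)$ and $B\neq 0$ are precisely what make $\prod_r(z+A_r)/\prod_s(z+B_s)$ a genuine $S$-transform and keep the relevant blocks real-rooted.

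Finally I would reassemble: applying Theorem~\ref{thm:finiteAsymptotics}(ii) successively to the $\boxtimes_n$-factorization of $p_n$ shows that $\mfp$ converges and that $\rho(\mfp)$ is the $\boxtimes$-product of the block limits, so by multiplicativity of the $S$-transform $S_{\rho(\mfp)}(z)=\prod_{s=1}^{i}(z+A_s)\big/\prod_{t=1}^{j}(z+B_t)$. The main obstacle is that Theorem~\ref{thm:finiteAsymptotics}(ii) needs at least one factor at each stage to be positive-real-rooted, which the individual blocks need not be for all admissible parameters. I would circumvent this by running the whole argument at the level of moments: the $\boxtimes_n$ coefficient formula and the factorization above are purely algebraic, so the moments of $\rho(p_n)$ are explicit rational functions of $\bm a_n,\bm b_n$ that converge irrespective of real-rootedness; then the standing hypothesis $p_n\in\pols(\rr_{>0})$ together with convergence of $m_2$ gives tightness of $(\rho(p_n))$, and moment-determinacy (the candidate limit is compactly supported, being a $\boxtimes$-product of compactly supported measures) pins down the weak limit as the measure with the stated $S$-transform. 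Alternatively, one can sidestep the obstacle entirely by computing a finite free $S$-transform of $\HGP{n}{\bm b_n}{\bm a_n}$ directly and letting $n\to\infty$.
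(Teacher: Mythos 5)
First, a point of order: the paper does not prove Theorem~\ref{thm:HypLimit} at all --- it is imported verbatim from \cite[Theorem 3.9]{martinez2025zeros} and \cite[Corollary 10.8]{arizmendi2024s} --- so there is no in-paper proof to compare against. Your overall strategy (factor \smash{$\HGP{n}{\bm b_n}{\bm a_n}$} into one-parameter Bessel and Laguerre blocks via Theorem~\ref{thm:MFMP}(1), distribute the dilation, and pass to the limit multiplicatively) is natural and close in spirit to the finite-$S$-transform and cumulant arguments used in those references; the bookkeeping with $\dil{\alpha}p = p \boxtimes_n (x-\alpha)^n$ and the computation of $m_1,m_2$ for the two block types are correct.

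However, as written the argument has two genuine gaps, and they sit exactly where the content of the theorem lies. First, you identify the limit of each block by matching only $m_1$ and $m_2$ against the measure with $S$-transform $z+A$ (resp.\ $1/(z+B)$); two moments do not determine a measure, so ``a short moment computation identifies the limits'' is an assertion, not a proof --- you would need all moments, or an actual transform computation. Second, and more seriously, the step you correctly flag as the obstacle is never overcome: the individual blocks \smash{$\dil{n}\HGP{n}{\cdot}{a_{n,s}}$} need not be real-rooted for $A_s\geq 1$ (the paper only guarantees real or negative roots for $a\in(0,\tfrac1n)$ or $a<0$, see Example~\ref{exa:BesselRMP}), so Theorem~\ref{thm:finiteAsymptotics}(ii) cannot be iterated over the factorization, and the hypothesis $p_n\in\pols(\rr_{>0})$ applies only to the full product, not to the factors. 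Your proposed repair --- that the moments of $\rho(p_n)$ are explicit rational functions of $\bm a_n,\bm b_n$ which converge to the moments of the $\boxtimes$-product of the block limits --- is precisely the statement that the finite free cumulants of $\boxtimes_n$ converge to free cumulants of $\boxtimes$ as formal moment sequences, independently of real-rootedness. That is a theorem (essentially the combinatorial core of \cite{arizmendi2021finite}), but it is not quoted anywhere in the paper in that form and you do not prove it; invoking it by name, or replacing the whole block decomposition by a direct computation with a finite $S$-transform as in \cite{arizmendi2024s}, is what would be needed to close the argument. The final tightness-plus-determinacy step is fine once all moments are shown to converge.
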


Recall that since $\rho(\mfp)\in\MM(\rr_{\geq 0})$, the measure is determined by the $S$-transform. In view of the previous result, we will use the following notation.

\begin{Notation}
 \label{nota:rhoparamseven}
 If $\mu\in \MM(\rr_{\geq 0})$ is a measure with $S$-transform of the form
 \[ S_{\mu}(z)= \frac{\prod_{r=1}^i (z+a_r) }{\prod_{s=1}^j(z+b_s)} , \]
 for some parameters $a_1,\dots, a_i$ and $b_1,\dots,b_j$, then we say that $\mu$ is an \emph{$S$-rational measure} and denote it by \smash{$\rho \bigl[\begin{smallmatrix} b_1,\dots, b_j\\ a_1,\dots, a_i\end{smallmatrix}\bigr]$}.

 Moreover, we denote the square root of an $S$-rational measure by
 \[ \SRMS{b_1,\dots, b_j}{a_1,\dots, a_i} := S \left( \SRM{b_1,\dots, b_j}{a_1,\dots, a_i} \right), \]
 where $S$ is the bijection from Notation~\ref{nota:Q.measures}.
\end{Notation}

Notice that, if $a_r=b_s$ for some $r\leq i$ and $s\leq j$, then
\[
 \SRM{b_1,\dots, b_j}{a_1,\dots, a_i}=\SRM{b_1,\cdots,b_{s-1},b_{s+1},\cdots, b_j}{a_1,\cdots,a_{r-1},a_{r+1},\cdots, a_i}.
 \]

To finish this section, we survey some particular cases of Theorem~\ref{thm:HypLimit}. Including how Laguerre tends to Marchenko--Pastur, Bessel tends to reversed Marchenko--Pastur, and Jacobi tends to Free beta.

\begin{Example}[identities]
 The simplest cases of Definition~\ref{def:H.polynomials} are the following:
 \[ \HGP{n}{\bm a}{\bm a}(x) = \HGP{n}{\cdot}{\cdot}(x) = (x-1)^n \qquad \text{and}\qquad \HGP{n}{0}{\bm a}(x) = x^n. \]
 These polynomials are the identities for the operations $\boxtimes_n$ and $\boxplus_n$, respectively.
\end{Example}

\begin{Example}[Laguerre and Hermite polynomials]
 \label{exa:LaguerreHermite}
 The analogues of the semicircular and free Poisson distributions in finite free probability are the Hermite and Laguerre polynomials respectively. These appear in finite free analogues of the central limit theorem and Poisson limit theorem \cite{marcus}.

 The Laguerre polynomials of interest in this paper can be written in terms of the hypergeometric polynomials \smash{$\mathcal{H}_{n}\bigl[\begin{smallmatrix} b\\ \cdot\end{smallmatrix}\bigr]$}. There are some ranges of the parameter $b$ which yield real-rooted polynomials:
 \begin{itemize}\itemsep=0pt
 \item if $b \in \big\{\frac{1}{n},\dots,\frac{n-1}{n} \big\}$, then \smash{$\mathcal{H}_{n}\bigl[\begin{smallmatrix} b\\ \cdot\end{smallmatrix}\bigr] \in \P(\rr_{\geq 0})$}, and $0$ is a root with multiplicity $(1-b)n$;
 \item if $b \in \bigl(\frac{n-2}{n},\frac{n-1}{n}\bigr)$, then \smash{$\mathcal{H}_{n}\bigl[\begin{smallmatrix} b\\ \cdot\end{smallmatrix}\bigr] \in \P(\rr)$};
 \item if $b > 1-\frac{1}{n}$, then \smash{$\mathcal{H}_{n}\bigl[\begin{smallmatrix} b\\ \cdot\end{smallmatrix}\bigr] \in \P(\rr_{> 0})$}.
 \end{itemize}
 See \cite[Table 1, (56)]{mfmp2024hypergeometric} for more details.

 We will be particularly interested in the following scaled version of a Laguerre polynomial, to which we give a special name: for $\lambda \geq 1$, write
\smash{$L_n^{(\lambda)} = \dil{\frac{1}{n}}
\mathcal{H}_{n}\bigl[\begin{smallmatrix} \lambda\\ \cdot\end{smallmatrix}\bigr]$}.
 This polynomial can be understood as the finite analogue of the free Poisson (also known as Marchenko--Pastur) distribution in free probability. Indeed, it appears as the limiting polynomial in the finite free Poisson limit theorem \cite{marcus} and it converges in empirical root distribution to the free Poisson distribution with rate $\lambda$:
 \[ \rho\bigl(L_n^{(\lambda)}\bigr) \to \frac{1}{2\pi} \frac{\sqrt{(r_+-x)(x-r_-)}}{x} \mathbf{1}_{[r_-,r_+]}(x) {\rm d}x \]
 weakly as $n \to \infty$, where $r_{\pm} := \lambda + 1 \pm 2\sqrt{\lambda}$, and $\mathbf{1}_{[a,b]}$ stands for the indicator function $\mathbf{1}_{[a,b]}(x)=1$ if $x\in[a,b]$ and 0 otherwise. This is a classical result; see \cite[Section 5.3]{mfmp2024hypergeometric} and its references.

 The Hermite polynomials used in this paper are defined as follows:
 \[ H_{2m}(x) := m^{-m} \HGP{m}{1-\frac{1}{2m}}{\cdot}\bigl(mx^2\bigr) = \dil{\sqrt{\frac{1}{m}}} \HGPS{m}{1-\frac{1}{2m}}{\cdot}. \]
 This polynomial is known to be the appropriate finite analogue of the semicircular distribution in free probability. It appears as the limiting polynomial in the finite free central limit theorem~\cite{marcus} and it converges in empirical root distribution to the semicircular distribution with radius~$2$:
 \[ \rho(H_{2m}) \to \frac{1}{2\pi} \sqrt{4-x^2} \mathbf{1}_{[-2,2]}(x) {\rm d}x \]
 weakly as $n \to \infty$. This is also a well-known classical result, see, e.g., \cite{KM16}.

 The resemblance between the definitions of $H$ and $L^{(\lambda)}$ is meaningful, and will be elaborated in Example \ref{exa:HermiteSquared}.
\end{Example}

\begin{Example}[Bessel polynomials]
 \label{exa:BesselRMP}
 The Bessel polynomials of interest in this paper can be written in terms of the hypergeometric polynomials \smash{$\mathcal{H}_{n}\bigl[\begin{smallmatrix} \cdot\\ a\end{smallmatrix}\bigr]$}. Some known results on their roots are the following:
 \begin{itemize}\itemsep=0pt
 \item if $a\in \bigl(0,\frac{1}{n}\bigr)$, then \smash{$\mathcal{H}_{n}\bigl[\begin{smallmatrix} \cdot\\ a\end{smallmatrix}\bigr]\in \P(\rr)$};
 \item if $a<0$, then \smash{$\mathcal{H}_{n}\bigl[\begin{smallmatrix} \cdot\\ a\end{smallmatrix}\bigr]\in \P(\rr_{< 0})$}.
 \end{itemize}
 For $a < 0$, denote
\smash{$ C_n^{(a)} := \dil{-n} \mathcal{H}_{n}\bigl[\begin{smallmatrix} \cdot\\ a\end{smallmatrix}\bigr] \in \P(\rr_{< 0})$}.
 The asymptotics of these Bessel polynomials are also known:
 \[ \rho\bigl(C_n^{(a)}\bigr) \to \frac{-a}{2\pi} \frac{\sqrt{(r_+-x)(x-r_-)}}{x^2} \mathbf{1}_{[r_-,r_+]}(x) {\rm d}x \]
 weakly as $n \to \infty$, where \smash{$r_{\pm} := -\frac{1}{a-2\pm 2\sqrt{1-a}}$}. See \cite[Section~5.3]{mfmp2024hypergeometric}.
\end{Example}

\begin{Example}[Jacobi polynomials]
 \label{exa:Jacobi}
 The Jacobi polynomials of interest in this paper can be written in terms of the hypergeometric polynomials \smash{$\mathcal{H}_{n}\bigl[\begin{smallmatrix} b\\ a\end{smallmatrix}\bigr]$}. Let us recall some simple combinations of parameters which produce real-rooted Jacobi polynomials:
 \begin{itemize}\itemsep=0pt
 \item \smash{$\mathcal{H}_{n}\bigl[\begin{smallmatrix} b\\ a\end{smallmatrix}\bigr]\in \P_n([0,1])$} when $b >1-\frac{1}{n}$ and $a>b+1-\frac{1}{n}$;
 \item \smash{$\mathcal{H}_{n}\bigl[\begin{smallmatrix} b\\ a\end{smallmatrix}\bigr]\in \P_n(\rr_{<0})$} when $b>1-\frac{1}{n}$ and $a<0$;
 \item \smash{$\mathcal{H}_{n}\bigl[\begin{smallmatrix} b\\ a\end{smallmatrix}\bigr]\in \P_n(\rr_{>0})$} when $a<0$ and $b<a-1+\frac{1}{n}$.
 \end{itemize}
 See \cite{mfmp2024hypergeometric} for further information. A more particular case that we will use in this paper is the polynomial on the left-hand side of the following factorization (\cite[equation~(1.7.1)]{MR2656096}, \cite[equation~(30)]{mfmp2024hypergeometric}):
 \begin{equation} \label{eq:zeroson1}
 \HGP{n}{b}{b+\frac{r}{n}} = (x-1)^{n-r} \HGP{r}{\frac{n}{r} (b-1)+1}{\frac{n}{r} b + 1}
 \end{equation}
 for $0 \leq r \leq n$. For $b > 1-\frac{r}{n^2}$, by the first bullet point above, the hypergeometric polynomial on the right-hand side of equation \eqref{eq:zeroson1} is in $\P_n([0,1])$, so the polynomial on the left-hand side is in $\P_n([0,1])$ as well.

 Similarly, one can pick parameters to produce Jacobi polynomials with roots at only $0$ and~$1$
 \[
 R_n^{(r)}(x) := \HGP{n}{r/n}{1}(x) =(x-1)^r x^{n-r} \text{ for } 0 \leq k \leq n.
 \]
 This is the characteristic polynomial of an orthogonal projection on $\cc^n$ with rank $r$. As such, it plays a special role in relation to free probability, where projections are an important type of noncommutative random variable.

 Some asymptotic results concerning Jacobi polynomials can be found in \cite{mfmp2024hypergeometric} and its references.
\end{Example}

\section{Even polynomials in finite free probability}
\label{sec:evenpol}

In this section, we will study in detail the basic properties of even polynomials, with special emphasis on their behavior under finite free convolutions.

\begin{Definition} \label{def:EvenPoly}
 We say that a polynomial $p \in \P_n$ is \emph{even} if one of the following equivalent statements holds:
 \begin{enumerate}\itemsep=0pt
 \item[(1)] for every root $\alpha$ of $p$, there is a root $\beta$ of $p$ with $\beta = -\alpha$ and $\mult(\beta) = \mult(\alpha)$;
 \item[(2)] $p$ is an even function if $n$ is even, or $p$ is an odd function if $n$ is odd;
 \item[(3)] $\sfe_k(p) = 0$ for all odd $0 \leq k \leq n$;
 \item[(4)] $p =\dil{-1} (p)$.
 \end{enumerate}
 We will denote by $\P_{2m}^E$ the set of all even polynomials of even degree $2m$.
\end{Definition}

The equivalence between (1), (2) and (3) above is well known. The equivalence of (3) and~(4) follows from noticing that $\dil{-1}$ simply changes the sign of the roots.

\begin{Remark}[even polynomials of odd degree]\label{rem:odd.degree}
To simplify the presentation, throughout this paper we will focus on studying even polynomials of even degree. However, the reader should keep in mind that the case of odd degree is completely analogous to the case of even degree, except that we need to add a root at $0$. Indeed, notice that if $p$ is an even polynomial of odd degree, then $p$ must have a root at $0$, implying that it is of the form $p(x)=xq(x)$ where $q$ is an even polynomial of even degree.
\end{Remark}

\begin{Remark}\label{rem:convolutions.even}
From Definition~\ref{def:FinFreeConv}, it is immediate that the set $\P_{2m}^E$ is closed under $\boxplus_{2m}$ and~$\boxtimes_{2m}$. Furthermore, $\P_{2m}^E$ is absorbing with respect to multiplicative convolution. These claims can be nicely summarized as follows
$\P_{2m}^E \boxplus_{2m} \P^E_{2m}\subset \P_{2m}^E$ and $
\P_{2m}^E \boxtimes_{2m} \P_{2m}\subset \P_{2m}^E$.
\end{Remark}

\subsection{Degree doubling operation}

A very simple way to construct even polynomials is by squaring the dummy variable. This is a~very natural operation, and has appeared in the context of finite free probability \cite{gribinski2022rectangular,MSS15}. Since we will extensively use this operation and its inverse, we will fix some notation.

\begin{Notation}[degree doubling operation]
 \label{nota:DegreeDoubling}
 Define $\lift_{m} \colon \P_m \to \P_{2m}^E$ by
$\lift_{m}(p) = p\bigl(x^2\bigr) $
 for $p \in \P_m$.
\end{Notation}

\begin{Notation}[even and odd parts] \label{nota:DegreeHalving}
 Define $\halfeven_{m} \colon \P_{2m} \to \P_m$ as follows: for $p \in \P_{2m}$, define~$\halfeven_{m}(p)$ by the coefficients
 \begin{equation} \label{eq:coeff.even.pol}
 \sfe_k(\halfeven_{m}(p)) = (-1)^k \sfe_{2k}(p).
 \end{equation}
\end{Notation}

Notice that the operations $\lift_{m}$ and $\halfeven_{m}$ are linear, and for $p \in \P_m$, the roots of $\lift_{m}(p)$ are
\[
 -\sqrt{\lambda_1(p)},\sqrt{\lambda_1(p)}, -\sqrt{\lambda_{2}(p)},\sqrt{\lambda_{2}(p)}, \dots, -\sqrt{\lambda_{m}(p)}, \sqrt{\lambda_m(p)}.
\]
Another simple observation is that $\halfeven_{m} \circ \lift_{m} \colon \P_m \to \P_m$ is just the identity map, whereas the map $\lift_{m} \circ \halfeven_{m} \colon \P_{2m} \to \P_{2m}^E$ yields an even polynomial that has the same \emph{even} coefficients as the original polynomial. These observations provide a bijection that we will use constantly throughout this paper.

\begin{Fact} \label{fact:basic.Q}
 $\lift_{m}$ restricts to a bijection $\P_m(\rr_{\geq 0}) \to \P_{2m}^E(\rr)$, and the inverse of $\lift_{m}$ is $\halfeven_{m}$.
\end{Fact}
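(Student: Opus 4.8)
The plan is to deduce everything from the two observations already recorded immediately before the statement: that $\halfeven_{m} \circ \lift_{m} = \mathrm{id}_{\P_m}$, and that $\lift_{m} \circ \halfeven_{m}$ returns an even polynomial with the same even-indexed coefficients as its input. First I would note that $\lift_{m}$ does land in $\P_{2m}^E$, since $p\bigl(x^2\bigr)$ is visibly an even function of degree $2m$. To upgrade $\lift_{m}\colon \P_m \to \P_{2m}^E$ to a bijection with inverse $\halfeven_{m}$, it suffices to check that $\lift_{m}\circ\halfeven_{m}$ is the identity on $\P_{2m}^E$ (not merely on all of $\P_{2m}$): if $\tilde q \in \P_{2m}^E$, then by Definition~\ref{def:EvenPoly}(3) all odd-indexed coefficients of $\tilde q$ vanish, while $\lift_{m}(\halfeven_{m}(\tilde q))$ agrees with $\tilde q$ in every even-indexed coefficient and, being even, has vanishing odd-indexed coefficients; hence the two polynomials coincide. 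Together with $\halfeven_{m}\circ\lift_{m} = \mathrm{id}$, this exhibits $\lift_{m}$ and $\halfeven_{m}$ as mutually inverse bijections between $\P_m$ and $\P_{2m}^E$.

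It then remains to pin down which $p \in \P_m$ correspond to real-rooted even polynomials. For this I would invoke the explicit root description recorded above, namely that the roots of $\lift_{m}(p)$ are $\pm\sqrt{\lambda_1(p)}, \dots, \pm\sqrt{\lambda_m(p)}$. A pair $\pm\sqrt{\lambda_i(p)}$ consists of real numbers if and only if $\lambda_i(p)\in\rr_{\geq 0}$, so $\lift_{m}(p)\in\P_{2m}^E(\rr)$ if and only if every root of $p$ lies in $\rr_{\geq 0}$, i.e., if and only if $p\in\P_m(\rr_{\geq 0})$. Thus $\P_m(\rr_{\geq 0})$ is precisely the preimage $\lift_{m}^{-1}\bigl(\P_{2m}^E(\rr)\bigr)$, and restricting the bijection $\lift_{m}$ to this set yields a bijection $\P_m(\rr_{\geq 0}) \to \P_{2m}^E(\rr)$ whose inverse is the corresponding restriction of $\halfeven_{m}$.

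There is essentially no real obstacle in this argument; the one point deserving care is the observation that $\lift_{m}\circ\halfeven_{m}$ is the identity only after restricting the domain to even polynomials, which is exactly where Definition~\ref{def:EvenPoly}(3) enters. Everything else follows directly from the definitions of $\lift_{m}$, $\halfeven_{m}$, and the remarks preceding the statement.
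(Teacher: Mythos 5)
Your argument is correct and follows exactly the route the paper intends: the Fact is stated without a separate proof precisely because it is meant to follow from the two observations recorded just before it ($\halfeven_{m}\circ\lift_{m}=\mathrm{id}$ and the explicit root description of $\lift_{m}(p)$), together with Definition~\ref{def:EvenPoly}(3) to see that $\lift_{m}\circ\halfeven_{m}$ is the identity on even polynomials. You have simply written out carefully what the paper leaves implicit, including the one genuinely necessary check that the odd coefficients of an even polynomial vanish.
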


This is a finite free analogue of Lemma \ref{lem:Q.homeomorphism}. In Proposition~\ref{prop:Q.limit}, we will check that this bijection behaves well with respect to limits of empirical root distributions.

\subsection{Symmetrization}\label{ssec:symmetrization}

Another way to construct even polynomials is by taking the additive convolution of $p(x)$ with $p(-x)$, yielding an even polynomial of the same degree. This operation appeared naturally in~\cite{campbell2022commutators} when studying commutators in the context of finite free probability.

\begin{Notation}\label{not:sym}
 For $p \in \P_n$, the \emph{symmetrization} of $p$ is the polynomial
$
 \mathrm{Sym}(p):=p\boxplus_n (\dil{-1} p ) $.
\end{Notation}

It follows directly from the definition that $\mathrm{Sym}(p) = \mathrm{Sym}(\dil{-1} p)$. Notice also that if $p \in \P_n^E$, then $\mathrm{Sym}(p) = p \boxplus_n p$. In the following lemma, we collect more properties of the symmetrization.

\begin{Lemma}\label{lem:basic.symmetrization}
 Let $p,q \in \P_n$ and $\alpha \in \cc$, and write $c_{\alpha}(x) := (x-\alpha)^n$. Then
 \begin{enumerate}\itemsep=0pt
 \item[$(1)$] $\mathrm{Sym}(p) \in \P_n^E$;
 \item[$(2)$] $\mathrm{Sym}(p \boxplus_n q) = \mathrm{Sym}(p) \boxplus_n \mathrm{Sym}(q)$;
 \item[$(3)$] $\mathrm{Sym}(\dil{\alpha} p) = \dil{\alpha} \mathrm{Sym}(p)$;
 \item[$(4)$] $\mathrm{Sym}(p \boxplus_n c_{\alpha}) = \mathrm{Sym}(p)$;
 \item[$(5)$] if $p \in \P_{2m}(\rr)$, then $\halfeven_{m}(\mathrm{Sym}(p)) \in \P_m(\rr_{\geq 0})$.
 \end{enumerate}
\end{Lemma}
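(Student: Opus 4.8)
The plan is to prove the five items essentially in order, since each is either a direct consequence of the definitions or follows quickly from the earlier items and the basic facts about $\boxplus_n$ recorded in Remark~\ref{rem.convolution.basic.facts} and Definition~\ref{def:EvenPoly}. For~(1), I would observe that $\dil{-1}$ is an involution and an algebra automorphism for $\boxplus_n$ (this follows from the coefficient formula in Definition~\ref{def:FinFreeConv} together with $\sfe_k(\dil{-1}p)=(-1)^k\sfe_k(p)$, or alternatively from the differential-operator description in Remark~\ref{rem:additive.conv.diff.form}); hence $\dil{-1}\mathrm{Sym}(p) = \dil{-1}(p \boxplus_n \dil{-1}p) = \dil{-1}p \boxplus_n p = \mathrm{Sym}(p)$, and by criterion~(4) of Definition~\ref{def:EvenPoly} this means $\mathrm{Sym}(p)\in\P_n^E$. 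Item~(3) is the same kind of computation: $\dil{\alpha}$ commutes with $\dil{-1}$ and, again by the coefficient formula, $\dil{\alpha}(p\boxplus_n q) = \dil{\alpha}p \boxplus_n \dil{\alpha}q$ would need to be noted; then $\mathrm{Sym}(\dil{\alpha}p) = \dil{\alpha}p \boxplus_n \dil{-1}\dil{\alpha}p = \dil{\alpha}p \boxplus_n \dil{\alpha}\dil{-1}p = \dil{\alpha}\mathrm{Sym}(p)$.

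Item~(2) is then immediate from commutativity and associativity of $\boxplus_n$ (Remark~\ref{rem.convolution.basic.facts}) together with the automorphism property of $\dil{-1}$:
\[
\mathrm{Sym}(p\boxplus_n q) = (p\boxplus_n q)\boxplus_n \dil{-1}(p\boxplus_n q) = (p\boxplus_n \dil{-1}p)\boxplus_n(q\boxplus_n \dil{-1}q) = \mathrm{Sym}(p)\boxplus_n\mathrm{Sym}(q).
\]
For item~(4), note that $\dil{-1}c_\alpha = c_{-\alpha}$ and $c_\alpha \boxplus_n c_{-\alpha} = (x-\alpha)^n \boxplus_n (x+\alpha)^n = x^n$: indeed $(x-\alpha)^n = P(\partial_x)x^n$ with $P(t) = (t-\alpha)^n$ up to normalization is not quite the cleanest route, so instead I would use that $\boxplus_n$-convolution with $(x-\alpha)^n$ is a translation by $\alpha$ of the roots (equivalently $\sfe_k((x-\alpha)^n) = \binom{n}{k}\alpha^k$ plugged into the coefficient formula gives the shift), hence $c_\alpha \boxplus_n c_{-\alpha}$ has all roots at $0$. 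Then by~(2) and the fact that $\mathrm{Sym}(c_\alpha) = c_\alpha \boxplus_n c_{-\alpha} = x^n$ is the additive identity, $\mathrm{Sym}(p\boxplus_n c_\alpha) = \mathrm{Sym}(p)\boxplus_n \mathrm{Sym}(c_\alpha) = \mathrm{Sym}(p) \boxplus_n x^n = \mathrm{Sym}(p)$.

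The last item~(5) is the only one requiring input beyond formal manipulation, and I expect it to be the main (though still modest) obstacle. If $p\in\P_{2m}(\rr)$, then $\dil{-1}p\in\P_{2m}(\rr)$ as well, so by Theorem~\ref{thm:realrootedness}(1) the polynomial $\mathrm{Sym}(p) = p\boxplus_{2m}\dil{-1}p$ lies in $\P_{2m}(\rr)$; combined with~(1), $\mathrm{Sym}(p)\in\P_{2m}^E(\rr)$. Now I would invoke Fact~\ref{fact:basic.Q}: an even real-rooted polynomial of degree $2m$ has all its $\halfeven_m$-image roots in $\rr_{\geq0}$, because an even real polynomial has roots coming in pairs $\pm\beta$ with $\beta\in\rr$ (or a root at $0$), so the ``squares'' $\beta^2$ are non-negative, and $\halfeven_m$ is exactly the inverse of the degree-doubling map $\lift_m$ on this class. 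Hence $\halfeven_m(\mathrm{Sym}(p))\in\P_m(\rr_{\geq0})$. The only point to be careful about is confirming that $\halfeven_m$ as defined via coefficients in Notation~\ref{nota:DegreeHalving} genuinely is the inverse of $\lift_m$ on even polynomials and sends real-rooted even polynomials to non-negative-rooted ones — but this is precisely the content of Fact~\ref{fact:basic.Q}, which we are entitled to use.
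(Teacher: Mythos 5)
Your proposal is correct and follows essentially the same route as the paper: items (1)--(4) by the same formal manipulations with $\dil{-1}$ as a $\boxplus_n$-automorphism and $\mathrm{Sym}(c_\alpha)=c_0$, and item (5) via Theorem~\ref{thm:realrootedness}(1) combined with item (1) and Fact~\ref{fact:basic.Q}. The only difference is that you spell out the real-rootedness step in (5) which the paper leaves implicit; no gaps.
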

\begin{proof}
 The proof of (1) follows from the observation that
 \[ \dil{-1} \mathrm{Sym}(p) = \dil{-1} ( p \boxplus_n \dil{-1} p ) = (\dil{-1} p ) \boxplus_n p = \mathrm{Sym}(p). \]
 Alternatively, one can use the formula for the coefficients of the additive convolution and notice that the negative signs of~$\dil{-1} p$ will generate cancellations, causing the odd coefficients of~$\mathrm{Sym}(p)$ to vanish.

 For (2), observe that both sides are equal to
$ p\boxplus_n \dil{-1} p \boxplus_n q \boxplus_n \dil{-1} q$.
 Part (3) is a direct consequence of the fact that dilation operation distributes over additive convolutions. For the proof of (4), first notice that
$\mathrm{Sym}(c_\alpha)=c_\alpha \boxplus_n c_{-\alpha}=c_0$.
 So by (2), we have $\mathrm{Sym}(p \boxplus_n c_{\alpha}) = \mathrm{Sym}(p)$. Finally, (5) follows from~(1) and Fact~\ref{fact:basic.Q}.
\end{proof}

\subsection{Multiplicative convolution}

The degree doubling operation behaves well with respect to multiplicative convolution.

\begin{Proposition} \label{prop.multiplicative.halving}
 For $p,q \in \P_{2m}$, we have
 \begin{equation} \label{eq:coeff.even.mult}
 \halfeven_{m}(p \boxtimes_{2m} q) = \halfeven_{m}(p) \boxtimes_m \halfeven_{m}(q) \boxtimes_m \HGP{m}{-\frac{1}{2m}}{1-\frac{1}{2m}}.
 \end{equation}
 Equivalently, we can express this in terms of the degree doubling operation: for $p,q\in \P_m$, we have
 \begin{equation} \label{eq:coeff.even.mult.lift}
 \lift_{m}(p \boxtimes_m q)= \lift_{m}(p) \boxtimes_{2m} \lift_{m}(q) \boxtimes_{2m} \lift_{m}\Bigl(\HGP{m}{1-\frac{1}{2m}, 1-\frac{1}{2m}}{-\frac{1}{2m}, -\frac{1}{2m}} \Bigr).
 \end{equation}
\end{Proposition}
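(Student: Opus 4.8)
The plan is to work entirely at the level of the coefficients $\sfe_k$, since all three operations in play — $\boxtimes_{2m}$, $\boxtimes_m$, and the halving map $\halfeven_m$ — are defined by explicit formulas for the coefficients. First I would reduce \eqref{eq:coeff.even.mult.lift} to \eqref{eq:coeff.even.mult}: applying $\halfeven_m$ to both sides of \eqref{eq:coeff.even.mult.lift} and using $\halfeven_m \circ \lift_m = \mathrm{id}$ together with Remark~\ref{rem:convolutions.even} (so that $\lift_m(\cdots)$ of an even polynomial, composed appropriately, is handled by $\halfeven_m$), recovers \eqref{eq:coeff.even.mult}; conversely, applying $\lift_m$ to \eqref{eq:coeff.even.mult} and recognizing each $\halfeven_m(r)$ as $\halfeven_m$ of a polynomial in $\P_{2m}$ whose even part is what we want, plus the identity $\lift_m\bigl(\HGP{m}{a,a}{b,b}\bigr)$ matching up with $\HGP{m}{a}{b}$ under $\lift_m$, yields \eqref{eq:coeff.even.mult.lift}. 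So it suffices to prove one of them; I would prove \eqref{eq:coeff.even.mult}.

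For \eqref{eq:coeff.even.mult}, fix $0 \le k \le m$ and compute $\sfe_k$ of both sides. By Notation~\ref{nota:DegreeHalving}, the left-hand side has $\sfe_k = (-1)^k \sfe_{2k}(p \boxtimes_{2m} q) = (-1)^k \binom{2m}{2k}^{-1} \sfe_{2k}(p)\sfe_{2k}(q)$. On the right-hand side, using the multiplicativity of $\boxtimes_m$ on coefficients (Definition~\ref{def:FinFreeConv}) twice,
\[
\sfe_k\bigl(\halfeven_m(p)\boxtimes_m \halfeven_m(q) \boxtimes_m \HGP{m}{-\frac{1}{2m}}{1-\frac{1}{2m}}\bigr)
= \binom{m}{k}^{-2}\,\sfe_k(\halfeven_m(p))\,\sfe_k(\halfeven_m(q))\,\sfe_k\bigl(\HGP{m}{-\frac{1}{2m}}{1-\frac{1}{2m}}\bigr).
\]
Now $\sfe_k(\halfeven_m(p)) = (-1)^k \sfe_{2k}(p)$ and likewise for $q$, and by Definition~\ref{def:H.polynomials},
\[
\sfe_k\bigl(\HGP{m}{-\frac{1}{2m}}{1-\frac{1}{2m}}\bigr) = \binom{m}{k}\,\frac{\falling{m(-\frac{1}{2m})}{k}}{\falling{m(1-\frac{1}{2m})}{k}} = \binom{m}{k}\,\frac{\falling{-\frac{1}{2}}{k}}{\falling{m-\frac{1}{2}}{k}}.
\]
Substituting, the right-hand side becomes
\[
\binom{m}{k}^{-1}\,(-1)^{2k}\,\sfe_{2k}(p)\,\sfe_{2k}(q)\,\frac{\falling{-\frac{1}{2}}{k}}{\falling{m-\frac{1}{2}}{k}}
= (-1)^k\,\sfe_{2k}(p)\,\sfe_{2k}(q)\cdot (-1)^k\binom{m}{k}^{-1}\frac{\falling{-\frac{1}{2}}{k}}{\falling{m-\frac{1}{2}}{k}}.
\]
(Here I inserted $(-1)^k = (-1)^k$ harmlessly to match the target; more precisely one compares directly.) Comparing with the left-hand side, it remains to verify the purely combinatorial identity
\[
\binom{2m}{2k}^{-1} = \binom{m}{k}^{-1}\,\frac{\falling{-\frac{1}{2}}{k}}{\falling{m-\frac{1}{2}}{k}},
\qquad\text{i.e.}\qquad
\binom{2m}{2k} = \binom{m}{k}\,\frac{\falling{m-\frac{1}{2}}{k}}{\falling{-\frac{1}{2}}{k}},
\]
with attention to signs: $\falling{-\frac{1}{2}}{k} = (-1)^k \falling{\frac{1}{2}}{k}\cdot(\text{check})$, so one must make sure the $(-1)^k$ bookkeeping is consistent — this is the one place where a sign slip is easy.

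**Main obstacle.** The combinatorial identity $\binom{2m}{2k}\,\falling{-\frac12}{k} = \binom{m}{k}\,\falling{m-\frac12}{k}$ (up to the sign of $\falling{-\frac12}{k}$) is the crux, and it is exactly where the duplication formula \eqref{eq.falling.factorial.2m} does the work: writing $\binom{2m}{2k} = \falling{2m}{2k}/(2k)!$ and using $\falling{2m}{2k} = 2^{2k}\falling{m}{k}\falling{m-\frac12}{k}$ together with $(2k)! = \falling{2k}{2k} = 2^{2k}\falling{k}{k}\falling{k-\frac12}{k} = 2^{2k}\,k!\,\falling{k-\frac12}{k}$, one gets $\binom{2m}{2k} = \binom{m}{k}\,\falling{m-\frac12}{k}/\falling{k-\frac12}{k}$, and then one checks $\falling{k-\frac12}{k} = (-1)^k\falling{-\frac12}{k}$ via the reflection relation $\falling{k-\frac12}{k} = \raising{-\frac12}{k} = (-1)^k\falling{-\frac12}{k}$ (from $\raising{a}{k} = (-1)^k\falling{-a}{k}$ in the excerpt, applied with $a = -\frac12$, or directly $\falling{-\frac12}{k} = (-\frac12)(-\frac32)\cdots$). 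Assembling these, and carrying the $(-1)^k$ from $\sfe_k(\HGP{m}{-1/(2m)}{1-1/(2m)})$ correctly, closes the computation. The rest — bilinearity and associativity of $\boxtimes_m$, and the $\lift_m \leftrightarrow \halfeven_m$ translation — is routine given the facts already established.
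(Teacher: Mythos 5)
Your proposal is correct and follows essentially the same route as the paper: a coefficient-by-coefficient comparison using Definition~\ref{def:FinFreeConv}, the relation $\sfe_k(\halfeven_m(p))=(-1)^k\sfe_{2k}(p)$, and the duplication formula \eqref{eq.falling.factorial.2m}, with the reduction to the identity $\binom{2m}{2k}=\binom{m}{k}\falling{m-\frac12}{k}/\falling{k-\frac12}{k}$ and the sign relation $\falling{k-\frac12}{k}=(-1)^k\falling{-\frac12}{k}$ being exactly the computation the paper carries out in one chain of equalities. The only blemish is the intermediate claim $\falling{k-\frac12}{k}=\raising{-\frac12}{k}$, which should read $\raising{\frac12}{k}$ (apply $\raising{a}{k}=(-1)^k\falling{-a}{k}$ with $a=\frac12$); your final identity and sign bookkeeping are nonetheless correct.
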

\begin{proof}
 To prove equation \eqref{eq:coeff.even.mult}, we check the equality for every coefficient: for $0 \leq k \leq m$ the coefficient $\sfe_{k}(\halfeven_{m}(p \boxtimes_{2m} q))$ of the left-hand side polynomial is given by
 \begin{gather*}
 (-1)^k\sfe_{2k}(p \boxtimes_{2m} q) = \frac{(-1)^k}{\binom{2m}{2k}} \sfe_{2k}(p) \sfe_{2k}(q) \tag{Definition~\ref{def:FinFreeConv}} \\
 \qquad= (-1)^k(-1)^k \sfe_k(\halfeven_{m}(p)) (-1)^k \sfe_k(\halfeven_{m}(q)) \frac{(2k)!}{\falling{2m}{2k}} \tag{equation \eqref{eq:coeff.even.pol}} \\
 \qquad= \sfe_k(\halfeven_{m}(p)) \sfe_k(\halfeven_{m}(q)) \frac{(-1)^k k! \falling{k-\frac{1}{2}}{k}}{\falling{m}{k} \falling{m-\frac{1}{2}}{k}} \tag{equation \eqref{eq.falling.factorial.2m}}\\
\qquad = \frac{\sfe_k(\halfeven_{m}(p)) \sfe_k(\halfeven_{m}(q))}{\binom{m}{k} \binom{m}{k}} \binom{m}{k} \frac{ \falling{-\frac{1}{2}}{k}}{ \falling{m-\frac{1}{2}}{k}} \\
 \qquad= \sfe_k\Bigl(\halfeven_{m}(p) \boxtimes_m \halfeven_{m}(q)\boxtimes_m \HGP{m}{-\frac{1}{2m}}{1-\frac{1}{2m}}\Bigr). \tag{Definition~\ref{def:FinFreeConv}}
 \end{gather*}
 The proof of equation \eqref{eq:coeff.even.mult.lift} is analogous.
\end{proof}

\subsection{Additive and rectangular convolution}

The effect of taking even parts of finite free convolutions is somewhat more complicated in the additive case. Here, one finds that a variation of the rectangular convolution of Gribinski and Marcus \cite{gribinski2022rectangular} plays a key role. We learned of the generalized rectangular convolution from a talk by Gribinski \cite{TalkGribinski}. While working on this project, we became aware of another (equivalent) definition of generalized rectangular convolution given in \cite[Definition 1.11]{campbell2024free} in terms of differential operators.

\begin{Definition}[Generalized rectangular convolution]
 \label{def:rectangular.convolution}
 Let $m\in \nn$ and $\alpha \in \rr\setminus \{-1,\dots, -m\}$. For $p,q \in \P_m$, define the $(m,\alpha)$-rectangular convolution $p \boxplus_{m}^{\alpha} q \in \P_m$ as the polynomial determined by the coefficients
 \[ \sfe_k(p \boxplus_{m}^{\alpha} q)=\falling{m}{k} \falling{m+\alpha}{k} \sum_{i+j=k} \frac{\sfe_{i}(p)}{\falling{m}{i}\falling{m+\alpha}{i}} \frac{\sfe_{j}(q)}{\falling{m}{j}\falling{m+\alpha}{j}}. \]
\end{Definition}

\begin{Remark}
 It is worth mentioning that Definition~\ref{def:rectangular.convolution} is equivalent to the definitions presented in \cite{TalkGribinski} and \cite[Definition 1.11]{campbell2024free}. We prefer this presentation because one can readily rephrase it in terms of hypergeometric polynomials
 \[ \HGP{m}{\cdot}{1+\frac{\alpha}{m}} \boxtimes_m\left( p \boxplus_m^{\alpha} q \right)= \left( \HGP{m}{\cdot}{1+\frac{\alpha}{m}} \boxtimes_m p \right) \boxplus_m \left( \HGP{m}{\cdot}{1+\frac{\alpha}{m}} \boxtimes_m q \right). \]
\end{Remark}

With this definition in hand we can prove that the effect of taking even parts of finite free additive convolutions is related to the rectangular convolution in the case where $\alpha=-\frac{1}{2}$. This relation was mentioned without proof in \cite{TalkGribinski}; we include the proof here for the reader's convenience.

\begin{Proposition}
 \label{prop:rectangular.convolution}
 For $p,q \in \P_{2m}^E$, we have
 \begin{equation} \label{eq:coeff.even.add}
 \halfeven_{m}(p \boxplus_{2m} q)= \halfeven_{m}(p) \boxplus_{m}^{-1/2} \halfeven_{m}(q).
 \end{equation}
\end{Proposition}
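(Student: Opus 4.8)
The plan is to verify the identity coefficient-by-coefficient, exactly as in the proof of Proposition~\ref{prop.multiplicative.halving}, since both sides are polynomials of degree $m$ and it suffices to match $\sfe_k$ for $0 \leq k \leq m$. Fix such a $k$. On the left-hand side, Notation~\ref{nota:DegreeHalving} gives $\sfe_k(\halfeven_m(p \boxplus_{2m} q)) = (-1)^k \sfe_{2k}(p \boxplus_{2m} q)$, and then Definition~\ref{def:FinFreeConv} expands this as $(-1)^k \falling{2m}{2k} \sum_{i+j=2k} \frac{\sfe_i(p)\sfe_j(q)}{\falling{2m}{i}\falling{2m}{j}}$. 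The crucial simplification is that $p, q \in \P_{2m}^E$, so by Definition~\ref{def:EvenPoly}(3) the only surviving terms are those with $i, j$ both even; writing $i = 2a$, $j = 2b$ with $a + b = k$ collapses the sum to $(-1)^k \falling{2m}{2k} \sum_{a+b=k} \frac{\sfe_{2a}(p)\sfe_{2b}(q)}{\falling{2m}{2a}\falling{2m}{2b}}$.

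Next I would convert everything into ``halved'' data: $\sfe_{2a}(p) = (-1)^a \sfe_a(\halfeven_m(p))$ and likewise for $q$, and apply the duplication formula \eqref{eq.falling.factorial.2m}, i.e.\ $\falling{2m}{2k} = 2^{2k}\falling{m}{k}\falling{m-\frac12}{k}$ (and similarly for $\falling{2m}{2a}$, $\falling{2m}{2b}$). The power-of-two factors combine as $2^{2k}/(2^{2a} 2^{2b}) = 1$, and the signs combine as $(-1)^k (-1)^a (-1)^b = (-1)^{2k} = 1$. Thus the left-hand coefficient becomes
\[
\falling{m}{k}\falling{m-\tfrac12}{k} \sum_{a+b=k} \frac{\sfe_a(\halfeven_m(p))\,\sfe_b(\halfeven_m(q))}{\falling{m}{a}\falling{m-\frac12}{a}\,\falling{m}{b}\falling{m-\frac12}{b}}.
\]
Now compare with the right-hand side: by Definition~\ref{def:rectangular.convolution} with $\alpha = -\frac12$, the coefficient $\sfe_k(\halfeven_m(p) \boxplus_m^{-1/2} \halfeven_m(q))$ equals $\falling{m}{k}\falling{m-\frac12}{k} \sum_{i+j=k} \frac{\sfe_i(\halfeven_m(p))}{\falling{m}{i}\falling{m-\frac12}{i}} \frac{\sfe_j(\halfeven_m(q))}{\falling{m}{j}\falling{m-\frac12}{j}}$, which is literally the same expression. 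This proves \eqref{eq:coeff.even.add}.

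I do not expect a serious obstacle here; the argument is essentially the same bookkeeping as in Proposition~\ref{prop.multiplicative.halving}. The one point that requires a little care — and the only place the hypothesis $p, q \in \P_{2m}^E$ (rather than merely $\P_{2m}$) is used — is the restriction of the index sum $i + j = 2k$ to even $i, j$; without evenness the odd-indexed $\sfe$'s would contribute terms that have no natural interpretation via $\halfeven_m$, which is precisely why the additive statement needs an evenness hypothesis while the multiplicative one in Proposition~\ref{prop.multiplicative.halving} does not. It is also worth double-checking the admissibility condition in Definition~\ref{def:rectangular.convolution}: we need $\alpha = -\frac12 \notin \{-1, \dots, -m\}$, which holds, so $\boxplus_m^{-1/2}$ is well-defined, and the denominators $\falling{m - \frac12}{i}$ appearing above are nonzero for all $0 \leq i \leq m$.
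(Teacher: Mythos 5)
Your proof is correct and follows essentially the same route as the paper's: both verify the identity coefficient-by-coefficient, use evenness to restrict the convolution sum to even indices, apply the duplication formula \eqref{eq.falling.factorial.2m}, and observe that the signs and powers of two cancel to recover the $\alpha=-\tfrac12$ rectangular convolution. The only difference is that you spell out the reduction of the sum over $i+j=2k$ to even indices (which the paper performs implicitly) and you additionally check the admissibility condition for $\boxplus_m^{-1/2}$; both are harmless elaborations.
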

\begin{proof}
 For $0 \leq k \leq m$, we have
 \begin{align*}
 \sfe_{k}^{(m)}(\halfeven_{m}(p \boxplus_{2m} q)) {}&= (-1)^k \sfe_{2k}^{(2m)}(p \boxplus_{2m} q) = (-1)^k\falling{2m}{2k}\sum_{i+j=k} \frac{\sfe_{2i}(p)}{\falling{2m}{2i}} \frac{\sfe_{2j}(q)}{\falling{2m}{2j}} \\
 &= (-1)^k 4^k \falling{m}{k} \falling{m-\frac{1}{2}}{k} \sum_{i+j=k} \frac{(-1)^i\sfe_{i}(\halfeven_{m}(p))}{4^i\falling{m}{i}\falling{m-\frac{1}{2}}{i}} \frac{(-1)^j\sfe_{j}(\halfeven_{m}(q))}{4^j\falling{m}{j}\falling{m-\frac{1}{2}}{j}} \\
 &= \sfe_k\bigl(\halfeven_{m}(p) \boxplus_{m}^{-1/2} \halfeven_{m}(q)\bigr).
 \end{align*}
 Since the coefficients match, the polynomials are the same.
\end{proof}

We should also mention that Gribinski \cite{TalkGribinski} conjectured that the generalized rectangular convolution preserves positive real roots.

\begin{Conjecture}
\label{conj.rectangular}
Given $\alpha>-1$, if $p,q \in \P_m(\rr_{\geq 0})$, then $p \boxplus_{m}^{\alpha} q \in \P_m(\rr_{\geq 0})$.
\end{Conjecture}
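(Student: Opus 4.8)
The plan is to break the range $\alpha>-1$ into three pieces and handle the two ``easy'' pieces with results already in hand, isolating the genuinely open case at the end. \emph{The point $\alpha=-\tfrac12$ is immediate from the machinery of this paper.} Given $\tilde p,\tilde q\in\P_m(\rr_{\geq 0})$, lift them to $p:=\lift_m(\tilde p)$ and $q:=\lift_m(\tilde q)$, which lie in $\P_{2m}^E(\rr)$ by Fact~\ref{fact:basic.Q}. Theorem~\ref{thm:realrootedness}(1) together with Remark~\ref{rem:convolutions.even} gives $p\boxplus_{2m}q\in\P_{2m}^E(\rr)$, hence $\halfeven_m(p\boxplus_{2m}q)\in\P_m(\rr_{\geq 0})$, again by Fact~\ref{fact:basic.Q}. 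By Proposition~\ref{prop:rectangular.convolution} this last polynomial equals $\tilde p\boxplus_m^{-1/2}\tilde q$, so the conjecture holds at $\alpha=-\tfrac12$.

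\emph{The range $\alpha\geq 0$ should reduce to Gribinski--Marcus.} Using the dictionary in the remark preceding the conjecture (or \cite[Definition~1.11]{campbell2024free}), identify $\boxplus_m^{\alpha}$ with the rectangular finite free convolution of \cite{gribinski2022rectangular} at the appropriate aspect ratio. For $\alpha\in\zz_{\geq 0}$ this convolution is an expected characteristic polynomial of $(A+UBV)(A+UBV)^{\ast}$ with $A,B$ rectangular and $U,V$ Haar, so its roots are not merely real but nonnegative, which is exactly what is needed; for non-integer $\alpha\geq 0$ one would either invoke the real-parameter version of their theorem or pass to the limit from integer values together with a uniform a priori bound on the roots.

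\emph{The remaining range $\alpha\in(-1,0)\setminus\{-\tfrac12\}$ is where the real work lies.} Here I would pass to the differential-operator description of $p\boxplus_m^{\alpha}q$ from \cite[Definition~1.11]{campbell2024free}, in which $p$ and $q$ are realized as ``rectangular'' differential operators applied to a fixed polynomial and then composed, and try to show that each such operator maps $\P_m(\rr_{\geq 0})$ into itself. Concretely, I would aim to reduce this to a Laguerre--P\'olya/Hermite--Biehler statement --- for instance that $x\,\frac{\partial^2}{\partial x^2}+c\,\frac{\partial}{\partial x}$ and its powers preserve nonnegative-rootedness for $c$ in the relevant range --- or, failing a clean operator identity, to a two-variable real-stability argument showing that the polarization/substitution which produces $p\boxplus_m^{\alpha}q$ from $p(x)q(y)$ preserves the relevant half-plane stability.

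\emph{Main obstacle.} The first two cases are essentially bookkeeping on top of cited results; the third is the content, and it is precisely why the statement remains a conjecture. The even-polynomial trick is rigid --- it yields exactly $\alpha=-\tfrac12$ and nothing near it --- and for $\alpha\in(-1,0)$ there is no rectangular random-matrix model from which to borrow nonnegativity, so one genuinely needs something new: either a ``generalized lift'' realizing $\boxplus_m^{\alpha}$ as the even part of an ordinary finite free convolution for a continuum of $\alpha$, or a direct proof that the $\alpha$-family of differential operators of \cite{campbell2024free} preserves $\P_m(\rr_{\geq 0})$. A plain interpolation in $\alpha$ from the known values seems insufficient on its own, since those values are not dense in $(-1,\infty)$ and there is no evident monotonicity of the roots in $\alpha$ to propagate real-rootedness.
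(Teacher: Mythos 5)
The statement you were given is stated in the paper as a \emph{conjecture}, and the paper does not prove it. What the paper does is record that the cases $\alpha\in\zz_{\geq 0}$ are known (\,$\alpha=0$ from \cite[Theorem~3.1]{MSS15} and $\alpha=1,2,3,\dots$ from \cite[Theorem~2.3]{gribinski2022rectangular}\,) and then prove the single additional case $\alpha=-\tfrac{1}{2}$ as a corollary. Your treatment of $\alpha=-\tfrac{1}{2}$ is exactly the paper's argument: lift $p,q$ to even polynomials via $\lift_m$ (Fact~\ref{fact:basic.Q}), use Theorem~\ref{thm:realrootedness} and Remark~\ref{rem:convolutions.even} to see that the additive convolution stays in $\P_{2m}^E(\rr)$, and come back down via $\halfeven_m$ using Proposition~\ref{prop:rectangular.convolution}. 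On the part of the statement that is actually provable with the paper's tools, you and the paper coincide, and you are right that the remaining range is genuinely open.

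The one place where your write-up overreaches is the claim that ``the range $\alpha\geq 0$ should reduce to Gribinski--Marcus.'' It does not: the random-matrix model $(A+UBV)(A+UBV)^{\ast}$ only makes sense when the discrepancy between the matrix dimensions is a non-negative integer, so \cite[Theorem~2.3]{gribinski2022rectangular} covers only $\alpha\in\zz_{\geq 0}$; there is no real-parameter version of their theorem to ``invoke,'' and passing to a limit from integer values cannot produce $\boxplus_m^{\alpha}$ for a fixed non-integer $\alpha$ --- the operations $\boxplus_m^{k}$ for integers $k$ near $\alpha$ are different operations, not approximations of $\boxplus_m^{\alpha}$ in any sense that would transport positivity of roots. (You essentially concede this in your final paragraph, which is in tension with your second one.) The accurate summary, matching the paper, is that the conjecture is known exactly for $\alpha\in\zz_{\geq 0}\cup\bigl\{-\tfrac{1}{2}\bigr\}$ and open for every other $\alpha>-1$. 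Your proposed strategies for the open range (showing the differential operators of \cite[Definition~1.11]{campbell2024free} preserve $\P_m(\rr_{\geq 0})$, or a real-stability argument) are plausible directions but are not carried out here and do not appear in the paper, so they cannot be counted as part of a proof.
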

At the time of writing, the conjecture is only known to hold when $\alpha$ is a non-negative integer. The case $\alpha=0$ was proved in \cite[Theorem 3.1]{MSS15} and the cases $\alpha=1,2,3,\dots$ were proved in \cite[Theorem 2.3]{gribinski2022rectangular}.

To finish this section, we mention how to prove the case $\alpha=-\frac{1}{2}$ of Conjecture \ref{conj.rectangular} using Proposition~\ref{prop:rectangular.convolution}.

\begin{Corollary}
 If $p,q\in \P_m(\rr_{\geq 0})$, then $p\boxplus_{m}^{-1/2}q\in \P_m(\rr_{\geq 0})$.
\end{Corollary}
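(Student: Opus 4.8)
The plan is to reduce the claim to the real-rootedness of the ordinary finite free additive convolution in degree $2m$, via the degree-doubling dictionary. First I would take $p,q\in\P_m(\rr_{\geq 0})$ and lift them to $\tilde p:=\lift_m(p)$ and $\tilde q:=\lift_m(q)$. By Fact~\ref{fact:basic.Q}, $\lift_m$ restricts to a bijection $\P_m(\rr_{\geq 0})\to\P_{2m}^E(\rr)$ with inverse $\halfeven_m$, so $\tilde p,\tilde q\in\P_{2m}^E(\rr)$ and $\halfeven_m(\tilde p)=p$, $\halfeven_m(\tilde q)=q$.

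Next I would form the ordinary convolution $\tilde p\boxplus_{2m}\tilde q$. By Theorem~\ref{thm:realrootedness}(1) this polynomial lies in $\P_{2m}(\rr)$, and by Remark~\ref{rem:convolutions.even} the set $\P_{2m}^E$ is closed under $\boxplus_{2m}$, so in fact $\tilde p\boxplus_{2m}\tilde q\in\P_{2m}^E(\rr)$. Applying Fact~\ref{fact:basic.Q} in the inverse direction then gives $\halfeven_m(\tilde p\boxplus_{2m}\tilde q)\in\P_m(\rr_{\geq 0})$.

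Finally I would invoke Proposition~\ref{prop:rectangular.convolution}, which identifies $\halfeven_m(\tilde p\boxplus_{2m}\tilde q)=\halfeven_m(\tilde p)\boxplus_m^{-1/2}\halfeven_m(\tilde q)=p\boxplus_m^{-1/2}q$. Combining this with the previous paragraph yields $p\boxplus_m^{-1/2}q\in\P_m(\rr_{\geq 0})$, which is the assertion.

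There is essentially no difficult step here: the argument is a short diagram chase using the bijection of Fact~\ref{fact:basic.Q}, the closure of even polynomials under $\boxplus_{2m}$ (Remark~\ref{rem:convolutions.even}), and the classical real-rootedness preservation of Theorem~\ref{thm:realrootedness}(1), glued together by Proposition~\ref{prop:rectangular.convolution}. The only point that deserves a moment's care is that the even real-rooted polynomial $\tilde p\boxplus_{2m}\tilde q$ really is $\lift_m$ of a polynomial with nonnegative roots; but an even polynomial of degree $2m$ with all real roots has its roots occurring in pairs $\pm\sqrt{t}$ with $t\geq 0$, so this is precisely the content of Fact~\ref{fact:basic.Q}.
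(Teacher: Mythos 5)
Your proposal is correct and follows essentially the same route as the paper: lift $p$ and $q$ via $\lift_m$, use the real-rootedness of $\boxplus_{2m}$ together with closure of even polynomials, identify the result with $p\boxplus_m^{-1/2}q$ via Proposition~\ref{prop:rectangular.convolution}, and apply $\halfeven_m$ using Fact~\ref{fact:basic.Q}. The only difference is that you spell out the intermediate justifications (Theorem~\ref{thm:realrootedness}(1) and Remark~\ref{rem:convolutions.even}) slightly more explicitly than the paper does.
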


\begin{proof}
 Using \eqref{eq:coeff.even.add}, we can write
\smash{$ p \boxplus_{m}^{-1/2} q = \halfeven_{m}(\lift_{m}(p) \boxplus_{2m} \lift_{m}(q))$}.
 Since $p,q\in \P_m(\rr_{\geq 0})$, we have $\lift_{m}(p), \lift_{m}(q)\in \P_{2m}^E(\rr)$ and $\lift_{m}(p) \boxplus_{2m} \lift_{m}(q)\in \P_{2m}^E(\rr)$. Therefore, after applying the degree halving operation we conclude that \smash{$p \boxplus_{m}^{-1/2} q= \halfeven_{m}(\lift_{m}(p) \boxplus_{2m} \lift_{m}(q))\in \P_m(\rr_{\geq 0})$}.
\end{proof}

\section{Even hypergeometric polynomials}
\label{sec:EvenHyper}

The purpose of this section is to study the specifics of how even hypergeometric polynomials interact with the finite free convolution. Since this large class of polynomials contains several regions of parameters where the polynomials have all real, positive, or negative roots, understanding the multiplicative and additive convolutions in these cases will provide us with a large sample of even polynomials. For a summary of the results in this section using standard hypergeometric series notation ${}_iF_j$ see Section~\ref{sec:results.standard.notation}.

Our approach resembles that of \cite{mfmp2024hypergeometric}, with the difference that we want to study hypergeometric polynomials with the variable $x^2$ rather than $x$. This requires some adjustment to the convolution formulas, because of the dependence on the degree, which is now $2m$ instead of $m$.

\begin{Notation}[even hypergeometric polynomials]\label{not:even.hgp}
 Given $i,j,m\in \nn$, $\bm a =(a_1, \dots, a_i) \in \rr^i$ and~${\bm b =(b_1, \dots, b_j)\in \rr^j}$, write
\smash{$ \mathcal{H}_{m}^{E} \bigl[\begin{smallmatrix}\bm b\\ \bm a\end{smallmatrix}\bigr] := \lift_{m} \bigl( \mathcal{H}_{m}\bigl[\begin{smallmatrix}\bm b\\ \bm a\end{smallmatrix}\bigr] \bigr)$},
 where $\lift_{m}$ is the bijection from Notation~\ref{nota:DegreeHalving}. In terms of coefficients, we have
 \begin{align*}
 \sfe_{2k} \left( \HGPS{m}{\bm b}{\bm a} \right) = (-1)^k \sfe_k \left( \HGP{m}{\bm b}{\bm a} \right) = (-1)^k \binom{m}{k} \frac{\falling{\bm b m}{k}}{\falling{\bm a m}{k}}
= (-1)^k \binom{2m}{2k} \frac{\falling{k-\frac{1}{2}}{k} \falling{\bm b m}{k}}{\falling{m-\frac{1}{2}}{k}\falling{\bm a m}{k}}
 \end{align*}
 for $0 \leq k \leq m$.
\end{Notation}

\begin{Example}[Bernoulli polynomials]
 \label{exm:Bernoulli}
 The simplest even hypergeometric polynomial is
 \[ B_{2m}(x)=\HGPS{m}{\cdot}{\cdot}(x)=\bigl(x^2-1\bigr)^m. \]
 We will call this a Bernoulli polynomial because its empirical root distribution is a Bernoulli distribution with equal weights at $1$ and $-1$.
\end{Example}

\begin{Example}[Hermite polynomials] \label{exm:Hermite}
 Another important sequence of even hypergeometric polynomials is the one we encountered in Example \ref{exa:LaguerreHermite}:
 \[ H_{2m} = \dil{\sqrt{\frac{1}{m}}} \HGPS{m}{1-\frac{1}{2m}}{\cdot}. \]
\end{Example}

\subsection{Preliminary results}
To study the convolution of even hypergeometric polynomials, we first need to understand how the polynomials look in differential form.

A formula to write polynomials \smash{$\mathcal{H}_{m}\bigl[\begin{smallmatrix}\bm b\\ \bm a\end{smallmatrix}\bigr]$} as differential operators was implicitly found in \cite{mfmp2024hypergeometric}. The formula to write \smash{$\mathcal{H}_{m}^{E} \bigl[\begin{smallmatrix}\bm b\\ \bm a\end{smallmatrix}\bigr]$} in terms of a differential operator can be derived in a similar way, and it can be generalized for larger powers. We will first prove a general lemma for an arbitrary power and then we specialize to the cases we are concerned.

\begin{Lemma}
 \label{lem:differential.powers}
 Given a constant $c\in \rr$, integers $i,j,m,l\in \nn$, and tuples of parameters $\bm a =(a_1, \dots, a_i)\in \rr^i$ and $\bm b =(b_1, \dots, b_j)\in \rr^j$, if a polynomial has the following differential form
 \[ p(x)= \HGF{j}{i}{-m\bm b}{-m\bm a}{c\left(\frac{\partial}{\partial x}\right)^l} x^{lm} , \]
 then we can express it as the following hypergeometric polynomial
 \[ p(x) = \bigl( (-1)^{i+j+1} l^lc \bigr)^m \HGP{m}{\bm b, 1-\frac{1}{lm}, 1-\frac{2}{lm}, \dots, 1-\frac{l-1}{lm}}{\bm a} \left( \frac{(-1)^{i+j+1}}{l^lc} x^l \right). \]
\end{Lemma}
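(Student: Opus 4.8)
The plan is to reduce the statement to a direct coefficient computation by expanding both sides in the monomial basis and matching. First I would write out the left-hand side: since $\HGF{j}{i}{-m\bm b}{-m\bm a}{y} = \sum_{k\geq 0} \frac{\raising{-m\bm b}{k}}{\raising{-m\bm a}{k}}\frac{y^k}{k!}$, substituting $y = c\left(\frac{\partial}{\partial x}\right)^l$ and applying to $x^{lm}$ gives
\[
p(x) = \sum_{k=0}^{m} \frac{\raising{-m\bm b}{k}}{\raising{-m\bm a}{k}} \frac{c^k}{k!} \falling{lm}{lk}\, x^{lm-lk},
\]
because $\left(\frac{\partial}{\partial x}\right)^{lk} x^{lm} = \falling{lm}{lk}\,x^{lm-lk}$, and the series terminates at $k=m$ since $\falling{lm}{lk}=0$ for $k>m$. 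So $p$ has nonzero coefficients only in powers $x^{l(m-k)}$; in particular $p(x)$ is a polynomial in $x^l$ of degree $m$ in $x^l$, consistent with the claimed right-hand side.

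Next I would expand the right-hand side. Writing $z := \frac{(-1)^{i+j+1}}{l^l c}x^l$ and $\gamma := \bigl((-1)^{i+j+1} l^l c\bigr)^m$, I need the coefficient of $z^{m-k}$ in $\HGP{m}{\bm b, 1-\frac{1}{lm}, \dots, 1-\frac{l-1}{lm}}{\bm a}(z)$, which by Definition~\ref{def:H.polynomials} and \eqref{monicP} is $(-1)^{m-k}\,\sfe_{m-k}$ of that hypergeometric polynomial. Using the index substitution $\sfe_{m-k} = \binom{m}{k}\frac{\falling{\bm b m}{m-k}\,\falling{(1-\frac1{lm})m}{m-k}\cdots\falling{(1-\frac{l-1}{lm})m}{m-k}}{\falling{\bm a m}{m-k}}$, and the identity $\falling{N}{m-k}$ relating to $\falling{N}{m}$ and $\falling{N}{k}$ — more precisely the standard reversal $\falling{N}{m-k} = \falling{N}{m}/\falling{N-m+k}{k}$ applied componentwise — I can convert everything to falling factorials with subscript $k$. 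The crucial arithmetic step is to recognize that the product
\[
\falling{(1-\tfrac1{lm})m}{m-k}\cdots\falling{(1-\tfrac{l-1}{lm})m}{m-k}, \quad\text{together with }\falling{\bm a m}{m-k}\text{ and }\binom mk\text{ versus }k!,
\]
repackages (via the multiplication formula $\falling{lN}{lr} = l^{lr}\prod_{s=0}^{l-1}\falling{N-\frac sl}{r}$, a generalization of \eqref{eq.falling.factorial.2m}) into exactly $\falling{lm}{lk}/k!$ up to the powers of $l$ and the signs already accounted for by $\gamma$ and $z$. Then matching the Pochhammer ratio $\raising{-m\bm b}{k}/\raising{-m\bm a}{k}$ against $\falling{\bm b m}{k}/\falling{\bm a m}{k}$ via $\falling{a}{k} = (-1)^k\raising{-a}{k}$ completes the identification of each coefficient.

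The main obstacle is the bookkeeping in that repackaging step: one must carefully track three separate sign/power contributions — the $(-1)^{i+j+1}$ factors (one per $\bm a$-entry, one per $\bm b$-entry, plus one more, which is where the extra "$+1$" comes from once the $l-1$ new upper parameters are counted), the powers of $l^l$ coming from the multiplication formula, and the constant $c$ — and verify they assemble into precisely $\gamma$ and the argument scaling $z$. I expect to handle this by first verifying the case $l=1$ (which recovers the known differential form from \cite{mfmp2024hypergeometric} and pins down the sign convention $(-1)^{i+j+1}$), then treating general $l$ by the multiplication formula applied to the operator $\left(\frac{\partial}{\partial x}\right)^l$ acting on $x^{lm}$, i.e.\ essentially reducing to $l=1$ after the substitution $x\mapsto x^l$ at the level of the monomial expansion. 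Once the $l=1$ sign normalization is fixed, the general-$l$ case is forced, and the computation, though tedious, is routine.
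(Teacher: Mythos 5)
Your overall strategy is exactly the paper's: expand the left-hand side termwise to get $\sum_k \frac{\raising{-m\bm b}{k}}{\raising{-m\bm a}{k}}\frac{c^k}{k!}\falling{lm}{lk}x^{l(m-k)}$, convert rising to falling factorials via $\falling{a}{k}=(-1)^k\raising{-a}{k}$, apply the multiplication formula $\falling{lm}{lk}=l^{lk}\falling{m}{k}\falling{m-\frac{1}{l}}{k}\cdots\falling{m-\frac{l-1}{l}}{k}$ to produce the $\binom{m}{k}$ and the $l-1$ extra upper parameters, and match coefficients against the claimed right-hand side. The paper does precisely this and nothing more.

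There is, however, one concrete slip in your right-hand-side expansion which, followed literally, derails the matching. By \eqref{monicP}, $\HGP{m}{\cdot}{\cdot}(z)=\sum_{s=0}^m z^{m-s}(-1)^s\sfe_s$, so the coefficient of $z^{m-k}$ is $(-1)^{k}\sfe_{k}$, \emph{not} $(-1)^{m-k}\sfe_{m-k}$ as you wrote (the latter is the coefficient of $z^{k}$). This mis-indexing is what forces the ``reversal'' $\falling{N}{m-k}=\falling{N}{m}/\falling{N-m+k}{k}$ into your argument; that identity is true, but it produces falling factorials with shifted arguments such as $\falling{mb_t-m+k}{k}$, which do not match the factors $\falling{m\bm b}{k}$, $\falling{m\bm a}{k}$ coming from the left-hand side, so the ``repackaging'' cannot be completed as described --- and more fundamentally you would be equating the coefficient of $x^{l(m-k)}$ on the left with the coefficient of $x^{lk}$ on the right. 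With the correct indexing the reversal step disappears entirely: the coefficient of $x^{l(m-k)}$ on the left is $(-1)^{(i+j)k}\bigl(l^lc\bigr)^k\binom{m}{k}\falling{m\bm b}{k}\prod_{s=1}^{l-1}\falling{m-\frac{s}{l}}{k}\big/\falling{m\bm a}{k}$, which is term-by-term the coefficient of $x^{l(m-k)}$ of the claimed expression. One further small correction: the ``$+1$'' in $(-1)^{i+j+1}$ is not generated by counting the $l-1$ new upper parameters; it is exactly cancelled by the sign $(-1)^k$ in the monic representation \eqref{monicP}, since $\bigl((-1)^{i+j+1}\bigr)^{2m-k}(-1)^k=(-1)^{(i+j)k}$. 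These are bookkeeping repairs rather than conceptual gaps; once made, your plan reproduces the paper's proof.
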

\begin{proof}
 We have
 \begin{align*}
 \HGF{j}{i}{-\bm bm}{-\bm am}{c\left(\frac{\partial}{\partial x}\right)^l} x^{lm} &= \sum_{k=0}^{\infty} \frac{\raising{-m\bm b}{k} }{\raising{-m\bm a}{k}} \frac{c^k}{k !}\left(\frac{\partial}{\partial x}\right)^{lk}x^{lm}\\
 &= \sum_{k=0}^m (-1)^{(i+j)k} \frac{\falling{m\bm b }{k} \falling{lm}{lk} }{\falling{m\bm a }{k} } \frac{c^k}{k !} x^{lm-lk}.
 \end{align*}
 If $s$ is not a multiple of $l$, then $\sfe_s(p) = 0$. Otherwise, we have
 \begin{align*}
 (-1)^{lk} \sfe_{lk}(p) &= \frac{\falling{m\bm b}{k}}{\falling{m\bm a}{k}} (-1)^{(i+j)k} \frac{c^k}{k!}\falling{lm}{lk} \\
 &= \frac{\falling{m\bm b}{k}}{\falling{m\bm a}{k}} \frac{c^k}{k!} (-1)^{(i+j)k} l^{lk} \falling{m}{k} \falling{m-\frac{1}{l}}{k} \cdots \falling{m-\frac{l-1}{l}}{k} \\
 &= \bigl((-1)^{i+j}l^lc\bigr)^m \binom{m}{k} \frac{\falling{m\bm b }{k} \falling{m-\frac{1}{l}}{k}\cdots \falling{m-\frac{l-1}{l}}{k} }{\falling{m\bm a }{k} } \left(\frac{ (-1)^{i+j}}{l^lc}\right)^{m-k}.
 \end{align*}
 So we have proven the claim
 \[ p(x) = \bigl((-1)^{i+j+1}l^lc\bigr)^m \HGP{m}{\bm b, 1-\frac{1}{lm}, 1-\frac{2}{lm}, \dots, 1-\frac{l-1}{lm}}{\bm a} \left( \frac{(-1)^{i+j+1}}{l^lc} x^l\right). \tag*{\qed} \]\renewcommand{\qed}{}
\end{proof}

As particular cases, we obtain the following.

\begin{Corollary} \label{cor:differential.form}
 With the assumptions of Lemma {\rm\ref{lem:differential.powers}}, in the case $l=1$, we have
 \[
 \HGF{j}{i}{-m\bm b}{-m\bm a}{c\frac{\partial}{\partial x}} x^m= \dil{c(-1)^{i+j+1}} \HGP{m}{\bm b}{\bm a}(x).
 \]
 In the case $l=2$, we have
 \begin{equation} \label{eq:differential.H.square}
 \HGF{j}{i}{-m\bm b}{-m\bm a}{c\left(\frac{\partial}{\partial x}\right)^2} x^{2m}= \dil{2\sqrt{c(-1)^{i+j+1}}} \HGPS{m}{\bm b, 1-\frac{1}{2m}}{\bm a}(x).
 \end{equation}
\end{Corollary}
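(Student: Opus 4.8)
The plan is to deduce both identities directly from Lemma~\ref{lem:differential.powers} by specializing the exponent $l$ and then rewriting the resulting hypergeometric polynomial in terms of the dilation operation.

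First I would treat the case $l=1$. Here $l^l=1$ and the auxiliary list of parameters $1-\frac{1}{lm}, 1-\frac{2}{lm}, \dots, 1-\frac{l-1}{lm}$ in Lemma~\ref{lem:differential.powers} is empty (the index range $1,\dots,l-1$ is void), so the lemma gives
\[
\HGF{j}{i}{-m\bm b}{-m\bm a}{c\frac{\partial}{\partial x}}x^m = \bigl((-1)^{i+j+1}c\bigr)^m \HGP{m}{\bm b}{\bm a}\!\left(\frac{(-1)^{i+j+1}}{c}\,x\right).
\]
Writing $\alpha := c(-1)^{i+j+1}$ and using that $(-1)^{i+j+1}$ is its own reciprocal, so that $\alpha^{-1} = (-1)^{i+j+1}/c$ and $\alpha^m = ((-1)^{i+j+1}c)^m$, the right-hand side is exactly $\alpha^m \HGP{m}{\bm b}{\bm a}(\alpha^{-1}x) = [\dil{\alpha}\HGP{m}{\bm b}{\bm a}](x)$ by the definition of the dilation. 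This is the asserted formula.

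Next I would treat the case $l=2$. Now $l^l=4$ and the auxiliary list consists of the single parameter $1-\frac{1}{2m}$, so Lemma~\ref{lem:differential.powers} yields
\[
\HGF{j}{i}{-m\bm b}{-m\bm a}{c\Bigl(\frac{\partial}{\partial x}\Bigr)^2}x^{2m} = \bigl((-1)^{i+j+1}4c\bigr)^m \HGP{m}{\bm b, 1-\frac{1}{2m}}{\bm a}\!\left(\frac{(-1)^{i+j+1}}{4c}\,x^2\right).
\]
By Notation~\ref{not:even.hgp} we have $\HGPS{m}{\bm b, 1-\frac{1}{2m}}{\bm a}(x) = \HGP{m}{\bm b, 1-\frac{1}{2m}}{\bm a}(x^2)$, a polynomial of degree $2m$. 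Choosing any $\beta$ with $\beta^2 = 4c(-1)^{i+j+1}$, for instance $\beta = 2\sqrt{c(-1)^{i+j+1}}$, we get $\beta^{2m} = ((-1)^{i+j+1}4c)^m$ and, since $(-1)^{i+j+1}$ is its own reciprocal, $\beta^{-2} = (-1)^{i+j+1}/(4c)$. Hence $[\dil{\beta}\HGPS{m}{\bm b, 1-\frac{1}{2m}}{\bm a}](x) = \beta^{2m}\HGP{m}{\bm b, 1-\frac{1}{2m}}{\bm a}(\beta^{-2}x^2)$ coincides with the right-hand side above, which is precisely \eqref{eq:differential.H.square}.

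There is essentially no substantive obstacle: the argument is pure bookkeeping on top of Lemma~\ref{lem:differential.powers}. The only two points requiring care are (i) tracking the sign $(-1)^{i+j+1}$, where the key observation is simply that it is an involution and hence equals its own multiplicative inverse; and (ii) in the case $l=2$, noting that although $\sqrt{c(-1)^{i+j+1}}$ may be purely imaginary when $c(-1)^{i+j+1}<0$, the dilation $\dil{2\sqrt{c(-1)^{i+j+1}}}$ of an even polynomial is still well defined and yields a polynomial with real coefficients, since the variable enters only through $x^2$ and only $\beta^2$ appears in the formula; in particular the result is independent of the choice of square root.
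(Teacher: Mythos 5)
Your proposal is correct and matches the paper's treatment: the paper states this corollary as an immediate specialization of Lemma~\ref{lem:differential.powers} to $l=1$ and $l=2$, which is precisely the bookkeeping you carry out. Your closing remark about the sign $(-1)^{i+j+1}$ being its own inverse and the choice of square root being immaterial (since only $\beta^2$ enters) is the same point the paper makes in the comment following the corollary.
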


It is worth emphasizing that in the right-hand side of \eqref{eq:differential.H.square}, we first double the degree of the hypergeometric polynomial and then dilate it. Notice that the same polynomial can be obtained by first dilating (by the square of the constant) and then doubling the degree.

With these formulas in hand, we can now readily generalize the last part of Theorem~\ref{thm:MFMP} and relate the product of hypergeometric series (evaluated in any power of $x$) to the additive convolution of hypergeometric polynomials (evaluated on the corresponding powers of $x$). The idea is to use the definition of $\boxplus_n$ in terms of differential operators and the fact that we just proved that hypergeometric series evaluated in differential operators applied to $x^n$ yield hypergeometric polynomials. We first provide the result in its more general form, and then specialize to the cases that we are more interested in.

\begin{Theorem}[additive convolution of hypergeometric polynomials] \label{thm:add.hyper.general}
 Let $c_1,c_2,c_3\in \rr$ be constants, and let $l_1,l_2,l_3,n\in \nn$ be numbers such that $l_k$ divides $n$ for $k=1,2,3$. Consider tuples $\bm a_1$, $\bm a_2$, $\bm a_3$, $\bm b_1$, $\bm b_2$, $\bm b_3 $ of sizes $i_1,i_2,i_3,j_1,j_2,j_3 \in \nn$, and assume that
 \[ \HGF{j_1}{i_1}{-n\bm b_1}{-n\bm a_1}{c_1 x^{l_1} } \HGF{j_2}{i_2}{-n\bm b_2}{-n\bm a_2}{c_2 x^{l_2}}=\HGF{j_3}{i_3}{-n\bm b_3}{-n\bm a_3}{c_3x^{l_3}}. \]
 Then, if for $k=1,2,3$ we consider the polynomials
 \[ p_k(x)= \bigl((-1)^{i_k+j_k+1}l_k^{l_k}c_k\bigr)^{\frac{n}{l_k}} \HGP{\frac{n}{l_k}}{l_k \bm b_k, 1-\frac{1}{n}, \dots, 1-\frac{l_k-1}{n}}{l_k \bm a_k} \left( \frac{(-1)^{i_k+j_k+1}}{l_k^{l_k}c_k} x^{l_k}\right), \]
 we get that $p_1 \boxplus_n p_2 =p_3$.
\end{Theorem}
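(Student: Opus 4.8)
The plan is to reduce this statement to the differential-operator description of $\boxplus_n$ from Remark~\ref{rem:additive.conv.diff.form}, using Corollary~\ref{cor:differential.form} (more precisely, Lemma~\ref{lem:differential.powers}) to identify each $p_k$ as a hypergeometric series evaluated at a differential operator applied to $x^n$. First I would observe that for each $k=1,2,3$, Lemma~\ref{lem:differential.powers} applied with $m = n/l_k$, $l = l_k$, and constant $c_k$ gives precisely
\[
\HGF{j_k}{i_k}{-n\bm b_k}{-n\bm a_k}{c_k\left(\tfrac{\partial}{\partial x}\right)^{l_k}} x^{n} = \bigl((-1)^{i_k+j_k+1}l_k^{l_k}c_k\bigr)^{n/l_k} \HGP{n/l_k}{l_k\bm b_k, 1-\tfrac{1}{n},\dots,1-\tfrac{l_k-1}{n}}{l_k\bm a_k}\!\left(\tfrac{(-1)^{i_k+j_k+1}}{l_k^{l_k}c_k} x^{l_k}\right) = p_k(x),
\]
where one has to be careful that the tuple $(m\bm b, m-\tfrac{1}{l},\dots,m-\tfrac{l-1}{l})$ appearing in the lemma matches $(n/l_k)(l_k\bm b_k, 1-\tfrac{1}{n},\dots,1-\tfrac{l_k-1}{n})$ after clearing denominators — this is just arithmetic: $(n/l_k)(1-\tfrac{s}{n}) = n/l_k - s/l_k = m - s/l$. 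So each $p_k$ is written in the form $P_k\!\left(\tfrac{\partial}{\partial x}\right) x^n$ with $P_k(y) = \HGF{j_k}{i_k}{-n\bm b_k}{-n\bm a_k}{c_k y^{l_k}}$.

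Next I would invoke Remark~\ref{rem:additive.conv.diff.form}: if $p_1(x) = P_1\!\left(\tfrac{\partial}{\partial x}\right)x^n$ and $p_2(x) = P_2\!\left(\tfrac{\partial}{\partial x}\right)x^n$, then $p_1 \boxplus_n p_2 = P_1\!\left(\tfrac{\partial}{\partial x}\right)P_2\!\left(\tfrac{\partial}{\partial x}\right)x^n$. Here $P_1, P_2$ are power series rather than polynomials, but this causes no problem: $P_k\!\left(\tfrac{\partial}{\partial x}\right)$ acting on $x^n$ only involves the terms of degree $\le n$ (higher-order derivatives annihilate $x^n$), so each $P_k\!\left(\tfrac{\partial}{\partial x}\right) x^n$ coincides with the action of a genuine degree-$n$ polynomial operator, and the product $P_1\!\left(\tfrac{\partial}{\partial x}\right)P_2\!\left(\tfrac{\partial}{\partial x}\right)$ likewise only sees finitely many terms. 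Then the hypothesized product identity
\[
\HGF{j_1}{i_1}{-n\bm b_1}{-n\bm a_1}{c_1 x^{l_1}}\HGF{j_2}{i_2}{-n\bm b_2}{-n\bm a_2}{c_2 x^{l_2}} = \HGF{j_3}{i_3}{-n\bm b_3}{-n\bm a_3}{c_3 x^{l_3}}
\]
is an identity of formal power series in $x$; substituting $x \mapsto \tfrac{\partial}{\partial x}$ (a legitimate operation since we are only ever composing operators, and the identity holds coefficient-by-coefficient) yields $P_1\!\left(\tfrac{\partial}{\partial x}\right)P_2\!\left(\tfrac{\partial}{\partial x}\right) = P_3\!\left(\tfrac{\partial}{\partial x}\right)$ as operators on polynomials. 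Applying this to $x^n$ and using the identification from the first step gives $p_1 \boxplus_n p_2 = P_3\!\left(\tfrac{\partial}{\partial x}\right)x^n = p_3$.

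The main obstacle I anticipate is not conceptual but bookkeeping: making sure the power-series-in-a-differential-operator manipulations are rigorously justified and that the parameter tuples line up exactly. Specifically, one must check that commuting the two operators $P_1(\tfrac{\partial}{\partial x})$ and $P_2(\tfrac{\partial}{\partial x})$ and multiplying their symbols is valid — this is immediate since differential operators with constant coefficients commute and composition corresponds to multiplication of symbols — but one should state clearly that the equality of power series in $x$ transfers to equality of the induced operators, which holds because the map (formal power series in $x$) $\to$ (operators on $\cc[x]$ of the form $\sum_k c_k (\tfrac{\partial}{\partial x})^k$) sending $x^k \mapsto (\tfrac{\partial}{\partial x})^k$ is a ring homomorphism. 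A secondary point worth a sentence is that $l_k \mid n$ guarantees $m = n/l_k \in \nn$ so that Lemma~\ref{lem:differential.powers} genuinely applies. Once these are dispatched, the proof is just the two-line chain: identify $p_k = P_k(\tfrac{\partial}{\partial x})x^n$, apply the product identity to the symbols, use \eqref{thirdDefAdditiveConv}. Finally, I would note that specializing $l_1 = l_2 = l_3 = 1$ recovers Theorem~\ref{thm:MFMP}(2), which is a useful sanity check on the signs $s_l = (-1)^{i_l+j_l+1}$ appearing there and the factor $(-1)^{i_k+j_k+1}$ here.
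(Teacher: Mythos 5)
Your proposal is correct and follows essentially the same route as the paper: identify each $p_k$ as $P_k\bigl(\frac{\partial}{\partial x}\bigr)x^n$ via Lemma~\ref{lem:differential.powers} (with $m=n/l_k$, $l=l_k$, and parameter tuples $l_k\bm a_k$, $l_k\bm b_k$, so that $-m\,l_k\bm b_k=-n\bm b_k$), then conclude by the differential-operator description \eqref{thirdDefAdditiveConv} of $\boxplus_n$ together with the hypothesized product identity of the symbols. The only difference is that you spell out the justification for using power series (rather than polynomials) in $\frac{\partial}{\partial x}$, which the paper leaves implicit via Remark~\ref{rem:additive.conv.diff.form}.
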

\begin{proof}
 Fix $k \in \{ 1,2,3 \}$. From Lemma \ref{lem:differential.powers} applied to $c_k\in \rr$, integer values \smash{$i_k,j_k,\frac{n}{l_k},l_k\in \nn$}, and tuples of parameters $l_k \bm a_k \in \rr^i$ and $l_k \bm b_k \in \rr^j$, we know that a polynomial written in differential form
 \[ \HGF{j_k}{i_k}{-\frac{n}{l_k} l_k\bm b_k}{-\frac{n}{l_k} l_k\bm a_k}{c_k\left(\frac{\partial}{\partial x}\right)^{l_k}} x^{n} \]
 is precisely the polynomial
 \[ p_k(x)=\bigl((-1)^{i_k+j_k+1}l_k^{l_k}c_k\bigr)^{\frac{n}{l_k}} \HGP{\frac{n}{l_k}}{l_k\bm b_k, 1-\frac{1}{n}, \dots , 1-\frac{l_k-1}{n}}{l_k\bm a_k} \left( \frac{(-1)^{i_k+j_k+1}}{l_k^{l_k}c_k} x^{l_k}\right). \]
 Then the result follows from the definition of additive convolution using differential operators.\looseness=1
\end{proof}

\subsection{Symmetrization}
\label{ssec:hg.symm}

With the results from last section in hand, we are ready to study the symmetrization of some hypergeometric polynomials using some well-known results of products of hypergeometric functions.

\begin{Lemma}
 \label{lem:symmetrization.H}
 Consider tuples $\bm a$, $\bm a'$, $\bm b$, $\bm b'$ of sizes $i$, $i'$, $j$, $j'$, and assume that
 \begin{equation} \label{symcondition}
 \HGF{j}{i}{-2m\bm b}{-2m\bm a}{x} \HGF{j}{i}{-2m\bm b}{-2m\bm a}{ -x}=\HGF{j'}{i'}{-m \bm b'}{-m\bm a'}{c x^2}.
 \end{equation}
 Then we have
 \[
\mathrm{Sym}\left(\HGP{2m}{\bm b}{\bm a}\right)=\dil{2\sqrt{c(-1)^{i'+j'+1}}} \HGPS{m}{\bm b', 1-\frac{1}{2m}}{\bm a'}.
 \]
\end{Lemma}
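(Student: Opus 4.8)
The plan is to use the differential-operator description of $\boxplus_{2m}$ together with Corollary~\ref{cor:differential.form}. First I would write the symmetrization in differential form. By Remark~\ref{rem:additive.conv.diff.form} and the hypergeometric-series identity recalled after Definition~\ref{def:H.polynomials}, the polynomial $\HGP{2m}{\bm b}{\bm a}$ can be written as $\HGF{j}{i}{-2m\bm b}{-2m\bm a}{s\,\partial/\partial x}\,x^{2m}$ for the appropriate sign $s = (-1)^{i+j+1}$ coming from Corollary~\ref{cor:differential.form} with $l=1$ (i.e.\ $\dil{s}$ applied to a hypergeometric polynomial gives back the one with the differential form having constant $1$). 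Consequently the dilation $\dil{-1}\HGP{2m}{\bm b}{\bm a}$ has the differential form with $\partial/\partial x$ replaced by $-\partial/\partial x$ inside the hypergeometric series, i.e.\ constant $-s$.

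Next I would apply the differential form of additive convolution~\eqref{thirdDefAdditiveConv}:
\[
\mathrm{Sym}\!\left(\HGP{2m}{\bm b}{\bm a}\right)
= \HGF{j}{i}{-2m\bm b}{-2m\bm a}{s\tfrac{\partial}{\partial x}}\;\HGF{j}{i}{-2m\bm b}{-2m\bm a}{-s\tfrac{\partial}{\partial x}}\; x^{2m}.
\]
Now I would invoke hypothesis~\eqref{symcondition}: since $\HGF{j}{i}{-2m\bm b}{-2m\bm a}{y}\,\HGF{j}{i}{-2m\bm b}{-2m\bm a}{-y} = \HGF{j'}{i'}{-m\bm b'}{-m\bm a'}{c y^2}$ as formal power series in $y$, we may substitute $y = s\,\partial/\partial x$ (the substitution is legitimate because everything in sight is a polynomial in $\partial/\partial x$ once applied to $x^{2m}$, and $s^2=1$ so $c y^2 = c(\partial/\partial x)^2$). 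This turns the right-hand side into $\HGF{j'}{i'}{-m\bm b'}{-m\bm a'}{c(\partial/\partial x)^2}\,x^{2m}$.

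Finally I would apply Corollary~\ref{cor:differential.form}, equation~\eqref{eq:differential.H.square}, with constant $c$ and tuples $\bm a'$, $\bm b'$ of sizes $i'$, $j'$: this identifies the last expression as $\dil{2\sqrt{c(-1)^{i'+j'+1}}}\HGPS{m}{\bm b', 1-\frac{1}{2m}}{\bm a'}$, which is exactly the claimed identity. I do not expect a serious obstacle here; the only point requiring a little care is the justification that one may substitute a differential operator into the product identity~\eqref{symcondition} — this needs the observation that both sides of~\eqref{symcondition}, when the variable is replaced by $s\,\partial/\partial x$ and applied to $x^{2m}$, are finite sums (terms with $(\partial/\partial x)^{k}$ for $k>2m$ vanish), so the manipulation is purely algebraic and the sign bookkeeping $s^2 = 1$ makes the two differential operators agree termwise.
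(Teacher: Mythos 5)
Your proof is correct and follows essentially the same route as the paper: the paper simply invokes its Theorem~\ref{thm:add.hyper.general} with $n=2m$, $l_1=l_2=1$, $l_3=2$, $c_1=1$, $c_2=-1$, $c_3=c$, and that theorem is itself proved by exactly the differential-operator argument you spell out (the representation of $\boxplus_{2m}$ via \eqref{thirdDefAdditiveConv}, substitution of $\partial/\partial x$ into the product identity, and conversion back via Lemma~\ref{lem:differential.powers}/Corollary~\ref{cor:differential.form}). Your sign bookkeeping with $s=(-1)^{i+j+1}$ and the remark that the substitution is legitimate because all series truncate when applied to $x^{2m}$ are both accurate.
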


\begin{proof}
The result follows from applying Theorem~\ref{thm:add.hyper.general} to the particular case where $n=2m$, $l_1=l_2=1$, $l_3=2$, $c_1=1$, $c_2=-1$, $c_3=c$, $\bm a_1=\bm a_2=\bm a$, and $\bm b_1=\bm b_2=\bm b$.
\end{proof}

Using this result, we can compute the symmetrization of certain hypergeometric polynomials using product identities for hypergeometric series of the form \eqref{symcondition}. Some of these formulas are elementary, like the product of binomial functions or the product of Bessel functions, while more involved ones can be found in works of Ramanujan, Preece, and Bailey. We use Grinshpan's survey \cite{grinshpan2013generalized} as a convenient reference. Specifically, in Proposition~\ref{prop:prod.hg.series.sym}, we reproduce equations~(9), (19), (21), (20), and (23) from \cite{grinshpan2013generalized} as equations \eqref{eq.Gri.9}--\eqref{eq.Gri.23}, respectively.

\begin{Proposition}[product of hypergeometric series]
 \label{prop:prod.hg.series.sym}
 Given real parameters $m$, $a$, $b$, $c$, $d$, the following identities hold:
 \begin{gather} 
 \HGF{1}{0}{-2mb}{\cdot}{x} \HGF{1}{0}{-2mb}{\cdot}{ -x}=\HGF{1}{0}{-2mb}{\cdot}{x^2} ;\label{eq.Gri.9}
\\
 \HGF{0}{1}{\cdot}{-2ma}{x} \HGF{0}{1}{\cdot}{-2ma}{-x}
 =\HGF{0}{3}{\cdot}{-2ma, -ma, -ma+\frac{1}{2}}{-\frac{x^2}{4}} ;
\\
 \HGF{1}{1}{-2mb}{-2ma}{x} \HGF{1}{1}{-2mb}{-2ma}{-x}
 =\HGF{2}{3}{-2mb, -2ma+2mb}{-2ma, -ma, -ma+\frac{1}{2}}{\frac{x^2}{4}} ;
\\
 \HGF{2}{0}{-2mb, -2md}{\cdot}{x} \HGF{2}{0}{-2mb, -2md}{\cdot}{-x}\nonumber
\\
\qquad
 =\HGF{4}{1}{-2mb, -2md, -m(b+d), -m(b+d)+\frac{1}{2}}{-2mb-2md}{4x^2} ;
\\
 \HGF{0}{2}{\cdot}{a, c}{x} \HGF{0}{2}{\cdot}{a, c}{-x}
 =\HGF{3}{8}{\frac{a+c-1}{3}, \frac{a+c}{3}, \frac{a+c+1}{3}}{a, c, \frac{a}{2}, \frac{c}{2}, \frac{a+1}{2}, \frac{c+1}{2}, \frac{a+c-1}{2}, \frac{a+c}{2}}{-\frac{27x^2}{64}}.\label{eq.Gri.23}
 \end{gather}
\end{Proposition}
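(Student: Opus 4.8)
The plan is to recognize each of the five identities as a specialization of a product formula for generalized hypergeometric series recorded in Grinshpan's survey \cite{grinshpan2013generalized} -- specifically its equations (9), (19), (21), (20), and (23), in that order. I would organize the proof around the elementary structural remark that for any generalized hypergeometric series $F$, the product $F(x)\,F(-x)$ is an even function of $x$, hence a power series in $x^2$; the content of each cited formula is to identify that even series in closed form as (a rescaling of) another generalized hypergeometric series, and our only task is the parameter bookkeeping needed to put it in the stated form.

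First I would dispose of \eqref{eq.Gri.9}, which is elementary: since $\HGF{1}{0}{\alpha}{\cdot}{x}=(1-x)^{-\alpha}$, the left-hand side is $(1-x)^{2mb}(1+x)^{2mb}=(1-x^2)^{2mb}=\HGF{1}{0}{-2mb}{\cdot}{x^2}$. For the ${}_0F_1$ identity I would quote the classical product formula for Bessel-type (confluent-limit) functions, \cite[equation~(19)]{grinshpan2013generalized}; for the ${}_1F_1$ identity, Preece's product formula for confluent hypergeometric functions, \cite[equation~(21)]{grinshpan2013generalized}; and for the ${}_2F_0$ identity, the corresponding formula for the divergent (asymptotic) series, \cite[equation~(20)]{grinshpan2013generalized}. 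In each case one sets the upper and lower parameter tuples of the two factors equal to each other and replaces $x$ by $-x$ in the second factor.

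The passage from those general formulas to the half-integer-shifted parameters and rescaled arguments that appear on our right-hand sides -- the pairs $-ma,\,-ma+\tfrac12$ and $-m(b+d),\,-m(b+d)+\tfrac12$, and the arguments $-x^2/4$, $x^2/4$, and $4x^2$ -- is a routine computation using the duplication relation $\falling{2a}{2k}=2^{2k}\falling{a}{k}\falling{a-\frac{1}{2}}{k}$ from \eqref{eq.falling.factorial.2m}, exactly in the spirit of the coefficient check in the proof of Proposition~\ref{prop.multiplicative.halving}. The remaining identity, the ${}_0F_2$ one, is the most substantial: it is a cubic product identity of Ramanujan--Bailey type, \cite[equation~(23)]{grinshpan2013generalized}, which is precisely what forces the argument $-27x^2/64$ and the splitting of $a+c$ into the thirds $\frac{a+c-1}{3},\frac{a+c}{3},\frac{a+c+1}{3}$ and the halves $\frac{a}{2},\frac{c}{2},\dots$; here I would simply quote the formula.

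The main obstacle is not in any of our manipulations -- which never go beyond equating parameters, substituting $-x$, and keeping track of powers of $2$, $4$, and $27/64$ -- but in the fact that the product formulas being specialized (Preece's, Bailey's, and especially the cubic Ramanujan--Bailey identity) are themselves deep classical results. The honest strategy is therefore to treat these as known, cite Grinshpan's survey, and be careful only that our parameters, which always occur as integer multiples of $m$, are substituted into the general formulas correctly.
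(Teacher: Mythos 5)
Your proposal is correct and matches the paper's treatment: the paper gives no proof of this proposition beyond stating that it reproduces equations (9), (19), (21), (20), and (23) of Grinshpan's survey with the parameters specialized, which is exactly the citation-plus-bookkeeping strategy you describe (your elementary derivation of the binomial case is a small bonus the paper omits).
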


Using Proposition~\ref{prop:prod.hg.series.sym} and Lemma \ref{lem:symmetrization.H}, we can compute the symmetrizations of various classical polynomials, such as Laguerre and Bessel polynomials, as well as their multiplicative convolutions. We collect these results in Table~\ref{tab:sym.hgp}.

\begin{table}[ht]\renewcommand{\arraystretch}{1.8}
 \centering
 \begin{tabular}{|c|c|c|}
 \hline
 Polynomial & $p$ & $\mathrm{Sym}(p)$ \\
 \hline
 Laguerre & $\HGP{2m}{b}{\cdot}$ & $\dil{2} \HGPS{m}{2b, 1-\frac{1}{2m}}{\cdot}$ \\
 \hline
 Bessel & $\HGP{2m}{\cdot}{a}$ & $\dil{i} \HGPS{m}{1-\frac{1}{2m}}{2a, a, a-\frac{1}{2m}}$ \\
 \hline
 Jacobi & $\HGP{2m}{b}{a}$ & $\HGPS{m}{1-\frac{1}{2m}, 2b, 2a-2b}{2a, a, a-\frac{1}{2m}}$ \\
 \hline
 \small Lag $\boxtimes_{2m}$ Lag & $\HGP{2m}{b, d}{\cdot}$ & $\dil{4} \HGPS{m}{1-\frac{1}{2m}, 2b, 2d, b+d, b+d-\frac{1}{2m}}{2b+2d}$ \\
 \hline
 \small Bes $\boxtimes_{2m}$ Bes & $\HGP{2m}{\cdot}{a, c}$ & \small $\dil{\frac{8i}{3\sqrt{3}}} \HGPS{m}{\frac{2}{3}\left(a+c+\frac{1}{2m}\right), \frac{2}{3}\left(a+c\right), \frac{2}{3}\left(a+c-\frac{1}{2m}\right), 1-\frac{1}{2m} }{2a, 2c, a, c, a-\frac{1}{2m}, c-\frac{1}{2m}, a+c+\frac{1}{2m}, a+c}$ \\
 \hline
 \end{tabular}

 \caption{Symmetrization of hypergeometric polynomials. Notice that here $i := \sqrt{-1}$ is a complex number. }
 \label{tab:sym.hgp}
\end{table}

We should emphasize that in the last row of Table~\ref{tab:sym.hgp}, the case corresponding to the multiplicative convolution of two Bessel polynomials, one must do the change of variable $a'=-2ma$ and $b'=-2bm$ in equation \eqref{eq.Gri.23} before applying Lemma \ref{lem:symmetrization.H}. Notice also, that in this case, if we let $a+c=\frac{-3}{2m}$, the formula simplifies to
\[
\mathrm{Sym}\left(\HGP{2m}{\cdot}{a, c}\right) = \dil{\frac{8i}{3\sqrt{3}}} \HGPS{m}{\frac{-4}{3m}, 1-\frac{1}{2m}}{2a, 2c, a, c, a-\frac{1}{2m}, c-\frac{1}{2m}} ,
\]
where $i := \sqrt{-1}$ is a complex number.

\subsection{Multiplicative convolution}

The multiplicative convolution of two even hypergeometric polynomials has a very nice expression that follows from Proposition~\ref{prop.multiplicative.halving} and Theorem~\ref{thm:MFMP}.

\begin{Proposition}[multiplicative convolution of even hypergeometric polynomials]
 \label{prop:EvenHyperMultConv}
 Consider tuples $\bm a_1$, $\bm a_2$, $\bm b_1$, $\bm b_2$ of sizes $i_1$, $i_2$, $j_1$, $j_2$. Then
 \[ \HGPS{m}{\bm b_1}{\bm a_1 }\boxtimes_{2m}\HGPS{m}{\bm b_2}{\bm a_2}= \HGPS{m}{-\frac{1}{2m}, \bm b_1 , \bm b_2 }{1-\frac{1}{2m}, \bm a_1, \bm a_2}. \]
\end{Proposition}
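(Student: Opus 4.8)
The plan is to combine the degree-halving identity for multiplicative convolution from Proposition~\ref{prop.multiplicative.halving} with the multiplicative identity for hypergeometric polynomials from Theorem~\ref{thm:MFMP}(1), translating everything through the bijection $\lift_m$/$\halfeven_m$. Recall that by definition $\HGPS{m}{\bm b}{\bm a} = \lift_m\bigl(\HGP{m}{\bm b}{\bm a}\bigr)$, and $\halfeven_m$ inverts $\lift_m$ on the relevant sets (Fact~\ref{fact:basic.Q}, or just on the level of coefficients via Notation~\ref{nota:DegreeHalving}).

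First I would apply $\halfeven_m$ to the left-hand side $\HGPS{m}{\bm b_1}{\bm a_1} \boxtimes_{2m} \HGPS{m}{\bm b_2}{\bm a_2}$. By Proposition~\ref{prop.multiplicative.halving}, this equals
\[
\halfeven_m\bigl(\HGPS{m}{\bm b_1}{\bm a_1}\bigr) \boxtimes_m \halfeven_m\bigl(\HGPS{m}{\bm b_2}{\bm a_2}\bigr) \boxtimes_m \HGP{m}{-\frac{1}{2m}}{1-\frac{1}{2m}}.
\]
Since $\halfeven_m\bigl(\HGPS{m}{\bm b_l}{\bm a_l}\bigr) = \HGP{m}{\bm b_l}{\bm a_l}$ for $l=1,2$, the right-hand side becomes $\HGP{m}{\bm b_1}{\bm a_1} \boxtimes_m \HGP{m}{\bm b_2}{\bm a_2} \boxtimes_m \HGP{m}{-\frac{1}{2m}}{1-\frac{1}{2m}}$. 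Next I would invoke Theorem~\ref{thm:MFMP}(1) repeatedly to collapse this threefold multiplicative convolution of hypergeometric polynomials into a single one, concatenating the upper and lower parameter tuples: the result is $\HGP{m}{-\frac{1}{2m},\, \bm b_1,\, \bm b_2}{1-\frac{1}{2m},\, \bm a_1,\, \bm a_2}$. Finally, applying $\lift_m$ to both sides and using that $\lift_m \circ \halfeven_m$ restricted to $\P_{2m}^E$ is the identity (the original left-hand side is even of degree $2m$, being a multiplicative convolution of even polynomials, cf.\ Remark~\ref{rem:convolutions.even}), I recover $\HGPS{m}{-\frac{1}{2m},\, \bm b_1,\, \bm b_2}{1-\frac{1}{2m},\, \bm a_1,\, \bm a_2}$ on the right, which is exactly the claimed expression.

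There is essentially no hard obstacle here; the proof is a direct chaining of two already-established results. The one point requiring a small amount of care is the bookkeeping with the bijection: one must be sure that $\lift_m$ can legitimately be applied to undo $\halfeven_m$, which is valid precisely because $\HGPS{m}{\bm b_1}{\bm a_1}\boxtimes_{2m}\HGPS{m}{\bm b_2}{\bm a_2}$ lies in $\P_{2m}^E$ (so no information about odd coefficients is lost). Alternatively, and perhaps more cleanly, one can avoid the bijection subtlety entirely by working with equation~\eqref{eq:coeff.even.mult.lift}: write each factor as $\lift_m$ of a hypergeometric polynomial, apply that identity, and then combine the three $\boxtimes_{2m}$-factors back up using that $\lift_m$ intertwines $\boxtimes_m$ with $\boxtimes_{2m}$ up to the correction polynomial — but in fact the cleanest route is just to verify the coefficient identity directly from Notation~\ref{not:even.hgp} and Definition~\ref{def:FinFreeConv}, which is the approach I would actually write out if the chaining argument feels too terse.
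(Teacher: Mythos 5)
Your proof is correct and is exactly the argument the paper intends: the paper gives no written proof but states that the proposition ``follows from Proposition~\ref{prop.multiplicative.halving} and Theorem~\ref{thm:MFMP},'' which is precisely your chain of applying $\halfeven_m$, invoking equation~\eqref{eq:coeff.even.mult}, merging the three hypergeometric factors via Theorem~\ref{thm:MFMP}(1), and lifting back with $\lift_m$ (justified since the left-hand side is even by Remark~\ref{rem:convolutions.even}).
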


\begin{Example}[multiplicative convolution of two Bernoulli polynomials]
 \label{exm:product.Bernoulli}
 By Proposition~\ref{prop:EvenHyperMultConv}, the multiplicative convolution of two Bernoulli polynomials $B_{2m}$ from Example \ref{exm:Bernoulli} is given by
 \[ B_{2m}\boxtimes_{2m}B_{2m}= \HGPS{m}{-\frac{1}{2m}}{1-\frac{1}{2m}}. \]
 For $m>1$, this polynomial is not necessarily real-rooted; instead we can conclude that all the roots lie in the unit circle $\tt:=\{z\in \cc\mid |z|=1\}$. Indeed, since $B_{2m}$ has roots only at $1$ and~$-1$, then $B_{2m} \in \P_{2m}(\tt)$. Since $\P_{2m}(\tt)$ is closed under multiplicative convolution (see Lemma~\ref{lem:multiplicative.unit.circle}), we conclude
 \[ B_{2m} \boxtimes_{2m} B_{2m} = \HGPS{m}{-\frac{1}{2m}}{1-\frac{1}{2m}} \in \P_{2m}(\tt). \]
\end{Example}

\begin{Example}[multiplicative convolution of two Hermite polynomials] \label{exm:product.Hermite}
 The multiplicative convolution of two Hermite polynomials $H_{2m}$ from Example \ref{exm:Hermite} is given by
 \[ H_{2m}\boxtimes_{2m}H_{2m}= \dil{\frac{1}{m}} \HGPS{m}{-\frac{1}{2m}, 1-\frac{1}{2m}}{\cdot}. \]
 This polynomial is not real-rooted in general.
\end{Example}

\subsection{Additive convolution}

The additive convolution of even hypergeometric polynomials can now be described as a particular instance of Theorem~\ref{thm:add.hyper.general}, where we let all the powers to be squares.

\begin{Proposition}[additive convolution of even hypergeometric polynomials] \label{prop:EvenAdd}
 Consider tuples $\bm a_1$, $\bm a_2$, $\bm a_3$, $\bm b_1$, $\bm b_2$, $\bm b_3 $ of sizes $i_1$, $i_2$, $i_3$, $j_1$, $j_2$, $j_3$, and assume that
 \begin{equation} \label{eq:assumption.add.even.hyper}
 \HGF{j_1}{i_1}{-m\bm b_1}{-m\bm a_1}{c_1 x} \HGF{j_2}{i_2}{-m\bm b_2}{-m\bm a_2}{ c_2x}=\HGF{j_3}{i_3}{-m \bm b_3}{-m\bm a_3}{c_3x}.
 \end{equation}
 Then, if we let \smash{$s_k=\sqrt{c_k(-1)^{i_k+j_k+1}}$} for $k=1,2,3$, we have
 \begin{equation} \label{eq:add.even.hyper}
 \dil{s_1} \HGPS{m}{\bm b_1, 1-\frac{1}{2m}}{\bm a_1} \boxplus_{2m} \dil{s_2} \HGPS{m}{\bm b_2, 1-\frac{1}{2m}}{\bm a_2} =\dil{s_3} \HGPS{m}{\bm b_3, 1-\frac{1}{2m}}{\bm a_3}.
 \end{equation}
\end{Proposition}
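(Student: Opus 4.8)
The plan is to derive Proposition~\ref{prop:EvenAdd} directly from Theorem~\ref{thm:add.hyper.general} by specializing all the powers $l_k$ to be $2$, exactly as Lemma~\ref{lem:symmetrization.H} was obtained by specializing to the case $l_3 = 2$ with $l_1 = l_2 = 1$. Here, however, we want all three factors to be even polynomials in $x$, so the natural choice is $l_1 = l_2 = l_3 = 2$. We then need the divisibility hypothesis of Theorem~\ref{thm:add.hyper.general}: with $n = 2m$ and each $l_k = 2$, indeed $l_k \mid n$, so the theorem applies once we know the corresponding product identity for hypergeometric series.

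First I would rewrite the assumption \eqref{eq:assumption.add.even.hyper} so that it matches the hypothesis of Theorem~\ref{thm:add.hyper.general} with $n = 2m$ and $l_k = 2$. The theorem's hypothesis in that case reads
\[
\HGF{j_1}{i_1}{-2m\bm b_1}{-2m\bm a_1}{c_1 x^{2}} \HGF{j_2}{i_2}{-2m\bm b_2}{-2m\bm a_2}{c_2 x^{2}} = \HGF{j_3}{i_3}{-2m\bm b_3}{-2m\bm a_3}{c_3 x^{2}},
\]
which is obtained from \eqref{eq:assumption.add.even.hyper} by the substitution $x \mapsto x^2$ (and relabeling the parameter tuples so that $m\bm a_k$ becomes $2m \bm a_k'$, i.e.\ absorbing the factor of $2$; one must be a little careful here, since in \eqref{eq:assumption.add.even.hyper} the scaling is by $m$ rather than $2m$). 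The cleanest bookkeeping is to note that $\HGF{j_k}{i_k}{-m\bm b_k}{-m\bm a_k}{c_k x}$ equals $\HGF{j_k}{i_k}{-2m(\bm b_k/2)}{-2m(\bm a_k/2)}{c_k x}$, so applying Theorem~\ref{thm:add.hyper.general} with parameter tuples $\bm b_k/2$, $\bm a_k/2$, power $l_k = 2$, and $n = 2m$ produces polynomials
\[
p_k(x) = \bigl((-1)^{i_k+j_k+1} 4 c_k\bigr)^{m} \HGP{m}{\bm b_k, 1 - \tfrac{1}{2m}}{\bm a_k}\!\left(\frac{(-1)^{i_k+j_k+1}}{4 c_k} x^{2}\right)
\]
satisfying $p_1 \boxplus_{2m} p_2 = p_3$, where I have used $2\bm b_k/2 = \bm b_k$ (so the extra entry coming from the $l=2$ case is $1 - \tfrac{1}{2m}$, matching the claim) and the fact that $l_k^{l_k} = 4$. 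Finally, by Corollary~\ref{cor:differential.form}, equation~\eqref{eq:differential.H.square}, each $p_k$ is precisely $\dil{2\sqrt{c_k(-1)^{i_k+j_k+1}}} \HGPS{m}{\bm b_k, 1-\frac{1}{2m}}{\bm a_k}$, i.e.\ $\dil{s_k}$ applied to the even hypergeometric polynomial with the extra parameter, which is exactly the content of \eqref{eq:add.even.hyper}.

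The only real subtlety — and the step I would flag as the potential obstacle — is the bookkeeping of the scaling factor on the parameters: in \eqref{eq:assumption.add.even.hyper} the series are written with $-m\bm a_k$ and $-m\bm b_k$, whereas Theorem~\ref{thm:add.hyper.general} with target degree $n = 2m$ expects $-n\bm a_k = -2m\bm a_k$. One must therefore either halve the parameter tuples before applying the theorem (as sketched above) or, equivalently, reindex so the "$m$" in the even hypergeometric polynomial notation lines up with the "$n/l_k = m$" appearing in the statement of Theorem~\ref{thm:add.hyper.general}. Getting the factor of $2$ in the right place — in particular checking that the entry $1 - \tfrac{l_k-1}{n} = 1 - \tfrac{1}{2m}$ comes out correctly rather than $1 - \tfrac{1}{4m}$ or similar — is where a careless reader would slip, but it is a routine verification once the substitution is set up. Everything else is a direct quotation of Theorem~\ref{thm:add.hyper.general} and Corollary~\ref{cor:differential.form}.

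\begin{proof}
Apply Theorem~\ref{thm:add.hyper.general} with $n = 2m$, $l_1 = l_2 = l_3 = 2$, constants $c_1$, $c_2$, $c_3$, and parameter tuples $\tfrac{1}{2}\bm a_k$, $\tfrac{1}{2}\bm b_k$ in place of $\bm a_k$, $\bm b_k$. Since $\HGF{j_k}{i_k}{-m\bm b_k}{-m\bm a_k}{c_k x} = \HGF{j_k}{i_k}{-2m\cdot\frac{1}{2}\bm b_k}{-2m\cdot\frac{1}{2}\bm a_k}{c_k x}$, the substitution $x \mapsto x^2$ in \eqref{eq:assumption.add.even.hyper} gives exactly the hypothesis of Theorem~\ref{thm:add.hyper.general}. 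The resulting polynomials are
\[
p_k(x) = \bigl((-1)^{i_k+j_k+1} 2^{2} c_k\bigr)^{m} \HGP{m}{\bm b_k, 1-\frac{1}{2m}}{\bm a_k}\!\left(\frac{(-1)^{i_k+j_k+1}}{2^{2} c_k}\, x^{2}\right),
\]
using that $2 \cdot \tfrac{1}{2}\bm b_k = \bm b_k$, $2 \cdot \tfrac{1}{2}\bm a_k = \bm a_k$, $l_k^{l_k} = 4$, and $\tfrac{n}{l_k} = m$, and that the extra parameters $1 - \tfrac{1}{n}, \dots, 1 - \tfrac{l_k-1}{n}$ reduce to the single entry $1 - \tfrac{1}{2m}$; Theorem~\ref{thm:add.hyper.general} yields $p_1 \boxplus_{2m} p_2 = p_3$. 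On the other hand, by Corollary~\ref{cor:differential.form}, equation~\eqref{eq:differential.H.square} (with $c = c_k$, $m = m$, and parameter tuples $\bm a_k$, $\bm b_k$), we have
\[
p_k(x) = \dil{2\sqrt{c_k(-1)^{i_k+j_k+1}}} \HGPS{m}{\bm b_k, 1-\frac{1}{2m}}{\bm a_k}(x) = \dil{s_k} \HGPS{m}{\bm b_k, 1-\frac{1}{2m}}{\bm a_k}(x).
\]
Substituting these expressions into $p_1 \boxplus_{2m} p_2 = p_3$ gives \eqref{eq:add.even.hyper}.
\end{proof}
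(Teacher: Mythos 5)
Your strategy is exactly the paper's: apply Theorem~\ref{thm:add.hyper.general} with $n=2m$, $l_1=l_2=l_3=2$, and the halved parameter tuples $\tfrac{1}{2}\bm a_k$, $\tfrac{1}{2}\bm b_k$, then identify the resulting $p_k$ as dilated even hypergeometric polynomials. The bookkeeping you were worried about (the extra parameter coming out as $1-\tfrac{1}{2m}$, the halving of the tuples) is all handled correctly.

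However, there is a concrete slip in your final display, and it is precisely the factor of $2$ you flagged as the danger point. Feeding the constants $c_k$ (rather than $c_k/4$) into Theorem~\ref{thm:add.hyper.general} gives
\[
p_k(x) = \bigl((-1)^{i_k+j_k+1}\,4c_k\bigr)^{m}\, \HGP{m}{\bm b_k, 1-\frac{1}{2m}}{\bm a_k}\!\left(\frac{(-1)^{i_k+j_k+1}}{4c_k}\,x^{2}\right) = \dil{2\sqrt{c_k(-1)^{i_k+j_k+1}}} \HGPS{m}{\bm b_k, 1-\frac{1}{2m}}{\bm a_k}(x),
\]
which is $\dil{2s_k}$ of the even hypergeometric polynomial, \emph{not} $\dil{s_k}$; your asserted equality $\dil{2\sqrt{c_k(-1)^{i_k+j_k+1}}}\HGPS{m}{\bm b_k, 1-\frac{1}{2m}}{\bm a_k} = \dil{s_k}\HGPS{m}{\bm b_k, 1-\frac{1}{2m}}{\bm a_k}$ is false. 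The conclusion you actually obtain is $\dil{2s_1}(\cdot)\boxplus_{2m}\dil{2s_2}(\cdot)=\dil{2s_3}(\cdot)$. This is easily repaired: since dilation distributes over $\boxplus_{2m}$, applying $\dil{1/2}$ to both sides recovers \eqref{eq:add.even.hyper}. Alternatively, do what the paper does and evaluate the assumption \eqref{eq:assumption.add.even.hyper} at $\tfrac{x^2}{4}$ so that the constants entering Theorem~\ref{thm:add.hyper.general} are $\tfrac{c_k}{4}$; then $l_k^{l_k}\cdot\tfrac{c_k}{4}=c_k$ and the dilation comes out to exactly $s_k$ with no rescaling step. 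Either fix is one line, but as written the last equation of your proof does not hold.
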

\begin{proof}
 If we evaluate the assumption \eqref{eq:assumption.add.even.hyper} in $\frac{x^2}{4}$, we get
 \[ \HGF{j_1}{i_1}{-m\bm b_1}{-m\bm a_1}{c_1 \frac{x^2}{4}} \HGF{j_2}{i_2}{-m\bm b_2}{-m\bm a_2}{ c_2\frac{x^2}{4}}=\HGF{j_3}{i_3}{-m \bm b_3}{-m\bm a_3}{c_3\frac{x^2}{4}}. \]
 Then we can use Theorem~\ref{thm:add.hyper.general} with constants $\frac{c_1}{4},\frac{c_2}{4},\frac{c_3}{4}\in \rr$, integers $l_1=l_2=l_3=m$ and~${n=2m}$, and parameters $\frac{1}{2}\bm a_k$, $\frac{1}{2}\bm b_k$ for $k=1,2,3$. This gives $p_1\boxplus_{2m} p_2=p_3$, where
 \[ p_k=\dil{s_k} \HGPS{m}{\bm b_k, 1-\frac{1}{2m}}{\bm a_k}. \]
 Then equation \eqref{eq:add.even.hyper} follows from scaling the polynomials.
\end{proof}

\begin{Remark} Notice from Proposition~\ref{prop:rectangular.convolution} that the even parts of these polynomials can be related using the rectangular convolution $\boxplus_m^{-1/2}$.
\end{Remark}

Similar to Section~\ref{ssec:hg.symm}, we can compute the additive convolutions of certain hypergeometric polynomials using product identities of hypergeometric series that fit into the form of equation~\eqref{eq:assumption.add.even.hyper}. We again use Grinshpan's survey \cite{grinshpan2013generalized}. Specifically, in Proposition~\ref{prop:prod.hg.series.sum}, we reproduce equations (18), (7), (8), (10), and (11) from \cite{grinshpan2013generalized} as equations \eqref{eq.Gri.18}--\eqref{eq.Gri.11}, respectively.

\begin{Proposition}[product of hypergeometric series]\label{prop:prod.hg.series.sum}
 For real parameters $m$, $a$, $a_1$, $a_2$, $b_1$, $b_2$, $c_1$, and $c_2$, the following identities hold
 \begin{gather}
 \HGF{0}{1}{\cdot}{-ma_1}{ x} \HGF{0}{1}{\cdot}{-ma_2}{x}
 = \HGF{2}{3}{\frac{-m(a_1+a_2)}{2},\frac{-m(a_1+a_2)-1}{2}}{-ma_1,-ma_2,-m(a_1+a_2)-1}{ 4x} ; \label{eq.Gri.18}
\\
 \HGF{0}{0}{\cdot}{\cdot}{-c_1 x} \HGF{0}{0}{\cdot}{\cdot}{-c_2 x}=\HGF{0}{0}{\cdot}{\cdot}{-(c_1 + c_2) x} ;
\\
 \HGF{1}{0}{-mb_1}{\cdot}{ x} \HGF{1}{0}{-mb_2}{\cdot}{x}=\HGF{1}{0}{-m(b_1+b_2)}{\cdot}{ x} ;
\\
 \HGF{1}{0}{-m(b_1+b_2-a)}{\cdot}{ x} \HGF{2}{1}{-m(a-b_1),-m(a-b_2)}{-ma}{x}\nonumber\\
 \qquad {} =\HGF{2}{1}{-mb_1,-mb_2}{-ma}{ x} ;
\\
 \left[\HGF{2}{1}{-mb_1,-mb_2}{-m(b_1+b_2)+1/2}{ x}\right]^2
 =\HGF{3}{2}{-2mb_1,-m(b_1+b_2),-2mb_2}{-m(b_1+b_2)+\frac{1}{2},-2m(b_1+b_2)}{ x}.\label{eq.Gri.11}
 \end{gather}
\end{Proposition}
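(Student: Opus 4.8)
The plan is to read \eqref{eq.Gri.18}--\eqref{eq.Gri.11} as a list of five classical product identities for generalized hypergeometric series and to verify each one separately, handling the elementary ones together.

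First, the two ``trivial'' identities---the product of two ${}_0F_0$ series and the product of two ${}_1F_0$ series---are nothing but the functional equations $e^{-c_1x}e^{-c_2x}=e^{-(c_1+c_2)x}$ and $(1-x)^{mb_1}(1-x)^{mb_2}=(1-x)^{m(b_1+b_2)}$, using ${}_0F_0(;;x)=e^{x}$ and ${}_1F_0(-mb;;x)=(1-x)^{mb}$. The mixed identity, in which a ${}_1F_0$ multiplies a ${}_2F_1$, is Euler's transformation ${}_2F_1(\alpha,\beta;\gamma;x)=(1-x)^{\gamma-\alpha-\beta}\,{}_2F_1(\gamma-\alpha,\gamma-\beta;\gamma;x)$ applied with $\alpha=-mb_1$, $\beta=-mb_2$, $\gamma=-ma$, after writing the prefactor $(1-x)^{\gamma-\alpha-\beta}=(1-x)^{m(b_1+b_2-a)}$ as ${}_1F_0(-m(b_1+b_2-a);;x)$.

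For \eqref{eq.Gri.18}, the plan is a direct comparison of Taylor coefficients. Put $b=-ma_1$ and $b'=-ma_2$; the coefficient of $x^n$ on the left is $\sum_{k=0}^{n}\bigl(\raising{b}{k}\,k!\,\raising{b'}{n-k}\,(n-k)!\bigr)^{-1}$. Extracting the factor $\bigl(\raising{b'}{n}\,n!\bigr)^{-1}$ and using $\raising{b'}{n}/\raising{b'}{n-k}=\falling{b'+n-1}{k}=(-1)^k\raising{1-b'-n}{k}$ rewrites the sum as the terminating series ${}_2F_1(-n,1-b'-n;b;1)$, which the Chu--Vandermonde summation evaluates to $\raising{b+b'+n-1}{n}/\raising{b}{n}$. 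Then $\raising{b+b'+n-1}{n}=\raising{b+b'-1}{2n}/\raising{b+b'-1}{n}$ together with the duplication formula $\raising{b+b'-1}{2n}=4^{n}\raising{\frac{b+b'-1}{2}}{n}\raising{\frac{b+b'}{2}}{n}$ turns the result into precisely the coefficient of $x^n$ in the ${}_2F_3$ on the right, evaluated at $4x$.

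The genuine obstacle is \eqref{eq.Gri.11}: the square of a ${}_2F_1$ whose bottom parameter has the special value $-m(b_1+b_2)+\frac{1}{2}$. This is exactly \emph{Clausen's formula}, ${}_2F_1(\alpha,\beta;\alpha+\beta+\frac{1}{2};x)^2={}_3F_2(2\alpha,2\beta,\alpha+\beta;\alpha+\beta+\frac{1}{2},2\alpha+2\beta;x)$ with $\alpha=-mb_1$ and $\beta=-mb_2$; it has no term-by-term proof, and the standard argument verifies that both sides satisfy the same third-order linear differential equation and agree to high enough order at $x=0$. Since all five identities are collected with proofs and references in Grinshpan's survey \cite{grinshpan2013generalized} (as its equations (18), (7), (8), (10), and (11)), citing that source suffices; the remarks above record how each one is obtained.
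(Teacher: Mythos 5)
The paper offers no proof of this proposition beyond citing equations (18), (7), (8), (10), and (11) of Grinshpan's survey, and your proposal ultimately rests on the same citation, so the two approaches coincide. Your supplementary verifications are all correct and correctly matched to the stated parameters: the exponential and binomial functional equations for the ${}_0F_0$ and ${}_1F_0$ products, Euler's transformation for the mixed ${}_1F_0\cdot{}_2F_1$ identity, the Chu--Vandermonde plus duplication computation for the ${}_0F_1$ product, and Clausen's formula for the squared ${}_2F_1$ (though, as a minor aside, Clausen's identity does admit coefficient-wise proofs via classical summation theorems, not only the differential-equation argument).
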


Using Propositions \ref{prop:prod.hg.series.sum} and~\ref{prop:EvenAdd}, we can compute the additive convolutions of some even hypergeometric polynomials; we collect the results in Table~\ref{tab:sum.hgp}.

\begin{table}[ht]\renewcommand{\arraystretch}{1.8}
 \centering
 \begin{tabular}{|c|c|c|}
 \hline $p$ & $q$ & $p\boxplus_{2m} q$ \\
 \hline \hline
 $\HGPS{m}{1-\frac{1}{2m}}{a_1}$ & $\HGPS{m}{1-\frac{1}{2m}}{a_2}$ & \small $\dil{2} \HGPS{m}{\frac{a_1+a_2}{2}, \frac{a_1+a_2}{2}+\frac{1}{2m}, 1-\frac{1}{2m}}{a_1, a_2, a_1+a_2+\frac{1}{m}}$ \\
 \hline
 $\dil{\sqrt{c_1}} \HGPS{m}{1-\frac{1}{2m}}{\cdot}$ & $\dil{\sqrt{c_2}} \HGPS{m}{1-\frac{1}{2m}}{\cdot}$ & $\dil{\sqrt{c_1+c_2}} \HGPS{m}{1-\frac{1}{2m}}{\cdot}$ \\
 \hline
 $\HGPS{m}{b_1, 1-\frac{1}{2m}}{\cdot}$ & $\HGPS{m}{b_2, 1-\frac{1}{2m}}{\cdot}$ & $\HGPS{m}{b_1+b_2, 1-\frac{1}{2m}}{\cdot}$ \\
 \hline
 \small $\HGPS{m}{b_1+b_2-a, 1-\frac{1}{2m}}{\cdot}$ & \small $\HGPS{m}{a-b_1, a-b_2, 1-\frac{1}{2m}}{a}$ & $\HGPS{m}{b_1, b_2, 1-\frac{1}{2m}}{a}$ \\
 \hline
 $\HGPS{m}{b_1, b_2, 1-\frac{1}{2m}}{b_1+b_2-\frac{1}{2m}}$ & $\HGPS{m}{b_1, b_2, 1-\frac{1}{2m}}{b_1+b_2-\frac{1}{2m}}$ & $\HGPS{m}{2b_1, b_1+b_2, 2b_2, 1-\frac{1}{2m}}{b_1+b_2-\frac{1}{2m}, 2(b_1+b_2)}$ \\
 \hline
 \end{tabular}
 \caption{Sum of even hypergeometric polynomials.}
 \label{tab:sum.hgp}
\end{table}

\begin{Example}[additive convolution of Hermite polynomials]
 \label{exa:HermiteDouble}
 Notice that row~2 of Table~\ref{tab:sum.hgp} asserts that the additive convolution of two Hermite polynomials yields another Hermite polynomial, recovering a result which also follows from the machinery of \cite{arizmendi2018cumulants,marcus}.
\end{Example}

\begin{Example}[additive convolution of Bernoulli polynomials] \label{exa:BernoulliAddition}
 To compute the additive convolution of two Bernoulli polynomials, we can take $a_1=a_2=1-\frac{1}{2m}$ in row~1 of Table~\ref{tab:sum.hgp}. After a cancellation of the parameter $1-\frac{2}{m}$ appearing downstairs and upstairs in each polynomial, we obtain{\samepage
 \[ B_{2m}\boxplus_{2m}B_{2m}=\HGPS{m}{\cdot}{\cdot}\boxplus_{2m}\HGPS{m}{\cdot}{\cdot}= \dil{2} \HGPS{m}{1}{2}. \]
 Notice that the even part of the right-hand side is a Jacobi polynomial.}

 The special significance of this example, for us, is that it mirrors a basic example of free convolution which can be found in \cite[Example 12.8]{NS-book}. Namely, if $\mu = \frac{1}{2}(\delta_1 + \delta_{-1})$ is the measure with atoms at $\pm 1$ and mass $1/2$ each, the free convolution $\mu \boxplus \mu$ is a so-called \emph{arcsine distribution} (centered at $0$ and supported on $[-2,2]$). So the analogue of the arcsine distribution in finite free probability should be the dilation of the squared Jacobi polynomial \smash{$\dil{2} \mathcal{H}_{m}^{E}\bigl[\begin{smallmatrix}1\\ 2\end{smallmatrix}\bigr]$}.
\end{Example}

\section{Finite free commutators}
\label{sec:commutator}

One of the main insights of \cite{MSS15} is that the operations $\boxplus_n$ and $\boxtimes_n$, with their peculiar algebraic descriptions reviewed in Definition~\ref{def:FinFreeConv}, actually have very natural interpretations involving random matrices. Specifically, $p \boxplus_n q$ is the expected characteristic polynomial of $A + U B U^*$, where~${p(x) = \det(xI_n-A)}$ and $q(x) = \det(xI_n-B)$ are the characteristic polynomials of some diagonal $n \times n$ matrices $A$ and $B$, and $U$ is a random $n \times n$ unitary matrix. Similarly, $p \boxtimes_n q$ is the expected characteristic polynomial of $A U B U^*$.

A natural next step is to look at other polynomials in $A$ and $U B U^*$, and try to extract algebraic descriptions of their expected characteristic polynomials. In particular, knowing the historical development of free probability, one might gravitate towards the self-adjoint commutator $i(A U B U^* - U B U^* A)$. The algebraic description of this commutator operation, due to \cite{campbell2022commutators}, can be set up in purely algebraic terms involving $\boxplus_n$, $\boxtimes_n$, and a particular special polynomial.

\begin{Notation} \label{notation:Commutator}
 Let
 \[ z_n(x) := \sum_{k=0}^{\lfloor n/2 \rfloor} x^{n-2k} (-1)^k \binom{n}{2k} \falling{n}{k} \frac{k!}{(2k)!} \frac{n+1-k}{n+1} \]
 and for polynomials $p(x)$ and $q(x)$ with degree $n$, write
 \begin{equation} \label{eq:commutator}
 p \square_n q := \mathrm{Sym}(p)\boxtimes_n \mathrm{Sym}(q) \boxtimes_n z_n.
 \end{equation}
 The use of the symbol $\square$ is inspired by its use in \cite{NS98}.

 With $n=2m$, one can write
 \[ z_{2m} = \dil{\frac{1}{2}} \HGPS{m}{2, 2, 1-\frac{1}{2m}}{-\frac{1}{2m}, -\frac{1}{2m}, 2+\frac{1}{m}}. \]
 By Theorem~\ref{thm:realrootedness}, it follows that $z_{2m} \in \P_n^E(\rr)$.
\end{Notation}

The point of the operation $\square_n$ is that it encodes the expected characteristic polynomials of randomly rotated matrices.

\begin{Theorem}[\cite{campbell2022commutators}]
 \label{thm:Commutator}
 Let $A$ and $B$ be normal $n \times n$ matrices with characteristic polynomials~${p(x) =\chi(A):= \det[xI-A]}$ and $q(x) =\chi(B):= \det[xI-B]$. Then
 \[ \bE_U \chi [i(A U B U^* - U B U^* A)] = p(x) \square_n q(x), \]
 where $U$ is a random $n \times n$ unitary matrix.
\end{Theorem}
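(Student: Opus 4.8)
The plan is to compute $\bE_U\chi[C]$ directly, where $C:=i(AUBU^*-UBU^*A)$, and match coefficients against the combinatorial definition \eqref{eq:commutator} of $\square_n$. Both sides are polynomial in $\sfe_1(A),\dots,\sfe_n(A)$ and $\sfe_1(B),\dots,\sfe_n(B)$ (for the left side this uses that $\bE_U\chi[C]$ is invariant under conjugating $A$ and $B$ separately, since $U$ is Haar), so it suffices to prove the identity for Hermitian $A,B$, from which it extends to all normal $A,B$ by Zariski density; for Hermitian $A,B$ the matrix $C$ is Hermitian and $\bE_U\chi[C]\in\P_n$ is well defined. Writing $W:=UBU^*$, each coefficient $\bE_U[\sfe_k(C)]$ is --- after expanding $(AW-WA)^j$ into its $2^j$ monomials, each a word in $A$ and $W$ carrying a sign recording how many of its factors are of the form $-WA$ --- a universal combination of expectations of traces of words in $A$ and $W$.

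First I would evaluate those integrals with the Weingarten calculus for $U(n)$: each $\bE_U$ of a (product of) traces of words in $A$ and $UBU^*$ is a finite sum, indexed by pairs of permutations, of products of trace-moments of $A$ and of $B$ weighted by unitary Weingarten functions. The key bookkeeping step is to show that summing over the $2^j$ sign choices reorganizes the $A$-dependence into precisely the coefficients of $\mathrm{Sym}(p)=p\boxplus_n\dil{-1}p$ --- using $\dil{-1}\chi(A)=\chi(-A)$ and the coefficient formula for $\boxplus_n$ --- the $B$-dependence into $\mathrm{Sym}(q)$, and the remaining purely scalar combinatorial factor, which no longer involves $A$ or $B$, into $z_n$ once it is recombined via the coefficientwise description of $\boxtimes_n$ in Definition~\ref{def:FinFreeConv}; concretely, one checks that the residual weights match the coefficients $\binom{n}{2k}\falling{n}{k}\frac{k!}{(2k)!}\frac{n+1-k}{n+1}$ from Notation~\ref{notation:Commutator}. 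As a byproduct this shows $\bE_U\chi[C]\in\P_n^E$, consistent with the evenness of the right-hand side.

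I expect the main obstacle to be exactly this last identification --- proving that the Weingarten sum factors cleanly into three independent pieces and isolating the universal polynomial $z_n$. I would pin down $z_n$ from the ``single-block'' contributions (which already determine all its coefficients) and then exploit the multiplicativity built into the Weingarten expansion to handle the remaining blocks, in the spirit of the combinatorial arguments of \cite{campbell2022commutators,arizmendi2018cumulants}. A convenient consistency check along the way is the limit $n\to\infty$: by Theorem~\ref{thm:finiteAsymptotics} the identity should degenerate to the Nica--Speicher description of the free self-adjoint commutator $i(ab-ba)$ of free elements \cite{NS98}, with $\mathrm{Sym}$ becoming free symmetrization and $z_n$ tending to the corresponding ``commutator'' distribution, so any closed form I derive for $z_n$ can be tested against that limit. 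Finally, to connect with the present paper, I would separately verify the identity $z_{2m}=\dil{\frac{1}{2}}\HGPS{m}{2, 2, 1-\frac{1}{2m}}{-\frac{1}{2m}, -\frac{1}{2m}, 2+\frac{1}{m}}$ by matching the coefficients $\sfe_{2k}$ using Notation~\ref{not:even.hgp} and the duplication identity \eqref{eq.falling.factorial.2m}, after which $z_{2m}\in\P_n^E(\rr)$ follows from Theorem~\ref{thm:realrootedness}(3) applied to that hypergeometric form.
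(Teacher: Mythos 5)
First, note that the paper does not prove Theorem~\ref{thm:Commutator} at all: it is imported verbatim from \cite{campbell2022commutators}, so there is no in-paper argument to compare yours against. Judged as a standalone proof, your proposal has a genuine gap. The framing steps are fine and standard: $\bE_U\chi[C]$ is a polynomial in the entries of $A$ and $B$ that is invariant under separate unitary (hence, by density, $GL_n$) conjugation, so it is a polynomial in the $\sfe_k(p)$ and $\sfe_k(q)$, and it suffices to verify the identity on Hermitian pairs; likewise, expanding $\sfe_k(C)$ via Newton's identities into signed expectations of products of traces of words in $A$ and $W=UBU^*$, and evaluating these by Weingarten calculus, is the right starting point. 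But everything after that is asserted rather than proved. The entire content of the theorem is the claim that the resulting Weingarten sum \emph{factors} as $\mathrm{Sym}(p)\boxtimes_n\mathrm{Sym}(q)\boxtimes_n z_n$, i.e., that (i) the $A$-dependence and $B$-dependence decouple coefficientwise in the multiplicative-convolution sense of Definition~\ref{def:FinFreeConv}, (ii) the $A$-part collapses to the coefficients of $p\boxplus_n\dil{-1}p$, and (iii) the leftover scalar weights are consistent across all the trace-word contributions to a given $\sfe_{2k}$ so that they assemble into a single universal polynomial $z_n$. None of these is automatic: a generic Weingarten expansion of $\bE_U[\sfe_k(C)]$ is a sum over pairs of permutations whose $A$- and $B$-moments are coupled through the Weingarten weights, and you give no mechanism forcing the decoupling or the specific $\binom{n}{2k}\falling{n}{k}\frac{k!}{(2k)!}\frac{n+1-k}{n+1}$ weights. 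You explicitly flag this as ``the main obstacle,'' which is accurate --- but it is the theorem, not a bookkeeping afterthought, and the plan for overcoming it (``pin down $z_n$ from single-block contributions, then exploit multiplicativity'') presupposes the multiplicative structure you are supposed to establish.

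Two smaller remarks. The $n\to\infty$ consistency check against \cite{NS98} via Theorem~\ref{thm:finiteAsymptotics} is a sanity check only and cannot certify the finite-$n$ coefficients, so it does not substitute for step (iii). On the other hand, your final paragraph --- verifying $z_{2m}=\dil{\frac{1}{2}}\HGPS{m}{2, 2, 1-\frac{1}{2m}}{-\frac{1}{2m}, -\frac{1}{2m}, 2+\frac{1}{m}}$ by matching $\sfe_{2k}$ via Notation~\ref{not:even.hgp} and \eqref{eq.falling.factorial.2m}, and deducing $z_{2m}\in\P_{2m}^E(\rr)$ from Theorem~\ref{thm:realrootedness} --- is a correct and self-contained computation, but it concerns Notation~\ref{notation:Commutator}, not the random-matrix identity of the theorem itself.
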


The realization of $z_n$ as a hypergeometric polynomial suggests a connection between special polynomials and analytic questions about finite free commutators. Our conjecture is that the commutator preserves real-rootedness in all cases.

\begin{Conjecture} \label{con:CommReal}
 For $p,q \in \P_n(\rr)$, we have $p \square_n q \in \P_n(\rr)$.
\end{Conjecture}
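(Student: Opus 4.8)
The plan is to use the degree-doubling dictionary to turn this into a question about finite free \emph{multiplicative} convolutions of positive-rooted polynomials, and then to isolate the single hypergeometric factor that obstructs a routine argument. First I would reduce to even degree: if $n$ is odd then $p\in\P_n(\rr)$ has a root at $0$ and factors as $p=x\tilde p$ with $\tilde p$ even of even degree, so by Remark~\ref{rem:odd.degree} it suffices to take $n=2m$. Since $\mathrm{Sym}(p),\mathrm{Sym}(q)\in\P_{2m}^E$ by Lemma~\ref{lem:basic.symmetrization}(1) and $z_{2m}\in\P_{2m}^E(\rr)$ by Notation~\ref{notation:Commutator}, Remark~\ref{rem:convolutions.even} gives $p\square_{2m}q\in\P_{2m}^E$; by Fact~\ref{fact:basic.Q}, $p\square_{2m}q\in\P_{2m}(\rr)$ if and only if $\halfeven_m(p\square_{2m}q)\in\P_m(\rr_{\geq0})$, which is what must be shown.

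Next I would make $\halfeven_m(p\square_{2m}q)$ explicit. Applying Proposition~\ref{prop.multiplicative.halving} twice to the triple product $\mathrm{Sym}(p)\boxtimes_{2m}\mathrm{Sym}(q)\boxtimes_{2m}z_{2m}$ produces $\halfeven_m(\mathrm{Sym}(p))\boxtimes_m\halfeven_m(\mathrm{Sym}(q))\boxtimes_m\halfeven_m(z_{2m})$ together with two extra copies of the hypergeometric factor $\HGP{m}{-\frac{1}{2m}}{1-\frac{1}{2m}}$. Rewriting $\halfeven_m(z_{2m})=\dil{1/4}\HGP{m}{2,2,1-\frac{1}{2m}}{-\frac{1}{2m},-\frac{1}{2m},2+\frac{1}{m}}$ (from Notation~\ref{notation:Commutator}), combining the three hypergeometric factors via Theorem~\ref{thm:MFMP}(1), and cancelling the repeated parameters $-\frac{1}{2m}$ and $1-\frac{1}{2m}$, one is left with
\[
 \halfeven_m(p\square_{2m}q)=\halfeven_m(\mathrm{Sym}(p))\boxtimes_m\halfeven_m(\mathrm{Sym}(q))\boxtimes_m F_m,\qquad F_m:=\dil{1/4}\HGP{m}{2,2}{2+\tfrac{1}{m},\,1-\tfrac{1}{2m}}.
\]
The first two factors lie in $\P_m(\rr_{\geq0})$ by Lemma~\ref{lem:basic.symmetrization}(5), so by Theorem~\ref{thm:realrootedness}(3) the conjecture would follow at once \emph{if} $F_m\in\P_m(\rr_{\geq0})$ — and for $m=1$ this is literally true ($F_1=x-\tfrac23$).

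The obstacle is that $F_m$ is \emph{not} real-rooted for $m\geq2$: already $F_2$ is a dilation of $\HGP{2}{2,2}{5/2,3/4}$, whose discriminant is negative, the culprit being the factor $\HGP{m}{2}{1-\frac{1}{2m}}$. So Remark~\ref{rem:ruleofsigns} cannot close the argument: the conjecture is precisely the statement that finite free multiplication by the fixed complex-rooted polynomial $F_m$ maps the \emph{proper} subclass $\halfeven_m\bigl(\mathrm{Sym}(\P_{2m}(\rr))\bigr)\boxtimes_m\halfeven_m\bigl(\mathrm{Sym}(\P_{2m}(\rr))\bigr)$ of $\P_m(\rr_{\geq0})$ back into $\P_m(\rr_{\geq0})$, so one must genuinely use that $\halfeven_m(\mathrm{Sym}(p))$ arises from a \emph{real-rooted} $p$. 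That this is not vacuous is visible in the would-be worst case $\halfeven_m(\mathrm{Sym}(p))=\halfeven_m(\mathrm{Sym}(q))=(x-1)^m$, which would force $p\square_{2m}q=\lift_m(F_m)\notin\P_{2m}(\rr)$ but cannot occur: it requires $\mathrm{Sym}(p)=B_{2m}=(x^2-1)^m$, and since $\mathrm{Sym}$ is translation invariant (Lemma~\ref{lem:basic.symmetrization}(4)) we may assume $p$ has roots summing to zero, whereupon $\sum_i\lambda_i(p)^2=2$ while $\prod_i\lambda_i(p)=\tfrac{5}{12}$, contradicting AM--GM for $m\geq2$. The real content of the conjecture is the quantitative refinement of this: real-rootedness of $p$ keeps the roots of $\halfeven_m(\mathrm{Sym}(p))$ spread out enough to absorb the complex perturbation carried by $F_m$.

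Two routes seem plausible. Route~(i): obtain a description of the image of $\halfeven_m\circ\mathrm{Sym}$ on $\P_{2m}(\rr)$ sharp enough — e.g. through its relation to the generalized rectangular convolution $\boxplus_m^{-1/2}$ (Proposition~\ref{prop:rectangular.convolution} and the remark following Proposition~\ref{prop:EvenAdd}) — to prove directly that convolving its elements with $F_m$ never creates a non-real root. Route~(ii): bypass the hypergeometric bookkeeping and return to the matrix model of Theorem~\ref{thm:Commutator}, proving real-rootedness of $\bE_U\chi[i(AUBU^*-UBU^*A)]$ for Hermitian $A,B$ by building an interlacing family of characteristic polynomials of $i(AU_\sigma BU_\sigma^*-U_\sigma BU_\sigma^*A)$ along a discretization of the unitary group, in the spirit of the Marcus--Spielman--Srivastava arguments underlying real-rootedness of $\boxplus_n$, $\boxtimes_n$, and $\boxplus_m^{\alpha}$ for nonnegative integer $\alpha$. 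I expect route~(ii) to be the main obstacle: unlike $A+UBU^*$, the self-adjoint commutator is a higher-degree, non-monotone function of $U$, so the rank-one-update (real-stability) structure driving those proofs is not directly available — which is presumably why the statement is still open — while partial results, say for $p$ or $q$ of small degree or sufficiently spread out, should follow from the reduction above together with a direct analysis of $F_m$.
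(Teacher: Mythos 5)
The statement you were asked to prove is stated in the paper as Conjecture~\ref{con:CommReal}, and the paper does \emph{not} prove it: it explicitly says a general proof ``has turned out to be elusive'' and offers only the reduction of Proposition~\ref{prop:commutator.even.part}, the observation (in the Remark following it) that the factor $\HGP{m}{2,2}{1-\frac{1}{2m},2+\frac{1}{m}}$ is not positive-rooted, and the conditional Theorem~\ref{thm:partial-realrooted}. Your proposal is therefore not a proof, and honestly does not claim to be one; what it does contain coincides almost exactly with the paper's treatment. Your reduction to showing $\halfeven_m(p\square_{2m}q)\in\P_m(\rr_{\geq 0})$, your displayed identity with $F_m=\dil{1/4}\HGP{m}{2,2}{1-\frac{1}{2m},2+\frac{1}{m}}$, and your diagnosis that the only missing ingredient is positivity of the roots of $F_m$ are precisely Proposition~\ref{prop:commutator.even.part} and its Remark; your ``route (i)'' under a factorization hypothesis on $\halfeven_m(\mathrm{Sym}(q))$ is Theorem~\ref{thm:partial-realrooted}. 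The gap --- showing that multiplicative convolution with the non-real-rooted $F_m$ still lands in $\P_m(\rr_{\geq 0})$ when both other factors come from symmetrizations of real-rooted polynomials --- is exactly the open content of the conjecture, and neither you nor the paper closes it.

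A few of your side observations go slightly beyond the paper and check out: $F_1=x-\frac{2}{3}$; the discriminant of $\HGP{2}{2,2}{3/4,5/2}$ is indeed negative (its coefficients are $\sfe_1=\frac{64}{15}$, $\sfe_2=\frac{48}{5}$), so Theorem~\ref{thm:realrootedness}(3) genuinely cannot finish the argument for $m\geq 2$; and your AM--GM exclusion of the worst case $\mathrm{Sym}(p)=B_{2m}$ is correct, though the specific values $\sum_i\lambda_i(p)^2=2$ and $\prod_i\lambda_i(p)=\frac{5}{12}$ are the $m=2$ instance and should be flagged as such rather than asserted ``for $m\geq 2$.'' Your ``route (ii)'' (an interlacing-family argument over the unitary group in the style of Marcus--Spielman--Srivastava) is not discussed in the paper at all, and your caveat that the commutator is not a rank-one update in $U$ correctly identifies why that route is not routine. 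In short: no error of reasoning, but no proof either --- the statement remains a conjecture, and your write-up should present it as an analysis of the obstruction rather than as a proof.
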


A general proof of this conjecture has turned out to be elusive, but we can provide some partial results. From now on, for the sake of simplicity we assume $n = 2m$, see Remark~\ref{rem:odd.degree}. First, we can rephrase the result of \cite{campbell2022commutators} in terms of our framework for even polynomials.

\begin{Proposition}[even part of commutator] \label{prop:commutator.even.part}
 For $p,q \in \P_{2m}(\rr)$, we have
 \[ \halfeven_{m}(p \square_{2m} q)= \halfeven_{m} (\mathrm{Sym}(p) ) \boxtimes_m \halfeven_{m} (\mathrm{Sym}(q) ) \boxtimes_m \dil{\frac{1}{4}} \HGP{m}{2, 2}{1-\frac{1}{2m}, 2+\frac{1}{m}}. \]
\end{Proposition}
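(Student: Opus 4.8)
The plan is to unwind the definition of $\square_{2m}$ and push the degree-halving map $\halfeven_{m}$ through it, using the multiplicative formula of Proposition~\ref{prop.multiplicative.halving} together with the realization of $z_{2m}$ as a dilated even hypergeometric polynomial recorded in Notation~\ref{notation:Commutator}.

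First, recall that $p \square_{2m} q = \mathrm{Sym}(p) \boxtimes_{2m} \mathrm{Sym}(q) \boxtimes_{2m} z_{2m}$, and that all three factors lie in $\P_{2m}^{E}$ (by Lemma~\ref{lem:basic.symmetrization}(1) and Notation~\ref{notation:Commutator}). Grouping the triple $\boxtimes_{2m}$-product in two steps and applying Proposition~\ref{prop.multiplicative.halving} twice, using that $\boxtimes_{m}$ is associative and commutative to collect the correction factors at the end, yields
\[ \halfeven_{m}(p \square_{2m} q) = \halfeven_{m}(\mathrm{Sym}(p)) \boxtimes_{m} \halfeven_{m}(\mathrm{Sym}(q)) \boxtimes_{m} \halfeven_{m}(z_{2m}) \boxtimes_{m} \HGP{m}{-\frac{1}{2m}}{1-\frac{1}{2m}} \boxtimes_{m} \HGP{m}{-\frac{1}{2m}}{1-\frac{1}{2m}} , \]
so it remains to identify the last three factors with $\dil{\frac14}\HGP{m}{2,2}{1-\frac{1}{2m},2+\frac{1}{m}}$.

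Next I would compute $\halfeven_{m}(z_{2m})$. Since $z_{2m} = \dil{\frac12}\,\HGPS{m}{2,2,1-\frac{1}{2m}}{-\frac{1}{2m},-\frac{1}{2m},2+\frac{1}{m}} = \dil{\frac12}\,\lift_{m}\bigl(\HGP{m}{2,2,1-\frac{1}{2m}}{-\frac{1}{2m},-\frac{1}{2m},2+\frac{1}{m}}\bigr)$, and a one-line check gives $\dil{\alpha}\lift_{m}(r) = \lift_{m}(\dil{\alpha^{2}} r)$ for any $r \in \P_m$, we obtain $z_{2m} = \lift_{m}\bigl(\dil{\frac14}\HGP{m}{2,2,1-\frac{1}{2m}}{-\frac{1}{2m},-\frac{1}{2m},2+\frac{1}{m}}\bigr)$; applying $\halfeven_{m} = \lift_{m}^{-1}$ (Fact~\ref{fact:basic.Q}) then gives $\halfeven_{m}(z_{2m}) = \dil{\frac14}\HGP{m}{2,2,1-\frac{1}{2m}}{-\frac{1}{2m},-\frac{1}{2m},2+\frac{1}{m}}$.

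Finally, I would collect the three remaining hypergeometric factors. Using that $\dil{\frac14}$ distributes over $\boxtimes_{m}$ (Remark~\ref{rem.convolution.basic.facts}) together with Theorem~\ref{thm:MFMP}(1) to merge hypergeometric polynomials under $\boxtimes_{m}$,
\[ \dil{\tfrac14}\HGP{m}{2,2,1-\frac{1}{2m}}{-\frac{1}{2m},-\frac{1}{2m},2+\frac{1}{m}} \boxtimes_{m} \HGP{m}{-\frac{1}{2m}}{1-\frac{1}{2m}} \boxtimes_{m} \HGP{m}{-\frac{1}{2m}}{1-\frac{1}{2m}} = \dil{\tfrac14}\HGP{m}{2,2,1-\frac{1}{2m},-\frac{1}{2m},-\frac{1}{2m}}{-\frac{1}{2m},-\frac{1}{2m},2+\frac{1}{m},1-\frac{1}{2m},1-\frac{1}{2m}} , \]
and cancelling the two repeated $-\frac{1}{2m}$'s and one repeated $1-\frac{1}{2m}$ between the upper and lower parameter lists (the cancellation noted after Notation~\ref{nota:rhoparamseven}, applied at the level of polynomials) leaves $\dil{\frac14}\HGP{m}{2,2}{1-\frac{1}{2m},2+\frac{1}{m}}$. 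Substituting back into the displayed identity for $\halfeven_{m}(p \square_{2m} q)$ finishes the proof. There is no serious obstacle here: the only points demanding care are the bookkeeping of the two correction factors produced by iterating Proposition~\ref{prop.multiplicative.halving} and the interaction of the dilation with the degree-doubling map $\lift_{m}$.
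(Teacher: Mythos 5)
Your proposal is correct and follows essentially the same route as the paper: apply Proposition~\ref{prop.multiplicative.halving} twice to the triple $\boxtimes_{2m}$-product defining $p \square_{2m} q$, identify $\halfeven_{m}(z_{2m})$ as $\dil{\frac{1}{4}} \HGP{m}{2, 2, 1-\frac{1}{2m}}{-\frac{1}{2m}, -\frac{1}{2m}, 2+\frac{1}{m}}$, and then merge and cancel the hypergeometric factors via Theorem~\ref{thm:MFMP}. The only difference is that you make explicit the dilation--lift interaction and the parameter cancellation, which the paper's proof leaves implicit.
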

\begin{proof}
 When one applies $\halfeven_{m}$ to the right-hand side of equation \eqref{eq:commutator} and uses Proposition~\ref{prop.multiplicative.halving}, one obtains the expression
 \begin{align*}
 \halfeven_{m}(\mathrm{Sym}(p)) \hspace{-0.95pt} \boxtimes_m\hspace{-0.95pt} \halfeven_{m}(\mathrm{Sym}(q)) \hspace{-0.95pt}\boxtimes_m \hspace{-0.95pt}\dil{\frac{1}{4}} \HGP{m}{2, 2, 1-\frac{1}{2m}}{-\frac{1}{2m}, -\frac{1}{2m}, 2+\frac{1}{m}}
\hspace{-0.95pt}\boxtimes_m \hspace{-0.95pt}\HGP{m}{-\frac{1}{2m}}{1-\frac{1}{2m}}\hspace{-0.95pt} \boxtimes_m \hspace{-0.95pt} \HGP{m}{-\frac{1}{2m}}{1-\frac{1}{2m}}.
 \end{align*}
 By Theorem~\ref{thm:MFMP}, the hypergeometric polynomials can be combined, and cancellation of parameters leaves
\smash{$ \mathcal{H}_{m}\bigl[ \begin{smallmatrix} 2, 2\\ 1-\frac{1}{2m}, 2+\frac{1}{m}\end{smallmatrix}\bigr] $},
 hence the claim.
\end{proof}

\begin{Remark}
 It is very important to notice that the polynomial
\smash{$ \mathcal{H}_{m}\bigl[ \begin{smallmatrix} 2, 2\\ 1-\frac{1}{2m}, 2+\frac{1}{m}\end{smallmatrix}\bigr]$},
 appearing in Proposition~\ref{prop:commutator.even.part}, does not belong to $\P_m(\rr_{\geq 0})$.

 Actually, if this polynomial were to be in $\P_m(\rr_{\geq 0})$, then Conjecture \ref{con:CommReal} would follow from part (3) of Theorem~\ref{thm:realrootedness}, after noticing that
$ \halfeven_{m}(\mathrm{Sym}(p) )\in \P_m(\rr_{\geq 0})$ and $\halfeven_{m}(\mathrm{Sym}(q) ) \in \P_m(\rr_{\geq 0})$
 by part (5) of Lemma \ref{lem:basic.symmetrization}.

 This means that to prove Conjecture \ref{con:CommReal}, one must find some kind of ``extra" real-rootedness in the symmetrizations $\halfeven_{m}(\mathrm{Sym}(p) )$. It is unclear if this extra positivity holds in general. With this idea in mind, however, we can explicitly formulate a theorem that is similar to Conjecture~\ref{con:CommReal}, but requires an extra assumption.
\end{Remark}

\begin{Theorem}
 \label{thm:partial-realrooted}
 Let $p,q \in \P_{2m}(\rr)$ and suppose that
 \[ \halfeven_{m}(\mathrm{Sym}(q)) = \HGP{m}{1-\frac{1}{2m}}{\cdot} \boxtimes_m r \]
 for some $r \in \P_m(\rr_{\geq 0})$. Then $p \square_{2m} q \in \P_{2m}(\rr)$.
\end{Theorem}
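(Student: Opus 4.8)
The plan is to unwind $p \square_{2m} q$ through the degree-halving map and exploit the extra factorization hypothesis on $\halfeven_m(\mathrm{Sym}(q))$. First I would recall from Proposition~\ref{prop:commutator.even.part} that
\[
\halfeven_{m}(p \square_{2m} q)= \halfeven_{m} (\mathrm{Sym}(p) ) \boxtimes_m \halfeven_{m} (\mathrm{Sym}(q) ) \boxtimes_m \dil{\frac{1}{4}} \HGP{m}{2, 2}{1-\frac{1}{2m}, 2+\frac{1}{m}},
\]
and substitute the hypothesis $\halfeven_{m}(\mathrm{Sym}(q)) = \HGP{m}{1-\frac{1}{2m}}{\cdot} \boxtimes_m r$. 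This produces, using commutativity and associativity of $\boxtimes_m$ (Remark~\ref{rem.convolution.basic.facts}), the expression
\[
\halfeven_{m}(p\square_{2m}q)= \halfeven_m(\mathrm{Sym}(p)) \boxtimes_m r \boxtimes_m \Bigl( \HGP{m}{1-\frac{1}{2m}}{\cdot} \boxtimes_m \dil{\frac{1}{4}}\HGP{m}{2,2}{1-\frac{1}{2m},2+\frac{1}{m}} \Bigr).
\]
The key observation is that dilation commutes with $\boxtimes_m$ up to absorbing the scalar, and that by part~(1) of Theorem~\ref{thm:MFMP} the bracketed multiplicative convolution of hypergeometric polynomials is again hypergeometric: one gets $\dil{\frac14}\HGP{m}{1-\frac{1}{2m},2,2}{1-\frac{1}{2m},2+\frac{1}{m}}$, and the parameter $1-\frac{1}{2m}$ cancels between numerator and denominator, leaving $\dil{\frac14}\HGP{m}{2,2}{2+\frac{1}{m}}$.

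Next I would identify $\HGP{m}{2,2}{2+\frac{1}{m}}$ as a Jacobi-type hypergeometric polynomial lying in $\P_m(\rr_{\geq 0})$ (indeed in $\P_m([0,1])$ up to the dilation): here one checks the parameter conditions against the bullet points in Example~\ref{exa:Jacobi}. With $b=2>1-\frac1m$ and $a=2+\frac1m > b+1-\frac1m = 2$ (for $m\geq 1$), the first bullet point of Example~\ref{exa:Jacobi} gives $\HGP{m}{2}{2+\frac1m}\in\P_m([0,1])$; but our polynomial has $b$ repeated, $\HGP{m}{2,2}{2+\frac1m}$. To handle the repeated parameter I would instead invoke the factorization \eqref{eq:zeroson1} or a direct argument: $\HGP{m}{2,2}{2+\frac{1}{m}} = \HGP{m}{2}{2+\frac1m} \boxtimes_m \HGP{m}{2}{\cdot}$ by Theorem~\ref{thm:MFMP}(1), and $\HGP{m}{2}{\cdot}$ is a (scaled) Laguerre polynomial in $\P_m(\rr_{\geq 0})$ by the third bullet of Example~\ref{exa:LaguerreHermite} (since $2 > 1-\frac1m$); then $\boxtimes_m$ of two polynomials in $\P_m(\rr_{\geq 0})$ stays in $\P_m(\rr_{\geq 0})$ by Theorem~\ref{thm:realrootedness}(3).

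Having established that $\halfeven_m(p\square_{2m}q)$ is a $\boxtimes_m$-product of three polynomials each in $\P_m(\rr_{\geq 0})$ — namely $\halfeven_m(\mathrm{Sym}(p))\in\P_m(\rr_{\geq 0})$ by Lemma~\ref{lem:basic.symmetrization}(5), $r\in\P_m(\rr_{\geq 0})$ by hypothesis, and the dilated Jacobi/Laguerre factor above — I conclude by Theorem~\ref{thm:realrootedness}(3) that $\halfeven_m(p\square_{2m}q)\in\P_m(\rr_{\geq 0})$. Finally, since $p\square_{2m}q$ is even (it is built from $\mathrm{Sym}$'s and multiplicative convolutions, hence lies in $\P_{2m}^E$ by Lemma~\ref{lem:basic.symmetrization}(1) and Remark~\ref{rem:convolutions.even}), Fact~\ref{fact:basic.Q} tells us that $p\square_{2m}q = \lift_m(\halfeven_m(p\square_{2m}q))$ lies in $\P_{2m}^E(\rr)$, which gives the claim. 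The main obstacle is the bookkeeping with dilations and the cancellation of the $1-\frac{1}{2m}$ parameter — one must be careful that $\dil{\frac14}$ interacts correctly with $\boxtimes_m$ (it just rescales, by Remark~\ref{rem.convolution.basic.facts}) and that the parameter conditions of Definition~\ref{def:H.polynomials} are not violated when simplifying; everything else is a direct application of the real-rootedness results already in hand.
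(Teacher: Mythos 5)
Your argument follows essentially the same route as the paper's proof: substitute the factorization hypothesis into the identity of Proposition~\ref{prop:commutator.even.part}, cancel the parameter $1-\frac{1}{2m}$ via Theorem~\ref{thm:MFMP}(1) to obtain $\halfeven_{m}(p \square_{2m} q)= \halfeven_{m}(\mathrm{Sym}(p)) \boxtimes_m r \boxtimes_m \dil{\frac{1}{4}} \HGP{m}{2, 2}{2+\frac{1}{m}}$, and conclude with Lemma~\ref{lem:basic.symmetrization}(5), Theorem~\ref{thm:realrootedness}(3), and Fact~\ref{fact:basic.Q}; the paper's proof is the same, only terser about why the hypergeometric factor is positive-rooted.

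One small slip in your verification: for the factor $\HGP{m}{2}{2+\frac{1}{m}}$ the first bullet of Example~\ref{exa:Jacobi} requires $a>b+1-\frac{1}{m}=3-\frac{1}{m}$ (not $2$), and $2+\frac{1}{m}>3-\frac{1}{m}$ fails for $m\geq 2$, so that sufficient condition does not apply. The claim is nevertheless true via equation \eqref{eq:zeroson1} with $r=1$, which gives $\HGP{m}{2}{2+\frac{1}{m}}=(x-1)^{m-1}\bigl(x-\frac{m+1}{2m+1}\bigr)\in\P_m([0,1])$, exactly as the paper records right after \eqref{eq:zeroson1}; since you already cite that identity as an alternative, the repair is immediate and the proof stands.
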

\begin{proof}
 Starting with the expression
 \[ \halfeven_{m}(p \square_{2m} q) = \halfeven_{m}(\mathrm{Sym}(p)) \boxtimes_m \halfeven_{m}(\mathrm{Sym}(q)) \boxtimes \dil{\frac{1}{4}} \HGP{m}{2, 2}{1-\frac{1}{2m}, 2+\frac{1}{m}} \]
 from Proposition~\ref{prop:commutator.even.part}, it suffices to show the polynomial on the right-hand side has all its roots in $\rr_{\geq 0}$.
 We can compute
 \begin{align*}
 \halfeven_{m}(p \square_{2m} q) &= \halfeven_{2m}(\mathrm{Sym}(p)) \boxtimes_m r \boxtimes_m \HGP{m}{1-\frac{1}{2m}}{\cdot} \boxtimes_m \dil{\frac{1}{4}} \HGP{m}{2, 2}{1-\frac{1}{2m}, 2+\frac{1}{m}} \\
 &= \halfeven_{m}(\mathrm{Sym}(p)) \boxtimes_m r \boxtimes_m \dil{\frac{1}{4}} \HGP{m}{2, 2}{2+\frac{1}{m}}
 \end{align*}
 and all three polynomials above are in $\P(\rr_{\geq 0})$. By Theorem~\ref{thm:realrootedness}, this shows $\halfeven_{2m}(p \square_{2m} q) \in \P_m(\rr_{\geq 0})$, and in turn that $p \square_{2m} q \in \P_{2m}(\rr)$.
\end{proof}

\begin{Remark}
 The assumption in Theorem~\ref{thm:partial-realrooted} is rather restrictive: it is satisfied by the Hermite, Laguerre, and Bessel polynomials that are of interest in finite free probability, but it fails for many other polynomials. However, it does all the work of guaranteeing real-rootedness of $p \square_{2m} q$ and allows one to put \emph{any} real-rooted polynomial in the other argument.
\end{Remark}

\subsection{Examples}

First, let us work out some examples that are in the scope of Theorem~\ref{thm:partial-realrooted}.

\begin{Example}[Hermite polynomials]
 \label{exa:HermiteCommutator}
 Let $H_{2m}$ be the Hermite polynomial defined in
 Example~\ref{exa:LaguerreHermite}. By Example~\ref{exa:HermiteDouble}, we have
 \[ \mathrm{Sym}(H_{2m}) = \dil{\sqrt{\frac{2}{m}}} \HGPS{m}{1-\frac{1}{2m}}{\cdot} \]
 so with $p = q = H_{2m}$, we have
 \begin{align*}
 \halfeven_{m}(H_{2m} \square_{2m} H_{2m}) &= \dil{\frac{2}{m}} \HGP{m}{1-\frac{1}{2m}}{\cdot} \boxtimes_m \dil{\frac{2}{m}} \HGP{m}{1-\frac{1}{2m}}{\cdot} \boxtimes_m \dil{\frac{1}{4}} \HGP{m}{2, 2}{1-\frac{1}{2m}, 2+\frac{1}{m}} \\
 &= \dil{\frac{1}{m^2}} \HGP{m}{1-\frac{1}{2m}, 2, 2}{2+\frac{1}{m}}.
 \end{align*}
 This polynomial is positive-rooted because
 \[ \HGP{m}{1-\frac{1}{2m}, 2, 2}{2+\frac{1}{m}} = \HGP{m}{1-\frac{1}{2m}}{2+\frac{1}{m}} \boxtimes_m \HGP{m}{2}{\cdot} \boxtimes_m \HGP{m}{2}{\cdot} ; \]
 the three polynomials on the right-hand side are well-known examples of positive-rooted Jacobi and Laguerre polynomials, reviewed in Examples~\ref{exa:Jacobi} and~\ref{exa:LaguerreHermite}, respectively. This shows that
 \[ H_{2m} \square_{2m} H_{2m} = \dil{\frac{1}{m}} \HGPS{m}{1-\frac{1}{2m}, 2, 2}{2+\frac{1}{m}} \in \P_{2m}(\rr). \]

 The limit in empirical root distribution of this finite free commutator will be described in Example \ref{exa:HermCommLimit}.
\end{Example}

\begin{Example}[Hermite and projection]
 \label{exa:HermProjCommutator}
 Now let us consider the commutator of a Hermite polynomial $H_{2m}$ with a projection-like polynomial \smash{$R_{2m}^{(m)}$}, whose roots are evenly split between $0$ and $1$. As computed above, we have
 \[ \mathrm{Sym}(H_{2m}) = \dil{\sqrt{2/m}} \HGPS{m}{1-\frac{1}{2m}}{\cdot} \qquad \text{and}\qquad \mathrm{Sym}\bigl(R_{2m}^{(m)}\bigr) = \HGPS{m}{1}{2} \]
 so
 \begin{align*}
 \halfeven_{m}\bigl(H_{2m} \square_{2m} R_{2m}^{(m)}\bigr) &= \dil{\frac{2}{m}} \HGP{m}{1-\frac{1}{2m}}{\cdot} \boxtimes_m \HGP{m}{1}{2} \boxtimes_m \dil{\frac{1}{4}} \HGP{m}{2, 2}{1-\frac{1}{2m}, 2+\frac{1}{m}} \\
 &= \dil{\frac{1}{2m}} \HGP{m}{1, 2}{2+\frac{1}{m}}.
 \end{align*}
 This polynomial is positive-rooted because
 \[ \HGP{m}{1, 2}{2+\frac{1}{m}} = \HGP{m}{1}{2+\frac{1}{m}} \boxtimes_m \HGP{m}{2}{\cdot} \]
 and the two polynomials on the right-hand side are again clear examples of positive-rooted Jacobi and Laguerre polynomials. This shows that
 \[ H_{2m} \square_{2m} R_{2m}^{(m)} = \dil{\frac{1}{\sqrt{2m}}} \HGPS{m}{1, 2}{2+\frac{1}{m}} \in \P_{2m}(\rr). \]
 The limit in empirical root distribution of this finite free commutator will be described in Example \ref{exa:HermProjCommLimit}.
\end{Example}

\begin{Example}[Laguerre polynomials]
 \label{exa:LaguerreCommutator}
 Recall the polynomial
\smash{$ L_n^{(\lambda)} = \dil{1/n} \mathcal{H}_{n}\bigl[\begin{smallmatrix} \lambda\\ \cdot\end{smallmatrix}\bigr] $},
 from Example \ref{exa:LaguerreHermite}. By row~1 of Table~\ref{tab:sym.hgp}, with $n=2m$, we have
 \[ \mathrm{Sym}\bigl(L_n^{(\lambda)}\bigr) = \dil{\frac{1}{m}} \HGPS{m}{2\lambda, 1-\frac{1}{2m}}{\cdot} \]
 so we have
 \begin{align*}
 \halfeven_{m}\bigl(L_n^{(\lambda)} \square_{2m} L_n^{(\mu)}\bigr) &= \dil{\frac{1}{m^2}} \HGP{m}{2\lambda, 1-\frac{1}{2m}}{\cdot} \boxtimes \dil{\frac{1}{m^2}} \HGP{m}{2\mu, 1-\frac{1}{2m}}{\cdot} \boxtimes_m \dil{\frac{1}{4}} \HGP{m}{2, 2}{1-\frac{1}{2m}, 2+\frac{1}{m}} \\
 &= \dil{\frac{1}{4m^4}} \HGP{m}{2\lambda, 2\mu, 2, 2, 1-\frac{1}{2m}}{2+\frac{1}{m}}.
 \end{align*}
 This polynomial is positive-rooted for $\lambda,\mu \geq \frac{1}{2}$ because
 \begin{align*}
 \HGP{m}{2\lambda, 2\mu, 2, 2, 1-\frac{1}{2m}}{2+\frac{1}{m}} &= \HGP{m}{2\lambda}{\cdot} \boxtimes_m \HGP{m}{2\mu}{\cdot} \boxtimes_m \HGP{m}{2}{\cdot} \boxtimes \HGP{m}{2}{\cdot}\boxtimes_m \HGP{m}{1-\frac{1}{2m}}{2+\frac{1}{m}} ;
 \end{align*}
 the first two are positive-rooted when $\lambda,\mu \geq \frac{1}{2}$, and the rest are positive-rooted as reviewed in Examples~\ref{exa:LaguerreHermite} and~\ref{exa:Jacobi}. This shows that
 \[ L_{2m}^{(\lambda)} \square_{2m} L_{2m}^{(\mu)} = \dil{\frac{1}{2m^2}} \HGPS{m}{2\lambda, 2\mu, 2, 2, 1-\frac{1}{2m}}{2+\frac{1}{m}} \in \P_{2m}(\rr). \]
 The limit in empirical root distribution of this finite free commutator will be described in Example \ref{exa:LaguCommLimit}.
\end{Example}

\begin{Example}[Bessel polynomials]
 \label{exa:BesselCommutator}
 Let $a,b < 0$ and let \smash{$p = C_n^{(a)}$} and \smash{$q = C_n^{(b)}$}, using the notation of Example \ref{exa:BesselRMP}. Then with $n=2m$, we have
 \[ \mathrm{Sym}\bigl(C_n^{(a)}\bigr) = \dil{2mi} \HGPS{m}{1-\frac{1}{2m}}{2a, a, a-\frac{1}{2m}} \]
 and
 \begin{align*}
 \halfeven_{m}\bigl(C_{2m}^{(a)} \square_{2m} C_{2m}^{(b)}\bigr) ={}& \dil{-4m^2} \HGP{m}{1-\frac{1}{2m}}{2a, a, a-\frac{1}{2m}} \boxtimes_m \dil{-4m^2} \HGP{m}{1-\frac{1}{2m}}{2b, b, b-\frac{1}{2m}} \\
 & \boxtimes_m \dil{\frac{1}{4}} \HGP{m}{2, 2}{1-\frac{1}{2m}, 2+\frac{1}{m}} \\
 ={}&\dil{4m^4} \HGP{m}{1-\frac{1}{2m}, 2, 2}{2a, a, a-\frac{1}{2m}, 2b, b, b-\frac{1}{2m}, 2+\frac{1}{m}}.
 \end{align*}
 This polynomial is positive-rooted because
 \begin{align*}
 \HGP{m}{1-\frac{1}{2m}, 2, 2}{2a, a, a-\frac{1}{2m}, 2b, b, b-\frac{1}{2m}, 2+\frac{1}{m}} ={}& \HGP{m}{2}{\cdot} \boxtimes_m \HGP{m}{2}{\cdot} \boxtimes_m \HGP{m}{1-\frac{1}{2m}}{2+\frac{1}{m}} \\
 & \boxtimes_m \HGP{m}{\cdot}{2a} \boxtimes_m \HGP{m}{\cdot}{a} \boxtimes_m \HGP{m}{\cdot}{a-\frac{1}{2m}} \\
 & \boxtimes_m \HGP{m}{\cdot}{2b} \boxtimes_m \HGP{m}{\cdot}{b} \boxtimes_m \HGP{m}{\cdot}{b-\frac{1}{2m}}.
 \end{align*}
 The Laguerre and Jacobi polynomials on the right-hand side are positive-rooted as explained in previous examples. The Bessel polynomials are all negative-rooted as explained in Example~\ref{exa:BesselRMP}, and the ``rule of signs" from Remark~\ref{rem:ruleofsigns} shows that the multiplicative convolution of six negative-rooted polynomials is positive-rooted. All told, this shows that
 \[ C_n^{(a)} \square_{2m} C_n^{(b)} = \dil{2m^2} \HGPS{m}{1-\frac{1}{2m}, 2, 2}{2a, a, a-\frac{1}{2m}, 2b, b, b-\frac{1}{2m}, 2+\frac{1}{m}} \in \P_{2m}(\rr). \]
\end{Example}

We can also work out examples that are \emph{not} covered by Theorem~\ref{thm:partial-realrooted}, in the sense that neither of the polynomials satisfy its factorization assumption. For these, we will need a particular result from the special function literature.

\begin{Remark} \label{rem:1112}
 The polynomial
 \smash{$\mathcal{H}_{m}\bigr[\begin{smallmatrix} 1, 1\\ 1-\frac{1}{2m}, 2+\frac{1}{m}\end{smallmatrix}\bigr]$}
 is positive-rooted. To see this, we can refer to~\cite[Theorem~3.6]{DJ02} for the fact that
 \[ \HGP{m}{1, 1}{1-\frac{1}{2m}, 2} \in \P_m((0,1)) \]
 and use Theorem~\ref{thm:MFMP} to write
 \[ \HGP{m}{1, 1}{1-\frac{1}{2m}, 2+\frac{1}{m}} = \HGP{m}{1, 1}{1-\frac{1}{2m}, 2} \boxtimes_m \HGP{m}{2}{2+\frac{1}{m}}. \]
 The latter polynomial is positive-rooted, as explained in \eqref{eq:zeroson1}, so the left-hand side is positive-rooted by Theorem~\ref{thm:realrootedness}.
\end{Remark}

\begin{Example}[projections and Bernoulli] \label{exa:ProjBernComm}
 Recall the special Jacobi polynomial $R_{2m}^{(r)}(x) = x^{2m-r} (x-1)^r$ from Example~\ref{exa:Jacobi}. To compute the finite free commutator of two such polynomials, let $0 \leq r,s \leq m$. By row~3 of Table~\ref{tab:sym.hgp}, with $n = 2m$, we have
 \[ \mathrm{Sym}\bigl(R_{2m}^{(r)}\bigr) = \HGPS{m}{\frac{r}{m}, 2-\frac{r}{m}}{2, 1} \qquad \text{and}\qquad \mathrm{Sym}\bigl(R_{2m}^{(s)}\bigr) = \HGPS{m}{\frac{s}{m}, 2-\frac{s}{m}}{2, 1} \]
 so by Proposition~\ref{prop:commutator.even.part}, we have
 \begin{align*}
 \halfeven_m\bigl(R_{2m}^{(r)} \square_{2m} R_{2m}^{(s)}\bigr)
 &= \halfeven_m\bigl(\mathrm{Sym}\bigl(R_{2m}^{(r)}\bigr)\bigr) \boxtimes_m \halfeven_m\bigl(\mathrm{Sym}\bigl(R_{2m}^{(s)}\bigr)\bigr) \boxtimes_m \dil{\frac{1}{4}} \HGP{m}{2, 2}{1-\frac{1}{2m}, 2+\frac{1}{m}} \\
 &= \HGP{m}{\frac{r}{m}, 2-\frac{r}{m}}{2, 1} \boxtimes_m \HGP{m}{\frac{s}{m}, 2-\frac{s}{m}}{2, 1} \boxtimes_m \dil{\frac{1}{4}} \HGP{m}{2, 2}{1-\frac{1}{2m}, 2+\frac{1}{m}} \\
 &= \dil{\frac{1}{4}} \HGP{m}{\frac{r}{m}, 2-\frac{r}{m}, \frac{s}{m}, 2-\frac{s}{m}}{1, 1, 1-\frac{1}{2m}, 2+\frac{1}{m}}
 \end{align*}
 and
 \[ R_{2m}^{(r)} \square_{2m} R_{2m}^{(s)} = \dil{\frac{1}{2}} \HGPS{m}{\frac{r}{m}, 2-\frac{r}{m}, \frac{s}{m}, 2-\frac{s}{m}}{1, 1, 1-\frac{1}{2m}, 2+\frac{1}{m}}. \]
 With $r=s=m$, i.e., with the roots evenly split between $0$ and $1$, the above reads as
 \[ R_{2m}^{(m)} \square_{2m} R_{2m}^{(m)} = \dil{\frac{1}{2}} \HGPS{m}{1, 1}{1-\frac{1}{2m}, 2+\frac{1}{m}}. \]

 Notice that the polynomial $R_{2m}^{(m)}(x)=x^{m} (x-1)^m$ is very similar to the Bernoulli polynomial~${B_{2m}(x) = (x-1)^m (x+1)^m}$, introduced in Example \ref{exm:Bernoulli}. Indeed, we can get one from the other by performing a shift and dilation:
\smash{$ B_{2m} = \bigl(\dil{2} R_{2m}^{(m)}\bigr) \boxplus_{2m} \delta_1 $},
 where $\delta_1(x)=(x+1)^{2m}$. This means that symmetrizations of these two polynomials are the same up to a dilation by 2, and the commutators coincide up to a dilation by 4. Specifically, by Lemma \ref{lem:basic.symmetrization}, one can check that
 \begin{align*}
\mathrm{Sym}(B_{2m}) &= \mathrm{Sym}\bigl(\bigl(\dil{2} R_{2m}^{(m)}\bigr) \boxplus_{2m} \delta_1\bigr) = \mathrm{Sym}\bigl(\dil{2} R_{2m}^{(m)}\bigr) = \dil{2}\mathrm{Sym}\bigl(R_{2m}^{(m)}\bigr).
 \end{align*}
 Thus, by equation \eqref{eq:commutator} we have
 \begin{align*}
 B_{2m} \square_{2m} B_{2m} &= \mathrm{Sym}(B_{2m}) \boxtimes_{2m} \mathrm{Sym}(B_{2m})\boxtimes_{2m} z_{2m} \\
 &= \dil{2} \mathrm{Sym}\bigl(R_{2m}^{(m)}\bigr) \boxtimes_{2m} \dil{2}\mathrm{Sym}\bigl(R_{2m}^{(m)}\bigr)\boxtimes_{2m} z_{2m}= \dil{4} \bigl( R_{2m}^{(m)} \square_{2m} R_{2m}^{(m)} \bigr),
 \end{align*}
 so we conclude that
 \[ B_{2m} \square_{2m} B_{2m} = \dil{2} \HGPS{m}{1, 1}{1-\frac{1}{2m}, 2+\frac{1}{m}}. \]
 As explained in Remark~\ref{rem:1112}, the polynomial appearing as the finite free commutator in these examples is real-rooted. The limit empirical root distributions of these finite free commutators will be described in Examples~\ref{exa:BernCommLimit} and~\ref{exa:ProjCommLimit}.
\end{Example}

\begin{Example}[Jacobi polynomials]
 \label{exa:JacobiCommutator}
 For this example, we omit details, since they are cumbersome and identical to previous examples. If \smash{$p = \HGP{2m}{b}{a}$} and \smash{$q = \HGP{2m}{d}{c}$}, then
 \[ p \square_{2m} q = \dil{1/2} \HGPS{m}{2b, 2a-2b, 2d, 2c-2d, 2, 2, 1-\frac{1}{2m}}{2a, a, a-\frac{1}{2m}, 2c, c, c-\frac{1}{2m}, 2+\frac{1}{m}}. \]
 For example, with $b=d=1$ and $a=c=2$, the above is
 \[ \dil{1/2} \HGPS{m}{2, 2, 2, 2, 1-\frac{1}{2m}}{4, 4, 2-\frac{1}{2m}, 2-\frac{1}{2m}, 2+\frac{1}{m}}. \]
\end{Example}

We collect these examples in Table~\ref{tab:com.hgp}.

\begin{table}[ht]\renewcommand{\arraystretch}{1.8}
 \centering
 \begin{tabular}{|c|c|c|c|}
 \hline $p$ & $q$ & $p \square_{2m} q$ \\
 \hline \hline
 $\HGP{2m}{\frac{1}{2}}{1}$ & $\HGP{2m}{\frac{1}{2}}{1}$ & $\dil{\frac{1}{2}} \HGPS{m}{1, 1}{1-\frac{1}{2m}, 2+\frac{1}{m}}$ \\
 \hline $\HGPS{m}{1-\frac{1}{2m}}{\cdot}$ & $ \HGPS{m}{1-\frac{1}{2m}}{\cdot}$ & $\dil{\frac{1}{2}} \HGPS{m}{1-\frac{1}{2m}, 2, 2}{2+\frac{1}{m}}$ \\
 \hline
 $\HGPS{m}{1-\frac{1}{2m}}{\cdot}$ & $\HGP{2m}{\frac{1}{2}}{1}$ & $\dil{\frac{1}{2}} \HGPS{m}{1, 2}{2+\frac{1}{m}}$ \\
 \hline
 $\HGP{2m}{b}{\cdot} $ & $\HGP{2m}{d}{\cdot} $ & $\dil{\frac{1}{2}} \HGPS{m}{2b, 2d, 2, 2, 1-\frac{1}{2m}}{2+\frac{1}{m}}$ \\
 \hline
 $\HGP{2m}{1}{2}$ & $\HGP{2m}{1}{2}$ & $\dil{\frac{1}{2}} \HGPS{m}{2, 2, 2, 2, 1-\frac{1}{2m}}{4, 4, 2-\frac{1}{2m}, 2-\frac{1}{2m}, 2+\frac{1}{m}}$ \\
 \hline
 \end{tabular}
 \caption{Examples of finite free commutators.} \label{tab:com.hgp}
\end{table}

\section{Asymptotics and connection to free probability}\label{sec:asymptotics}

Most of the results from previous sections have counterparts in free probability, if we let the degree $n$ tend to infinity while the empirical root distribution tends to a compact measure. Some of these results are well known in free probability, and in fact served as original motivation for the results in finite free probability. However, to the best of our knowledge, many examples are new, and might shed some light on future directions in this area.

We begin by explicitly proving the intuitive fact that when we let $m\to\infty$, the map $\lift_{m}$ from Notation~\ref{nota:DegreeDoubling} tends to the map $S$ from Notation~\ref{nota:Q.measures}.
Thus, the bijection between $\P_m(\rr_{\geq 0})$ and~$\P_{2m}^E(\rr)$ from Fact \ref{fact:basic.Q}, in the limit becomes the bijection between $\MM(\rr_{\geq 0})$ and $\MM^E(\rr)$ from Lemma \ref{lem:Q.homeomorphism}.

\begin{Proposition}[even polynomials approximate symmetric measures] \label{prop:Q.limit}
 Let $\mfp = (p_m)_{m \geq 1}$ be a~converging sequence of positive-rooted polynomials. Then $\sqrt{\mfp} := (\lift_{m}(p_{2m}))_{m \geq 1}$ is a converging sequence of even real-rooted polynomials, with $\rho(\sqrt{\mfp}) = S(\rho(\mfp))$.

 Similarly, if $\mfq=(q_{2m})_{m\geq 1}$ is a converging sequence of even real-rooted polynomials, then $\mfq^2:=(\halfeven_{m}(q_{2m}))_{m\geq 1}$ is a converging sequence of positive-rooted polynomials with $\rho\bigl(\mfq^2\bigr) = Q(\rho(\mfq))$.
\end{Proposition}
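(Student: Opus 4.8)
The plan is to reduce both statements to the elementary observation that the degree‑doubling and even‑part operations realize, at the level of empirical root distributions, the maps $S$ and $Q$ from Notation~\ref{nota:Q.measures}, after which everything follows from the weak continuity of $S$ and $Q$ recorded in Lemma~\ref{lem:Q.homeomorphism}.

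The first step is to establish the pointwise identities
\[ \rho\bigl(\lift_{k}(p)\bigr) = S\bigl(\rho(p)\bigr)\quad\text{for }p \in \P_k(\rr_{\geq 0}), \qquad \rho\bigl(\halfeven_{k}(q)\bigr) = Q\bigl(\rho(q)\bigr)\quad\text{for }q \in \P_{2k}^{E}(\rr). \]
For the first identity, recall from Notation~\ref{nota:DegreeDoubling} that the roots of $\lift_k(p)$ are precisely $\pm\sqrt{\lambda_i(p)}$ for $i=1,\dots,k$; writing $\rho(\lift_k(p))$ as the corresponding normalized sum of point masses and comparing with the definitions of the pushforwards $S_{\pm}$ yields $\rho(\lift_k(p)) = \frac{1}{2}\bigl(S_+(\rho(p))+S_-(\rho(p))\bigr) = S(\rho(p))$, the only point needing attention being a root $\lambda_i(p)=0$, which contributes a double root at $0$ to $\lift_k(p)$ and is matched by the fact that $S_+$ and $S_-$ both send $\delta_0$ to $\delta_0$. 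The second identity is dual: by Fact~\ref{fact:basic.Q}, $q = \lift_k(r)$ with $r = \halfeven_k(q) \in \P_k(\rr_{\geq 0})$, so the roots of $q$ come in pairs $\pm\sqrt{\lambda_i(r)}$ while the roots of $\halfeven_k(q)=r$ are the numbers $\lambda_i(r) = (\pm\sqrt{\lambda_i(r)})^2$; hence $\rho(\halfeven_k(q))$ is the pushforward of $\rho(q)$ along $t\mapsto t^2$, i.e.\ $Q(\rho(q))$ (again checking multiplicities at the origin). Alternatively one obtains the second identity from the first by applying $Q$ and using $Q\circ S = \mathrm{id}$.

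With these identities in hand the two assertions are immediate. For the first: $\lift$ of a positive‑rooted polynomial is even and real‑rooted by Fact~\ref{fact:basic.Q}, and the degrees of the polynomials $\lift(p_{2m})$ form a strictly increasing sequence of integers, so $\sqrt{\mfp}$ satisfies the first two clauses of Definition~\ref{def:converging}; since $\rho(p_{2m}) \to \rho(\mfp)$ weakly, the weak continuity of $S$ from Lemma~\ref{lem:Q.homeomorphism} together with the first identity gives $\rho(\lift(p_{2m})) = S(\rho(p_{2m})) \to S(\rho(\mfp))$ weakly, so $\sqrt{\mfp}$ is converging with $\rho(\sqrt{\mfp}) = S(\rho(\mfp))$. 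The second assertion follows in exactly the same way with the roles of $\lift$ and $\halfeven$, and of $S$ and $Q$, interchanged: $\halfeven_m(q_{2m})$ is positive‑rooted by Fact~\ref{fact:basic.Q}, its degree sequence increases, and by the weak continuity of $Q$ and the second identity, $\rho(\halfeven_m(q_{2m})) = Q(\rho(q_{2m})) \to Q(\rho(\mfq))$ weakly.

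There is no serious obstacle here; the only points requiring care are the bookkeeping of multiplicities at $0$ when verifying the two pointwise identities, and confirming that the transformed sequences genuinely meet all three clauses of Definition~\ref{def:converging} --- in particular the real‑rootedness, respectively positive‑rootedness, requirement, which is supplied for free by Fact~\ref{fact:basic.Q}.
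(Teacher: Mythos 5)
Your proof is correct and follows essentially the same route as the paper's: identify the empirical root distribution of the transformed polynomial as the pushforward ($S$ or $Q$) of the original one, then invoke weak continuity of pushforwards to pass to the limit. The only difference is that you verify the pointwise identities $\rho(\lift_{k}(p)) = S(\rho(p))$ and $\rho(\halfeven_{k}(q)) = Q(\rho(q))$ explicitly (including the multiplicity check at $0$), whereas the paper states them without proof; this is a welcome but inessential elaboration.
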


\begin{proof}
 The basic tool involved in the proof of this proposition is the fact \cite[Theorem 2.7]{Billingsley-convergence} that pushforwards along continuous functions preserve weak limits.

 For the first claim, the assumptions amount to $\lim_{m \to \infty} \rho(p_m) = \rho(\mfp)$. Then
 \[ \lim_{m \to \infty} S_+(\rho(p_m)) = S_+(\rho(\mfp)) \qquad \text{and}\qquad \lim_{m \to \infty} S_-(\rho(p_m)) = S_-(\rho(\mfp)) \]
 since $\rr_{\geq 0} \to \rr_{\geq 0} \colon t \mapsto \sqrt{t}$ and $\rr_{\geq 0} \to \rr_{\leq 0} \colon t \mapsto -\sqrt{t}$ are continuous. This shows that
 \[ \lim_{m \to \infty} S(\rho(p_m)) = S(\rho(\mfp)). \]
 Since $S(\rho(p_m)) = \rho(\lift_{m}(p_m))$, we have
 \[ \lim_{m \to \infty} \rho(\lift_{m}(p_m)) = S(\rho(\mfp)) , \]
 hence $\sqrt{\mfp} = (\lift_{m}(p_m)_{m \geq 1}$ is a converging sequence with $\rho(\sqrt{\mfp}) = S(\rho(\mfp))$.

 For the second claim, suppose that $\mfq = (q_{2m})_{m \geq 1}$ is a converging sequence of even real-rooted polynomials, i.e., $\lim_{m \to \infty} \rho(q_{2m}) = \rho(\mfq)$. Then since $\rr \to \rr_{\geq 0} \colon t \mapsto t^2$ is continuous, we have
 \[ \lim_{m \to \infty} Q(\rho(q_{2m})) = Q \bigl( \lim_{m \to \infty} \rho(q_{2m}) \bigr) = Q(\rho(\mfq)). \]
 Since $Q(\rho(q_{2m}) = \rho(\halfeven_{m}(q_{2m}))$, this shows that
 \[ \lim_{m \to \infty} \rho(\halfeven_{m}(q_{2m})) = Q(\rho(\mfq)) , \]
 i.e., $\mfq^2 = (\halfeven_{m}(q_{2m}))_{m \geq 1}$ is a converging sequence with $\rho\bigl(\mfq^2\bigr) = Q(\rho(\mfq))$.
\end{proof}

Using this result, we can study the limiting root distributions of even polynomial. Specifically, the examples from Section~\ref{sec:EvenHyper} involving hypergeometric polynomials can be studied asymptotically, using free convolutions and even measures. Recall from Theorem~\ref{thm:HypLimit} and \mbox{Notation}~\ref{nota:rhoparamseven} that the asymptotic root distribution of hypergeometric polynomials is given by a~$S$-rational measure \smash{$\rho \bigl[\begin{smallmatrix} b_1,\dots, b_j\\ a_1,\dots, a_i\end{smallmatrix} \bigr]$}, determined by having an $S$-transform of the form \smash{$\frac{(z+a_1)\cdots (z+a_i) }{(z+b_1)\cdots(z+b_j)}$}. Recall also that we denote the square root of an $S$-rational function by
\smash{$\rho^{E} \bigl[\!\begin{smallmatrix} b_1,\dots, b_j\\ a_1,\dots, a_i\end{smallmatrix}\! \bigr]$}.

We now state an analogue of Theorem~\ref{thm:HypLimit} that holds for even hypergeometric polynomials.

\begin{Theorem}
 \label{thm:EvenHypLimit}
 For non-negative integers $i$, $j$ consider tuples of parameters $\bm A=(A_1,\dots, A_i) \in (\rr\setminus [0,1))^i$ and $\bm B=(B_1,\dots, B_j)\in (\rr\setminus \{0\})^j$. Assume that $\mfp=(p_m)_{m\geq 0}$ is a sequence of polynomials given by
 \[
 p_m=\dil{\sqrt{m^{i-j}}}\HGPS{m}{\bm b_m}{\bm a_m} \in \PP^E_{2m}(\rr),
 \]
 where the tuples of parameters $\bm a_m \in \rr^i$ and $\bm b_m\in \rr^j$ have a limit given by
 $\lim_{m\to\infty} \bm a_m=\bm A$, and $\lim_{m\to\infty} \bm b_m=\bm B$.
 Then $\mfp$ is a converging sequence $($in the sense of Definition~{\rm\ref{def:converging})} that converges to the measure
\smash{$\rho^{E} \bigl[\begin{smallmatrix} B_1,\dots, B_j\\ A_1,\dots, A_i\end{smallmatrix}\bigr]\in \MM^E(\rr)$}.
\end{Theorem}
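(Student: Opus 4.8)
The plan is to reduce the statement to Theorem~\ref{thm:HypLimit} for ordinary hypergeometric polynomials, using Proposition~\ref{prop:Q.limit} to transport the limiting root distribution through the degree-doubling operation $\lift_m$.

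First I would record the elementary identity $\dil{\alpha}\lift_m(p)=\lift_m\bigl(\dil{\alpha^2}p\bigr)$, valid for any $p\in\P_m$ and any nonzero scalar $\alpha$, since both sides send $x$ to $\alpha^{2m}p(x^2/\alpha^2)$. Applying it with $\alpha=\sqrt{m^{i-j}}$ (a positive real, as $m\ge 1$, with $\alpha^2=m^{i-j}$) and recalling from Notation~\ref{not:even.hgp} that $\HGPS{m}{\bm b_m}{\bm a_m}=\lift_m\bigl(\HGP{m}{\bm b_m}{\bm a_m}\bigr)$, the polynomials in the statement become
\[
p_m=\dil{\sqrt{m^{i-j}}}\lift_m\bigl(\HGP{m}{\bm b_m}{\bm a_m}\bigr)=\lift_m(q_m),\qquad q_m:=\dil{m^{i-j}}\HGP{m}{\bm b_m}{\bm a_m}.
\]
Thus $p_m$ is exactly the degree-doubling of the normalized hypergeometric polynomial $q_m$ occurring in Theorem~\ref{thm:HypLimit}.

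Next I would verify the hypotheses of Theorem~\ref{thm:HypLimit} for $(q_m)_m$. The parameter conditions $\bm A\in(\rr\setminus[0,1))^i$, $\bm B\in(\rr\setminus\{0\})^j$ and the convergences $\bm a_m\to\bm A$, $\bm b_m\to\bm B$ are assumed verbatim; and since $q_m=\halfeven_m(p_m)$ (as $\halfeven_m\circ\lift_m=\mathrm{id}$) while $\halfeven_m$ maps $\PP^E_{2m}(\rr)$ into $\P_m(\rr_{\ge 0})$ (being the inverse of the bijection in Fact~\ref{fact:basic.Q}), the assumption $p_m\in\PP^E_{2m}(\rr)$ gives $q_m\in\P_m(\rr_{\ge 0})$. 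Theorem~\ref{thm:HypLimit} then shows $(q_m)_m$ is a converging sequence with $\rho\bigl((q_m)_m\bigr)=\SRM{B_1,\dots,B_j}{A_1,\dots,A_i}\in\MM(\rr_{\ge 0})$. Feeding this into Proposition~\ref{prop:Q.limit}, the sequence $\bigl(\lift_m(q_m)\bigr)_m=(p_m)_m$ is converging, consists of even real-rooted polynomials, and has $\rho\bigl((p_m)_m\bigr)=S\bigl(\SRM{B_1,\dots,B_j}{A_1,\dots,A_i}\bigr)$; by Notation~\ref{nota:rhoparamseven} this equals $\SRMS{B_1,\dots,B_j}{A_1,\dots,A_i}\in\MM^E(\rr)$, which is the assertion.

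The argument is essentially formal, so there is no serious obstacle; the only point demanding a little care is the gap between the strict positivity $\rr_{>0}$ appearing in the hypothesis of Theorem~\ref{thm:HypLimit} and the $\rr_{\ge 0}$ we actually obtain for $q_m$. A root of $\HGP{m}{\bm b_m}{\bm a_m}$ at $0$ (which can occur, e.g.\ for Laguerre-type parameters when some $m b_{m,s}$ is a nonnegative integer) is harmless: it remains at $0$ under $\lift_m$, and on the measure side such roots account exactly for the atom that $\SRM{B_1,\dots,B_j}{A_1,\dots,A_i}$ carries at $0$ when some $B_s\in(0,1)$, consistently with the continuity of $S$ from Lemma~\ref{lem:Q.homeomorphism}.
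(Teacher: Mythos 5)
Your proposal is correct and follows exactly the route the paper takes: its proof of Theorem~\ref{thm:EvenHypLimit} is the one-line ``Follows from Proposition~\ref{prop:Q.limit} and Theorem~\ref{thm:HypLimit}'', and you have simply filled in the details (the dilation identity $\dil{\alpha}\lift_m(p)=\lift_m\bigl(\dil{\alpha^2}p\bigr)$ and the identification $q_m=\halfeven_m(p_m)$). Your closing remark about the $\rr_{>0}$ versus $\rr_{\geq 0}$ discrepancy in the hypothesis of Theorem~\ref{thm:HypLimit} is a fair observation that the paper itself glosses over, and your treatment of it is reasonable.
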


\begin{proof}
Follows from Proposition~\ref{prop:Q.limit} and Theorem~\ref{thm:HypLimit}.
\end{proof}

\begin{Example}[Hermite polynomials and Laguerre polynomials]
 \label{exa:HermiteSquared}
 In free probability, it is known \cite[Proposition 12.13]{NS-book} that the square of a semicircular element has a free Poisson distribution with rate $1$. We can observe a similar phenomenon by applying our operations $\halfeven_m$ and $\lift_m$ to Hermite and Laguerre polynomials; namely, it corresponds to the observation in Example \ref{exa:LaguerreHermite} that Hermite polynomials are related to certain Laguerre polynomials with squared variables.

 More precisely, recall that the respective finite analogues of the free Poisson distribution with rate $1$ and the semicircular distribution with rate $2$ are the polynomials
 \[ L_n^{(1)}(x) = n^{-n} \HGP{n}{1}{\cdot}(nx) \qquad \text{and} \qquad H_{2m}(x) = m^{-m} \HGP{m}{1-\frac{1}{2m}}{\cdot}\bigl(mx^2\bigr). \]
 We have
 \[ \halfeven_{m}(H_{2m}) = m^{-m} \HGP{m}{1-\frac{1}{2m}}{\cdot}(mx) \]
 and since $\lim_{m \to \infty} (1-\frac{1}{2m}) = 1$, by Theorem~\ref{thm:HypLimit}, we have
 \[ \lim_{m \to \infty} \rho(\halfeven_{m}(H_{2m})) = \SRM{1}{\cdot} = \lim_{n \to \infty} \rho\bigl(L_n^{(1)}\bigr). \]
 We thus recover the aforementioned result concerning the square of a semicircular variable.
\end{Example}

\subsection{Examples of free symmetrization}

The symmetrization operation from Notation~\ref{not:sym} tends to the corresponding symmetrization operation in free probability, defined as follows:

\begin{Notation}
 \label{not:free.sym}
 For $\mu \in \MM(\rr)$, define the free symmetrization of $\mu$ by
 \[ \mathrm{Sym}(\mu) := \mu \boxplus (\dil{-1} \mu) \in \MM^E(\rr). \]
\end{Notation}

\begin{Remark}
 \label{rem:free.sym}
 Despite being a simple operation, to the best of our knowledge $\mathrm{Sym}(\mu)$ has not been studied systematically in free probability. Some particular instances where this operation has appeared include \cite[Final remarks]{hinz2010multiplicative}, \cite[p.~274]{mlotkowski2011symmetrization}, \cite[Example 25]{perez2012free}, and \cite[Example 7.9\,(3)]{AHS13}.

 Notice also that the term of \emph{free symmetrization} has appeared before in the literature, but denoting a different notion, like in \cite[Section 4]{hinz2010multiplicative}. It is unclear if there is a connection between both notions.
\end{Remark}

It would be interesting to study $\mathrm{Sym}(\mu)$ in detail, due to its potential connections to the commutator. Below we present some basic properties, that are analogous to those of the symmetrization of polynomials.

\begin{Lemma}
 \label{lem:symmetrization.meas}
 Let $\mu,\nu \in \MM(\rr)$ and $\alpha \in \rr$, and let $\delta_{\alpha}$ be the Dirac measure with and atom of mass one in $\alpha$. Then
 \begin{enumerate}\itemsep=0pt
 \item[$(1)$] $\mathrm{Sym}(\mu) \in \MM^E$;
 \item[$(2)$] $\mathrm{Sym}(\mu \boxplus \nu) = \mathrm{Sym}(\mu) \boxplus \mathrm{Sym}(\nu)$;
 \item[$(3)$] $\mathrm{Sym}(\dil{\alpha} \mu) = \dil{\alpha} \mathrm{Sym}(\mu)$;
 \item[$(4)$] $\mathrm{Sym}(\mu \boxplus \delta_{\alpha}) = \mathrm{Sym}(\mu)$;
 \item[$(5)$] $\halfeven(\mathrm{Sym}(\mu)) \in \MM(\rr_{\geq 0})$.
 \end{enumerate}
\end{Lemma}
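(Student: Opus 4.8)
The plan is to carry the proof of Lemma~\ref{lem:basic.symmetrization} over essentially verbatim, replacing $\P_n$ by $\MM(\rr)$, the finite free additive convolution $\boxplus_n$ by Voiculescu's free additive convolution $\boxplus$, the constant polynomial $c_\alpha$ by the Dirac mass $\delta_\alpha$, and the degree-halving map $\halfeven_m$ by the pushforward $Q$ of Notation~\ref{nota:Q.measures}. The only structural properties of $\boxplus$ that will enter are: it is commutative and associative; $\delta_0$ is its neutral element and $\delta_\alpha\boxplus\delta_\beta=\delta_{\alpha+\beta}$; and it is compatible with dilations, in the sense that $\dil{\alpha}(\mu\boxplus\nu)=(\dil{\alpha}\mu)\boxplus(\dil{\alpha}\nu)$ for every nonzero $\alpha$, together with $\dil{\alpha}\dil{\beta}=\dil{\alpha\beta}$. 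The dilation compatibility can be read off the $R$-transform identity $R_{\dil{\alpha}\mu}(z)=\alpha R_{\mu}(\alpha z)$, or, at the level of random variables, from the fact that if $X,Y$ are free with laws $\mu,\nu$ then $\alpha X,\alpha Y$ are free with laws $\dil{\alpha}\mu,\dil{\alpha}\nu$ and $\alpha(X+Y)=\alpha X+\alpha Y$.

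Granting these facts, each item is a one-line computation. For (1): $\dil{-1}\mathrm{Sym}(\mu)=\dil{-1}\bigl(\mu\boxplus\dil{-1}\mu\bigr)=(\dil{-1}\mu)\boxplus\mu=\mathrm{Sym}(\mu)$, using compatibility with $\dil{-1}$, the identity $\dil{-1}\dil{-1}=\mathrm{id}$, and commutativity; hence $\mathrm{Sym}(\mu)$ is invariant under $t\mapsto -t$, i.e.\ it lies in $\MM^E(\rr)$. For (2): both sides equal $\mu\boxplus\dil{-1}\mu\boxplus\nu\boxplus\dil{-1}\nu$ after applying $\dil{-1}(\mu\boxplus\nu)=(\dil{-1}\mu)\boxplus(\dil{-1}\nu)$ and reassociating. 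For (3): $\mathrm{Sym}(\dil{\alpha}\mu)=\dil{\alpha}\mu\boxplus\dil{-1}\dil{\alpha}\mu=\dil{\alpha}\mu\boxplus\dil{\alpha}\dil{-1}\mu=\dil{\alpha}\bigl(\mu\boxplus\dil{-1}\mu\bigr)=\dil{\alpha}\mathrm{Sym}(\mu)$. For (4): first $\mathrm{Sym}(\delta_\alpha)=\delta_\alpha\boxplus\delta_{-\alpha}=\delta_0$, so by (2) we get $\mathrm{Sym}(\mu\boxplus\delta_\alpha)=\mathrm{Sym}(\mu)\boxplus\mathrm{Sym}(\delta_\alpha)=\mathrm{Sym}(\mu)\boxplus\delta_0=\mathrm{Sym}(\mu)$. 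Finally, (5) is immediate from (1): by (1) we have $\mathrm{Sym}(\mu)\in\MM^E(\rr)$, so $\halfeven(\mathrm{Sym}(\mu))=Q(\mathrm{Sym}(\mu))$ is defined and, being the pushforward of $\mathrm{Sym}(\mu)$ along $t\mapsto t^2$ (Notation~\ref{nota:Q.measures}), is supported on $\rr_{\geq 0}$; this is the measure analogue of Fact~\ref{fact:basic.Q}.

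I do not anticipate a genuine obstacle: the argument is purely formal and parallels the polynomial case in Lemma~\ref{lem:basic.symmetrization}. The only points worth a line of care are the compatibility of $\boxplus$ with dilations used in (1)--(3), and the fact that commutativity, associativity, and the neutrality of $\delta_0$ continue to hold for $\boxplus$ on all of $\MM(\rr)$ --- not merely for compactly supported measures --- which is standard from the analytic/subordination description of free additive convolution.
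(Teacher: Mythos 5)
Your proposal is correct and is exactly the argument the paper intends: the paper's proof of Lemma~\ref{lem:symmetrization.meas} simply states that it is analogous to Lemma~\ref{lem:basic.symmetrization} with polynomials replaced by measures and $\boxplus_n$ by $\boxplus$, which is precisely what you carry out. Your explicit verification of the structural properties of $\boxplus$ (associativity, commutativity, neutrality of $\delta_0$, compatibility with dilations) is a reasonable fleshing-out of what the paper leaves implicit.
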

\begin{proof}
 The proof is analogous to that of Lemma \ref{lem:basic.symmetrization}; one just needs to substitute the polynomials for measures, and change $\boxplus_n$ to $\boxplus$.
\end{proof}

Alternatively one could prove Lemma \ref{lem:symmetrization.meas} by letting $n\to\infty$ in Lemma \ref{lem:basic.symmetrization}, and noticing that the symmetrization of polynomials tends to the free symmetrization. Since this last fact will we useful later, let us state it explicitly.

\begin{Lemma}
 Let $\mfp=(p_m)_{m\geq 1} \subset \P(\rr_{\geq 0})$ be a converging sequence of polynomials. Then $\mathrm{Sym} (\mfp):=(\mathrm{Sym}(p_m))_{m\geq 1} \subset \P^E(\rr)$ is also a converging sequence of polynomials, and $\rho(\operatorname{Sym}(\mfp))\allowbreak = \operatorname{Sym}(\rho(\mfp))$.
\end{Lemma}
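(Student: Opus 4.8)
The plan is to deduce this from the already-established facts about additive finite free convolution, dilation, and their asymptotic behavior. First I would recall the key identity $\mathrm{Sym}(p_m) = p_m \boxplus_m (\dil{-1} p_m)$ from Notation~\ref{not:sym}. The sequence $\mfp = (p_m)_{m\geq 1}$ is converging with limit $\rho(\mfp)$, and I need to check that $(\dil{-1} p_m)_{m\geq 1}$ is also converging. This is clear: dilation by $-1$ is a pushforward along the continuous map $t \mapsto -t$, so $\rho(\dil{-1} p_m) = \dil{-1}\rho(p_m) \to \dil{-1}\rho(\mfp)$ weakly, using \cite[Theorem 2.7]{Billingsley-convergence} as in the proof of Proposition~\ref{prop:Q.limit}. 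Hence $(\dil{-1} p_m)_{m\geq 1}$ is a converging sequence with $\rho\bigl((\dil{-1}p_m)_{m}\bigr) = \dil{-1}\rho(\mfp)$.

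Next, since $\mfp \subset \P(\rr_{\geq 0}) \subset \P(\rr)$ and $\dil{-1}\mfp \subset \P(\rr_{\leq 0}) \subset \P(\rr)$, both sequences consist of real-rooted polynomials. I can therefore apply Theorem~\ref{thm:finiteAsymptotics}(i): the sequence $\bigl(p_m \boxplus_m (\dil{-1} p_m)\bigr)_{m\geq 1} = (\mathrm{Sym}(p_m))_{m\geq 1}$ converges weakly in empirical root distribution to $\rho(\mfp) \boxplus \dil{-1}\rho(\mfp)$, which by Notation~\ref{not:free.sym} is exactly $\operatorname{Sym}(\rho(\mfp))$. Finally, by Lemma~\ref{lem:basic.symmetrization}(1) each $\mathrm{Sym}(p_m)$ lies in $\P_m^E$, so $\mathrm{Sym}(\mfp) \subset \P^E(\rr)$ (real-rootedness of the limit polynomials is automatic since we are taking additive convolutions of real-rooted polynomials, by Theorem~\ref{thm:realrootedness}(1)). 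This gives both assertions: $\mathrm{Sym}(\mfp)$ is a converging sequence and $\rho(\operatorname{Sym}(\mfp)) = \operatorname{Sym}(\rho(\mfp))$.

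There is no real obstacle here; the statement is essentially a bookkeeping exercise combining three tools already in hand (continuity of pushforward under dilation, the asymptotic compatibility of $\boxplus_m$ with $\boxplus$ from Theorem~\ref{thm:finiteAsymptotics}, and the definitions of the two symmetrization operations). The only minor point of care is to ensure that the hypotheses of Theorem~\ref{thm:finiteAsymptotics}(i) are met — namely that \emph{both} $\mfp$ and $\dil{-1}\mfp$ are converging sequences of \emph{real-rooted} polynomials indexed by the full sequence of degrees — which is immediate from $\mfp \subset \P(\rr_{\geq 0})$. One could alternatively phrase the whole argument at the level of Cauchy transforms, but invoking Theorem~\ref{thm:finiteAsymptotics} directly is cleaner and avoids re-deriving convergence of convolutions.
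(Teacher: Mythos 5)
Your proposal is correct and follows essentially the same route as the paper, which likewise deduces the claim from Theorem~\ref{thm:finiteAsymptotics} together with the observation that $(\dil{-1} p_m)_{m\geq 1}$ is a converging sequence with limit $\dil{-1}\rho(\mfp)$. You have merely spelled out the details (continuity of the pushforward under $t\mapsto -t$, real-rootedness of both input sequences) that the paper leaves implicit.
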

\begin{proof}
The claim follows from Theorem~\ref{thm:finiteAsymptotics} and the fact that $(\dil{-1} p_m)_{m\geq 1}$ is a converging sequence of polynomials with limit $\dil{-1}\rho(\mfp)$.
\end{proof}

As a direct application of this lemma we can take limits in Table~\ref{tab:sym.hgp} to obtain their analogues in free probability. We present these results in Table~\ref{tab:sym.meas}.

\begin{table}[ht]\renewcommand{\arraystretch}{1.8}
 \centering
 \begin{tabular}{|c|c|c|}
 \hline Measure & $\mu$ & $\mathrm{Sym}(\mu)$ \\
 \hline \hline
 MP & $\SRM{b}{\cdot}$ & $\dil{2} \SRMS{2b, 1}{\cdot}$ \\
 \hline
 RMP & $\SRM{\cdot}{a}$ & $\dil{i} \SRMS{1}{2a, a, a}$ \\
 \hline
 Free beta & $\SRM{b}{a}$ & $\SRMS{1, 2b, 2a-2b}{2a, a, a}$ \\
 \hline MP $\boxtimes$ MP & $\SRM{b, d}{\cdot}$ & $\dil{4} \SRMS{1, 2b, 2d, b+ d, b+ d}{2b+2d}$ \\
 \hline
 RMP $\boxtimes$ RMP & $\SRM{\cdot}{a, c}$ & $\dil{\frac{8i}{3\sqrt{3}}} \SRMS{\frac{2}{3}\left(a+c\right), \frac{2}{3}\left(a+c\right), \frac{2}{3}\left(a+c\right), 1}{2a, 2c, a, c, a, c, a+c, a+c} $ \\
 \hline
 \end{tabular}
 \caption{Symmetrization of $S$-rational measures. Here MP stands for Marchenko--Pastur, while RMP stands for reversed Marchenko--Pastur distribution.} \label{tab:sym.meas}
\end{table}

Similarly, in Table~\ref{tab:sum.meas} we compute the free additive convolution of some symmetric measures. These results follow from letting $m\to\infty$ in Table~\ref{tab:sum.hgp}.

\begin{table}[ht]\renewcommand{\arraystretch}{1.8}
 \centering
 \begin{tabular}{|c|c|c|}
 \hline $\mu$ & $\nu$ & $\mu\boxplus \nu$ \\
 \hline \hline
 $\SRMS{1}{a_1}$ & $\SRMS{1}{a_2}$ & $\dil{2} \SRMS{\frac{a_1+a_2}{2}, \frac{a_1+a_2}{2}, 1}{a_1, a_2, a_1+a_2}$ \\
 \hline $\dil{\sqrt{c_1}} \SRMS{1}{\cdot}$ & $\dil{\sqrt{c_2}} \SRMS{1}{\cdot}$ & $\dil{\sqrt{c_1+c_2}} \SRMS{1}{\cdot}$ \\
 \hline
 $\SRMS{b_1, 1}{\cdot}$ & $\SRMS{b_2, 1}{\cdot}$ & $\SRMS{b_1+b_2, 1}{\cdot}$ \\
 \hline
 $\SRMS{b_1+b_2-a, 1}{\cdot}$ & $\SRMS{a-b_1, a-b_2, 1}{a}$ & $\SRMS{b_1, b_2, 1}{a}$ \\
 \hline
 $\SRMS{b_1, b_2, 1}{b_1+b_2}$ & $\SRMS{b_1, b_2, 1}{b_1+b_2}$ & $\SRMS{2b_1, 2b_2, 1}{2(b_1+b_2)}$ \\
 \hline
 \end{tabular}
 \caption{Free additive convolutions of even measures.} \label{tab:sum.meas}
\end{table}

Notice that row~2 in Table~\ref{tab:sum.meas} is just the well-known fact that the free convolution of semicircular measures yields another semicircular measure.

Also, recall that if $\mu\in\MM^E(\rr)$ than $\mathrm{Sym}(\mu)=\mu\boxplus \mu$. Thus, when we let $\mu=\nu$ in Table~\ref{tab:sum.meas} we obtain results on the symmetrization of even measures.

A particular case, that will be useful later, is when we let $a_1=a_2=1$ in row~1 of Table~\ref{tab:sum.hgp}. This corresponds to studying the limit of Example \ref{exa:BernoulliAddition}, and recovers the well-known fact that the additive convolution of Bernoulli distributions gives an arcsine distribution.

\subsection{Examples of commutators}

\begin{Example}[continuation of Example \ref{exa:HermiteCommutator}]
 \label{exa:HermCommLimit}
 In Example \ref{exa:HermiteCommutator}, we found that
 \[ H_{2m} \square_{2m} H_{2m} = \dil{\frac{1}{m}} \HGPS{m}{2, 2, 1-\frac{1}{2m}}{2+\frac{1}{m}} \in \P_{2m}(\rr). \]
 In \cite{NS98}, it is shown that the commutator of freely independent semicircular elements has the same distribution as the difference of two freely independent free Poisson elements. Our computation matches this in the limit: from Table~\ref{tab:sym.hgp}, we have
 \[ \mathrm{Sym}\bigl(L_{2m}^{(1)}\bigr) = \dil{\frac{1}{m}} \HGPS{m}{2, 1-\frac{1}{2m}}{\cdot} \]
 so
 \[ \lim_{m \to \infty} \rho(H_{2m} \square_{2m} H_{2m}) = \dil{\frac{1}{m}} \SRMS{2, 1}{\cdot} = \lim_{m \to \infty} \rho\bigl(\mathrm{Sym}\bigl(L_{2m}^{(1)}\bigr)\bigr). \]
\end{Example}
\begin{Example}[continuation of Example \ref{exa:HermProjCommutator}] \label{exa:HermProjCommLimit}
 In Example \ref{exa:HermProjCommutator}, we found that
 \[ H_{2m} \square_{2m} R_{2m}^{(m)} = \dil{\frac{1}{\sqrt{2m}}} \HGPS{m}{1, 2}{2+\frac{1}{m}} \in \P_{2m}(\rr). \]
 In \cite{NS98}, it is shown that the free commutator of a semicircular element and a projection with trace $1/2$ has a semicircular distribution with radius $\sqrt{2}$. In the limit, the formula above gives
 \begin{align*}
 \lim_{m \to \infty} \rho\bigl(H_{2m} \square_{2m} R_{2m}^{(m)}\bigr) &= \lim_{m \to \infty} \rho \Bigl( \dil{\frac{1}{\sqrt{2m}}} \HGPS{m}{1, 2}{2+\frac{1}{m}} \Bigr) = \lim_{m \to \infty} \rho \Bigl( \dil{\frac{1}{\sqrt{2m}}} \HGPS{m}{1}{\cdot} \Bigr) \\
 &= \lim_{m \to \infty} \rho \Bigl( \dil{\frac{1}{\sqrt{2m}}} \HGPS{m}{1-\frac{1}{2m}}{\cdot} \Bigr) = \lim_{m \to \infty} \rho\Bigl( \dil{\frac{1}{\sqrt{2}}} H_{2m}\Bigr),
 \end{align*}
 which is semicircular with radius $\sqrt{2}$.
\end{Example}

\begin{Example}[continuation of Example \ref{exa:LaguerreCommutator}] \label{exa:LaguCommLimit}
 In Example \ref{exa:LaguerreCommutator}, we found that
 \[ L_{2m}^{(\lambda)} \square_{2m} L_{2m}^{(\mu)} = \dil{\frac{1}{2m^2}} \HGPS{m}{2\lambda, 2\mu, 2, 2, 1-\frac{1}{2m}}{2+\frac{1}{m}} \in \P_{2m}(\rr). \]
 Asymptotically, we then have
 \begin{align*}
 \lim_{m \to \infty} \rho\bigl(L_n^{(\lambda)} \square_{2m} L_n^{(\mu)}\bigr) &= \lim_{m \to \infty} \rho \Bigl( \dil{\frac{1}{2m^2}} \HGPS{m}{2\lambda, 2\mu, 2, 2, 1-\frac{1}{2m}}{2+\frac{1}{m}} \Bigr) \\
 &= \lim_{m \to \infty} \rho \left( \dil{\frac{1}{2m^2}} \HGPS{m}{2\lambda, 2\mu, 2, 1}{\cdot} \right).
 \end{align*}
 The free commutator of free Poisson variables with parameters $\lambda$ and $\mu$ is not explicitly computed in \cite{NS98}, but the pieces are all there: the $S$-transform of the square of this free commutator comes out to
 \[ \frac{4}{(z+2\lambda)(z+2\mu)(z+2)(z+1)}, \]
 which matches the limiting $S$-transform above.
\end{Example}

\begin{Example}[continuation of Example \ref{exa:ProjBernComm}] \label{exa:BernCommLimit}
 It was established in Example~\ref{exa:ProjBernComm} that with ${B_{2m}(x) := \bigl(x^2-1\bigr)^m}$, we have
 \[ B_{2m} \square_{2m} B_{2m} = \dil{2} \HGPS{m}{1, 1}{1-\frac{1}{2m}, 2+\frac{1}{m}} \in \P_{2m}(\rr). \]
 From \cite{NS98}, it is known that with $\mu = \frac{1}{2}(\delta_1+\delta_{-1})$, one has $\mu \square \mu = \mu \boxplus \mu$. The computation of the latter \cite[Example 12.8]{NS-book} is a standard example in free probability: $\mu \boxplus \mu$ is identified as the so-called \emph{arcsine} distribution.

 We can test our finite result by taking the limit of empirical root distributions:
 \[ \lim_{m \to \infty} \rho(B_{2m} \square_{2m} B_{2m}) = \dil{2} \SRMS{1}{2} = \lim_{m \to \infty} \rho(\mathrm{Sym}(B_{2m})). \]
 Since $B_{2m}$ is even, $\mathrm{Sym}(B_{2m}) = B_{2m} \boxplus_{2m} B_{2m}$, so we recover the corresponding result from free probability.
\end{Example}

\begin{Example}[continuation of Example \ref{exa:ProjBernComm}]
 \label{exa:ProjCommLimit}
 Recall from Example \ref{exa:ProjBernComm} that
 \[ R_{2m}^{(m)} \square_{2m} R_{2m}^{(m)} = \dil{\frac{1}{2}} \HGPS{m}{1, 1}{1-\frac{1}{2m}, 2+\frac{1}{m}} .\]
 In \cite{NS98}, it is shown that the commutator of free projections with trace $\frac{1}{2}$ has the arcsine distribution on the interval $\bigl[-\frac{1}{2},\frac{1}{2}\bigr]$. This distribution may be described as a scaled version of the free additive convolution of the measure $\frac{1}{2}(\delta_{-1}+\delta_1)$ with itself. From the formula above, we have
 \begin{align*}
 \lim_{m \to \infty} \rho\bigl( R_{2m}^{(m)} \square_{2m} R_{2m}^{(m)}\bigr) &= \lim_{m \to \infty} \rho \left( \dil{\frac{1}{2}} \HGPS{m}{1, 1}{1-\frac{1}{2m}, 2+\frac{1}{m}} \right) = \dil{\frac{1}{2}} \SRMS{1}{2}.
 \end{align*}
 Since this is the arcsine distribution, we recover the result from \cite{NS98}.
\end{Example}

\section[Summary of results in iFj notation]{Summary of results in $\boldsymbol{{}_iF_j}$ notation}\label{sec:results.standard.notation}

In this section, we compile the main results for even hypergeometric polynomials from Section~\ref{sec:EvenHyper}. The difference is that now we use the standard notation coming from hypergeometric functions.

\begin{Notation}[even hypergeometric polynomials]\label{not:even.hgp.new}
 Given $i,j,m\in \nn$, $\bm a =(a_1, \dots, a_i) \in \rr^i$ and~${\bm b =(b_1, \dots, b_j)\in \rr^j}$, an even hypergeometric polynomial is a polynomial of the form
\[\HGF{i+1}{j}{-m,\bm a }{\bm b }{x^2}=\sum_{k=0}^{m}\frac{\raising{-m}{k}\raising{\bm a}{k}}{\raising{\bm b}{k}} \frac{ x^{2k}}{k!}.\]
\end{Notation}

\begin{Example}[Bernoulli and Hermite polynomials]
\label{exm:Bernoulli.Herm}
The simplest even hypergeometric polynomial is the Bernoulli polynomial from Example \ref{exm:Bernoulli}
\[ B_{2m}(x)=\HGF{1}{0}{-m}{\cdot}{x^2}=\bigl(1-x^2\bigr)^m. \]
Another important example are the Hermite polynomials
\[ H_{2m}(x) = \HGF{i+1}{j}{-m,}{\frac{1}{2}}{\frac{x^2}{m}}. \]
\end{Example}

In the following result, we adapt Theorem~\ref{thm:add.hyper.general}, which is a generalization of the last part of Theorem~\ref{thm:MFMP}.

\begin{Theorem}[additive convolution of hypergeometric polynomials]\label{thm:add.hyper.general.new}
 Let $c_1,c_2,c_3\in \rr$ be constants, and let $l_1,l_2,l_3,n\in \nn$ be numbers such that $l_k$ divides $n$ for $k=1,2,3$. Consider tuples~$\bm a_1$, $\bm a_2$, $\bm a_3$, $\bm b_1$, $\bm b_2$, $\bm b_3 $ of sizes $i_1,i_2,i_3,j_1,j_2,j_3 \in \nn$, and assume that
 \[ \HGF{j_1}{i_1}{\bm b_1}{\bm a_1}{c_1 x^{l_1} } \HGF{j_2}{i_2}{\bm b_2}{\bm a_2}{c_2 x^{l_2}}=\HGF{j_3}{i_3}{\bm b_3}{\bm a_3}{c_3x^{l_3}}. \]
 Then, if for $k=1,2,3$ we consider the polynomials
 \[ p_k(x)= \alpha_k \HGF{i_k+1}{j_k+l_k-1}{-\frac{n}{l_k},1-\frac{n}{l_k}- \bm a_k}{1-\frac{n}{l_k}- \bm b_k, \frac{1}{l_k}, \frac{2}{l_k} , \dots, \frac{l_k-1}{l_k}} { \frac{(-1)^{i_k+j_k+1}}{l_k^{l_k}c_k} x^{l_k}}, \]
 where $\alpha_k$ is just a constant to make the polynomial monic, then we get that $p_1 \boxplus_n p_2 = p_3$.
\end{Theorem}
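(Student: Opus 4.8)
The plan is to deduce Theorem~\ref{thm:add.hyper.general.new} directly from Theorem~\ref{thm:add.hyper.general} by a change of parameters, using the dictionary between the polynomials $\HGP{m}{\bm b}{\bm a}$ and terminating generalized hypergeometric series recorded in the Remark following Definition~\ref{def:H.polynomials}, namely $\HGP{m}{\bm b}{\bm a}(x)=\mathrm{const}\cdot \HGF{i+1}{j}{-m,\,\bm a m-m+1}{\bm b m-m+1}{x}$.

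First I would translate the hypothesis into the language of Theorem~\ref{thm:add.hyper.general}. Setting $\widetilde{\bm a}_k:=-\tfrac1n\bm a_k$ and $\widetilde{\bm b}_k:=-\tfrac1n\bm b_k$ for $k=1,2,3$, the assumed product identity $\HGF{j_1}{i_1}{\bm b_1}{\bm a_1}{c_1x^{l_1}}\HGF{j_2}{i_2}{\bm b_2}{\bm a_2}{c_2x^{l_2}}=\HGF{j_3}{i_3}{\bm b_3}{\bm a_3}{c_3x^{l_3}}$ becomes exactly the hypothesis of Theorem~\ref{thm:add.hyper.general} with $\widetilde{\bm a}_k,\widetilde{\bm b}_k$ in place of $\bm a_k,\bm b_k$ (the divisibility condition $l_k\mid n$ and the tuple sizes are unchanged, and Theorem~\ref{thm:add.hyper.general} imposes no further restriction on the parameters). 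That theorem then gives $q_1\boxplus_n q_2=q_3$, where
\[ q_k(x)=\bigl((-1)^{i_k+j_k+1}l_k^{l_k}c_k\bigr)^{n/l_k}\HGP{n/l_k}{l_k\widetilde{\bm b}_k,\,1-\tfrac1n,\dots,1-\tfrac{l_k-1}{n}}{l_k\widetilde{\bm a}_k}\!\left(\frac{(-1)^{i_k+j_k+1}}{l_k^{l_k}c_k}x^{l_k}\right). \]

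Next I would rewrite each $q_k$ in ${}_iF_j$ notation by applying the Remark formula with degree $m=n/l_k$, lower tuple $l_k\widetilde{\bm a}_k$ (of length $i_k$) and upper tuple $(l_k\widetilde{\bm b}_k,1-\tfrac1n,\dots,1-\tfrac{l_k-1}{n})$ (of length $j_k+l_k-1$). A short bookkeeping computation of the affine rescaling "$\bm c\mapsto \bm c m-m+1$" shows that $l_k\widetilde{\bm a}_k\cdot m-m+1=1-\tfrac{n}{l_k}-\bm a_k$, that the first $j_k$ lower parameters become $1-\tfrac{n}{l_k}-\bm b_k$, and that each auxiliary upper parameter $1-\tfrac rn$ with $1\le r\le l_k-1$ turns into $\tfrac{l_k-r}{l_k}$, i.e.\ the set $\{\tfrac1{l_k},\dots,\tfrac{l_k-1}{l_k}\}$; the argument $\tfrac{(-1)^{i_k+j_k+1}}{l_k^{l_k}c_k}x^{l_k}$ is unchanged. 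Moreover, composing the outer constant $\bigl((-1)^{i_k+j_k+1}l_k^{l_k}c_k\bigr)^{n/l_k}$ with the dilation by $\tfrac{(-1)^{i_k+j_k+1}}{l_k^{l_k}c_k}$ gives leading coefficient $\bigl((-1)^{2(i_k+j_k+1)}\bigr)^{n/l_k}=1$, so $q_k$ is monic of degree $n$. Hence $q_k$ equals the displayed ${}_iF_j$ polynomial (which has constant term $1$) scaled by the unique constant $\alpha_k$ making it monic; that is, $q_k=p_k$, and therefore $p_1\boxplus_n p_2=p_3$.

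I do not expect a genuine obstacle here: the statement is a repackaging of Theorem~\ref{thm:add.hyper.general}, so the only part demanding care is the bookkeeping of the parameter shifts and the verification that the accumulated leading constants collapse to monicity, which is exactly why $\alpha_k$ can be left implicit in the statement as "the constant making the polynomial monic." In writing it up I would present the parameter computation as one short displayed chain and state monicity as a one-line remark rather than tracking the explicit value of $\alpha_k$.
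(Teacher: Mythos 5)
Your proposal is correct and matches the paper's approach: the paper's own ``proof'' of Theorem~\ref{thm:add.hyper.general.new} is simply a pointer back to Theorem~\ref{thm:add.hyper.general} (proved via the differential-operator definition of $\boxplus_n$), the new statement being a restatement in ${}_iF_j$ notation. You make explicit the parameter dictionary $\bm a_k\mapsto -\tfrac1n\bm a_k$, the affine shift $c\mapsto cm-m+1$ from the Remark after Definition~\ref{def:H.polynomials}, and the collapse of the leading constants to monicity, all of which check out; the paper leaves this bookkeeping implicit.
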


The proof is given in Section~\ref{sec:EvenHyper}, the main idea is to use the definition of $\boxplus_n$ in terms of differential operators given in equation \eqref{thirdDefAdditiveConv}. We can use Theorem~\ref{thm:add.hyper.general.new} and Proposition~\ref{prop:EvenAdd} to compute the additive convolution of some even hypergeometric polynomials, the results are summarized in Table~\ref{tab:sum.hgp.new}, which is just a translation of Table~\ref{tab:sum.hgp}.

\begin{table}[ht]\renewcommand{\arraystretch}{1.8}
 \centering
 \begin{tabular}{|c|c|c|}
 \hline $p$ & $q$ & $p\boxplus_{2m} q$ \\
 \hline \hline
 $\HGF{2}{1}{-m, a_1}{\frac{1}{2}}{x^2}$ & $\HGF{2}{1}{-m, a_2}{\frac{1}{2}}{x^2}$ & \small $\HGF{4}{3}{-m, a_1, a_2, a_1+a_2+m}{\frac{a_1+a_2}{2}, \frac{a_1+a_2+1}{2}, \frac{1}{2}}{4x^2}$ \\
 \hline
 $\HGF{1}{1}{-m}{\frac{1}{2}}{c_1x^2}$ & $\HGF{1}{1}{-m}{\frac{1}{2}}{c_2x^2}$ & $\HGF{1}{1}{-m}{\frac{1}{2}}{(c_1+c_2)x^2}$ \\
 \hline $ \HGF{1}{2}{-m}{b_1, \frac{1}{2}}{x^2}$ & $ \HGF{1}{2}{-m}{b_2, \frac{1}{2}}{x^2}$ & $ \HGF{1}{2}{-m}{b_1+b_2, \frac{1}{2}}{x^2}$ \\
 \hline
 \small$ \HGF{1}{2}{-m}{b_1+b_2-a, \frac{1}{2}}{x^2}$ & \small $ \HGF{2}{3}{-m, a}{a-b_1+m-1, a-b_2+m-1, \frac{1}{2}}{x^2}$ & $ \HGF{1}{2}{-m, a}{b_1, b_2 , \frac{1}{2}}{x^2}$ \\
 \hline
 \small$ \HGF{2}{3}{-m, b_1+b_2-m+\frac{3}{2}}{b_1, b_2, \frac{1}{2}}{x^2}$ &\small$ \HGF{2}{3}{-m, b_1+b_2-m+\frac{3}{2}}{b_1, b_2, \frac{1}{2}}{x^2}$ &\scriptsize$ \HGF{3}{4}{-m, b_1+b_2-m+\frac{3}{2}, 2(b_1+b_2)-3m+3}{2b_1-m+1, b_1+b_2-m+1, 2b_2-m+1, \frac{1}{2}}{x^2}$\\
 \hline
 \end{tabular}
 \caption{Sum of even hypergeometric polynomials.}
 \label{tab:sum.hgp.new}
\end{table}

A special instance of Theorem~\ref{thm:add.hyper.general.new} allows us to compute the symmetrization of some hypergeometric polynomials. Recall from Section~\ref{ssec:symmetrization} that the symmetrization of $p \in \P_n$ is defined as
$\mathrm{Sym}(p):=p\boxplus_n (\dil{-1} p )$.
Then, Lemma \ref{lem:symmetrization.H} is equivalent to the following.

\begin{Lemma}\label{lem:symmetrization.H.new}
 Consider tuples $\bm a$, $\bm a'$, $\bm b$, $\bm b'$ of sizes $i$, $i'$, $j$, $j'$, and assume that
 \[
 \HGF{j}{i}{\bm b}{\bm a}{x} \HGF{j}{i}{\bm b}{\bm a}{ -x}=\HGF{j'}{i'}{ \bm b'}{\bm a'}{c x^2}.
 \]
 Then we have
 \begin{gather*}
 \operatorname{Sym}\left(\alpha_1\HGF{i+1}{j}{-2m,1-2m-\bm a }{1-2m-\bm b}{x}\right)\\
 \qquad =\alpha_2 \HGF{i+1}{j+1}{-m,1-m-\bm a' }{1-m-\bm b', \frac{1}{2}}{4c(-1)^{i'+j'+1} x^2} ,
 \end{gather*}
 where $\alpha_1$ and $\alpha_2$ are just constants to make the polynomials monic.
\end{Lemma}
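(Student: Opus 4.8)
The statement is, up to notation, a restatement of Lemma~\ref{lem:symmetrization.H}, so the plan is to run the dictionary between the $\mathcal{H}$-notation of Definition~\ref{def:H.polynomials} and the ${}_iF_j$-notation, using the identity recorded in the Remark just after that definition. First I would identify the polynomial on the left: set $\bm a^{\circ} := -\tfrac{1}{2m}\bm a$ and $\bm b^{\circ} := -\tfrac{1}{2m}\bm b$. By that Remark with $n = 2m$, the hypergeometric polynomial $\HGP{2m}{\bm b^{\circ}}{\bm a^{\circ}}$ is a scalar multiple of $\HGF{i+1}{j}{-2m,\,2m\bm a^{\circ}-2m+1}{2m\bm b^{\circ}-2m+1}{x}$, and $2m\bm a^{\circ}-2m+1 = 1-2m-\bm a$, $2m\bm b^{\circ}-2m+1 = 1-2m-\bm b$. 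Since both polynomials are monic, the scalar is forced; this pins down $\alpha_1$ and identifies the argument of $\operatorname{Sym}$ in the statement with $\HGP{2m}{\bm b^{\circ}}{\bm a^{\circ}}$.

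Next I would check that the hypothesis transports. Because $-2m\bm a^{\circ} = \bm a$ and $-2m\bm b^{\circ} = \bm b$, the assumed product identity $\HGF{j}{i}{\bm b}{\bm a}{x}\,\HGF{j}{i}{\bm b}{\bm a}{-x}=\HGF{j'}{i'}{\bm b'}{\bm a'}{cx^2}$ is exactly condition~\eqref{symcondition} of Lemma~\ref{lem:symmetrization.H} for $\HGP{2m}{\bm b^{\circ}}{\bm a^{\circ}}$, with the right-hand tuples there taken to be $-\tfrac1m\bm b'$ and $-\tfrac1m\bm a'$ (so that $-m\cdot(-\tfrac1m\bm b')=\bm b'$ and $-m\cdot(-\tfrac1m\bm a')=\bm a'$). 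Lemma~\ref{lem:symmetrization.H} then yields
\[
\operatorname{Sym}\bigl(\HGP{2m}{\bm b^{\circ}}{\bm a^{\circ}}\bigr) = \dil{2\sqrt{c(-1)^{i'+j'+1}}}\,\HGPS{m}{-\frac1m\bm b',\,1-\frac{1}{2m}}{-\frac1m\bm a'}.
\]

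Finally I would translate the right-hand side back into ${}_iF_j$-notation by unfolding $\HGPS{m}{\cdot}{\cdot} = \lift_m\bigl(\HGP{m}{\cdot}{\cdot}\bigr)$ and invoking the Remark once more, now with $n = m$. The affine reparametrization sends $-\tfrac1m\bm a'\mapsto m(-\tfrac1m\bm a')-m+1 = 1-m-\bm a'$ and $-\tfrac1m\bm b'\mapsto 1-m-\bm b'$, while the extra slot $1-\tfrac1{2m}$ contributes $m\bigl(1-\tfrac1{2m}\bigr)-m+1 = \tfrac12$, which is the source of the parameter $\tfrac12$ in the conclusion. The degree-doubling map $\lift_m$ then turns the variable into $x^2$, the dilation $\dil{2\sqrt{c(-1)^{i'+j'+1}}}$ rescales the argument and accounts for the factor $4c(-1)^{i'+j'+1}$ in the statement, and collecting the accumulated scalar prefactors into the monic-normalizing constant $\alpha_2$ gives the claimed identity.

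The proof carries no new mathematical content beyond Lemma~\ref{lem:symmetrization.H}; the one delicate point is the bookkeeping — running the substitution $\bm c \mapsto c_s n - n + 1$ consistently through twice (once with $n=2m$ on the left, once with $n=m$ on the right) and tracking how $\lift_m$ and the dilation act on the argument of the ${}_iF_j$ series, in particular that the dilation constant $2\sqrt{c(-1)^{i'+j'+1}}$ enters the argument squared. Once these substitutions are handled carefully, every scalar is absorbed by monicity and the lemma follows.
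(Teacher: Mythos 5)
Your strategy --- reparametrize via the remark after Definition~\ref{def:H.polynomials}, transport the product identity into condition~\eqref{symcondition}, apply Lemma~\ref{lem:symmetrization.H}, and translate back --- is exactly the intended argument (the paper offers no independent proof, simply declaring the two lemmas equivalent), and your affine bookkeeping is correct: $2m\bm a^{\circ}-2m+1=1-2m-\bm a$, the extra slot $1-\frac{1}{2m}$ produces the lower parameter $\frac12$, and the hypothesis transports as you say. The gap is at the one step you yourself flag as delicate and then do not actually carry out: the direction in which the dilation constant enters the argument. Since $[\dil{s}p](x)=s^{2m}p\bigl(s^{-1}x\bigr)$, writing $\HGP{m}{-\frac1m\bm b',\,1-\frac{1}{2m}}{-\frac1m\bm a'}(y)=\gamma\,\HGF{i'+1}{j'+1}{-m,1-m-\bm a'}{1-m-\bm b',\frac{1}{2}}{y}$ and applying $\lift_{m}$ and then $\dil{s}$ with $s=2\sqrt{c(-1)^{i'+j'+1}}$ gives
\[
s^{2m}\gamma\;\HGF{i'+1}{j'+1}{-m,1-m-\bm a'}{1-m-\bm b',\frac{1}{2}}{\frac{x^{2}}{s^{2}}},\qquad \frac{1}{s^{2}}=\frac{(-1)^{i'+j'+1}}{4c},
\]
so the constant multiplying $x^{2}$ is $s^{-2}$, not $s^{2}=4c(-1)^{i'+j'+1}$ as you assert. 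A prefactor $\alpha_{2}$ cannot absorb a rescaling of the argument, so monicity does not repair this; the two constants coincide only when $16c^{2}=1$ (as in the Bessel case, where $c=-\frac14$), which is why the discrepancy is easy to overlook.

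That this is a genuine discrepancy and not a hidden convention can be checked on the Laguerre case with $m=1$: for $p=\HGP{2}{b}{\cdot}(x)=x^{2}-4bx+2b(2b-1)$, a direct computation with Definition~\ref{def:FinFreeConv} gives $\sym{p}=x^{2}-4b$, consistent with $\dil{2}\HGPS{1}{2b,\frac12}{\cdot}$ from Table~\ref{tab:sym.hgp}; here $c=1$ and $(-1)^{i'+j'+1}=1$, and $\HGF{1}{2}{-1}{2b,\frac{1}{2}}{z}=1-\frac{z}{b}$ is proportional to $x^{2}-4b$ for $z=\frac{x^{2}}{4}$ but to $x^{2}-\frac{b}{4}$ for $z=4x^{2}$. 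So the translation, carried out correctly, proves the identity with argument $\frac{(-1)^{i'+j'+1}}{4c}x^{2}$ in place of $4c(-1)^{i'+j'+1}x^{2}$ (and with indices ${}_{i'+1}F_{j'+1}$ rather than ${}_{i+1}F_{j+1}$); your proposal lands on the printed constant only because the decisive computation is asserted rather than performed. Carrying out the dilation step explicitly as above, the rest of your argument goes through verbatim and establishes the corrected statement.
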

Using Lemma \ref{lem:symmetrization.H.new} and some well-known results on products of hypergeometric functions from Proposition~\ref{prop:prod.hg.series.sym}, we can compute the symmetrizations of various classical polynomials, see Table~\ref{tab:sym.hgp.new} which is just an adaptation of Table~\ref{tab:sym.hgp}.

\begin{table}[ht]\renewcommand{\arraystretch}{1.8}
 \centering
 \begin{tabular}{|c|c|c|}
 \hline
 Polynomial & $p$ & $\mathrm{Sym}(p)$ \\
 \hline
 Laguerre & $\HGF{1}{1}{-2m}{ b}{x}$ & $\HGF{1}{2}{-m}{ b+m, \frac{1}{2}}{4x^2}$ \\
 \hline
 Bessel & $\HGF{2}{0}{-2m,a}{\cdot}{x}$ & $\HGF{4}{1}{-m,a+m,\frac{a+1}{2},\frac{a}{2}}{ \frac{1}{2}}{-x^2}$ \\
 \hline
 Jacobi & $\HGF{2}{0}{-2m,a}{b}{x}$ & $\HGF{4}{3}{-m,a+m,\frac{a+1}{2},\frac{a}{2}}{b+m, a-b-m+1, \frac{1}{2}}{x^2}$ \\
 \hline
 \small Lag $\boxtimes_{2m}$ Lag & $\HGF{1}{2}{-2m}{b,d}{x}$ & $\HGF{2}{5}{b+d+3m-1}{b+m,d+m,\frac{b+d}{2}+m,\frac{b+d-1}{2}+m,\frac{1}{2}}{16x^2}$ \\
 \hline
 \small Bes $\boxtimes_{2m}$ Bes & $\HGF{1}{2}{-2m,a,c}{\cdot}{x}$ & \small $\HGF{8}{4}{-m,a+m,c+m, \frac{a+1}{2},\frac{c+1}{2},\frac{a}{2},\frac{c}{2},\frac{a+c+1}{2}+m}{\frac{a+c+m+2}{3},\frac{a+c+m+1}{3},\frac{a+c+m}{3},\frac{1}{2}}{\frac{-64x^2}{27}}$ \\
 \hline
 \end{tabular}
 \caption{Symmetrization of hypergeometric polynomials.} \label{tab:sym.hgp.new}
\end{table}

Finally, Proposition~\ref{prop:EvenHyperMultConv} can be rewritten as follows.

\begin{Proposition}[multiplicative convolution of even hypergeometric polynomials] \label{prop:EvenHyperMultConv.new}
 Consider tuples $\bm a_1$, $\bm a_2$, $\bm b_1$, $\bm b_2$ of sizes $i_1$, $i_2$, $j_1$, $j_2$. Then
 \begin{gather*}
 \alpha_1 \HGF{i_1+1}{j_1}{-m, \bm a_1}{\bm b_1 }{x^2}\boxtimes_{2m}\alpha_2 \HGF{i_2+1}{j_2}{-m, \bm a_2}{\bm b_2 }{x^2} \\
 \qquad= \alpha_3 \HGF{i_1+i_2+2}{j_1+j_2+1}{-m, \frac{1}{2}, \bm a_1 , \bm a_2 }{\frac{1}{2}-m, \bm b_1, \bm b_2}{x^2} , \end{gather*}
 where $\alpha_1$, $\alpha_2$ and $\alpha_3$ are just constants to make the polynomials monic.
\end{Proposition}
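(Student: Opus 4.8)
The plan is to obtain Proposition~\ref{prop:EvenHyperMultConv.new} from Proposition~\ref{prop:EvenHyperMultConv} by a change of parameterization, exactly as Theorem~\ref{thm:add.hyper.general.new} restates Theorem~\ref{thm:add.hyper.general}. The first step is to fix the dictionary between the two notations for even hypergeometric polynomials. Combining the identity relating $\HGP{m}{\cdot}{\cdot}$ to a terminating ${}_{i+1}F_j$ series from the Remark after Definition~\ref{def:H.polynomials} with the definition $\HGPS{m}{\bm b}{\bm a} = \HGP{m}{\bm b}{\bm a}(x^2)$, one finds that for $\mathcal{H}$-parameters $\bm B \in \rr^j$ (upper) and $\bm A \in \rr^i$ (lower), the monic even hypergeometric polynomial $\HGPS{m}{\bm B}{\bm A}$ is a scalar multiple of $\HGF{i+1}{j}{-m, m\bm A - m + 1}{m\bm B - m + 1}{x^2}$. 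In other words, passing from the $\mathcal{H}$-notation of Notation~\ref{not:even.hgp} to the ${}_iF_j$-notation of Notation~\ref{not:even.hgp.new} is effected by the affine substitution $t \mapsto mt - m + 1$ applied entrywise, together with a swap of the upper and lower slots and the insertion of the extra top parameter $-m$.

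Next I would apply Proposition~\ref{prop:EvenHyperMultConv} to the $\mathcal{H}$-polynomials corresponding to the two given factors, i.e., with $\bm B_k, \bm A_k$ determined by $m\bm A_k - m + 1 = \bm a_k$ and $m\bm B_k - m + 1 = \bm b_k$, obtaining
\[ \HGPS{m}{\bm B_1}{\bm A_1} \boxtimes_{2m} \HGPS{m}{\bm B_2}{\bm A_2} = \HGPS{m}{-\frac{1}{2m}, \bm B_1, \bm B_2}{1 - \frac{1}{2m}, \bm A_1, \bm A_2}. \]
It then remains to translate the right-hand side back into ${}_iF_j$-notation. Under $t \mapsto mt - m + 1$, the new lower $\mathcal{H}$-parameter $1 - \frac{1}{2m}$ maps to the new upper ${}_iF_j$-parameter $m\bigl(1 - \frac{1}{2m}\bigr) - m + 1 = \frac{1}{2}$, and the new upper $\mathcal{H}$-parameter $-\frac{1}{2m}$ maps to the new lower ${}_iF_j$-parameter $m\bigl(-\frac{1}{2m}\bigr) - m + 1 = \frac{1}{2} - m$, while the tuples $\bm A_k, \bm B_k$ revert to $\bm a_k, \bm b_k$. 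The resulting series therefore has upper parameters $-m, \frac{1}{2}, \bm a_1, \bm a_2$ (count $i_1 + i_2 + 2$) and lower parameters $\frac{1}{2} - m, \bm b_1, \bm b_2$ (count $j_1 + j_2 + 1$), which is precisely the claimed right-hand side after absorbing leading coefficients into $\alpha_1, \alpha_2, \alpha_3$.

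I do not anticipate any genuine obstacle: the argument is entirely bookkeeping of the affine substitution and of the lengths of the parameter tuples. The one point that deserves a sentence of care is the mutual consistency of the normalizing constants $\alpha_1, \alpha_2, \alpha_3$ on the two sides; this is automatic, since every $\HGPS{m}{\cdot}{\cdot}$ occurring in Proposition~\ref{prop:EvenHyperMultConv} is monic, which pins down each $\alpha_k$ as the reciprocal of the leading coefficient of the corresponding ${}_iF_j$ series and makes the displayed equality of monic polynomials pass through unchanged. Should one wish to avoid relying on Proposition~\ref{prop:EvenHyperMultConv}, the statement can also be proved directly at the level of coefficients: writing $\raising{-m}{k} = (-1)^k \falling{m}{k}$ and using the duplication identity \eqref{eq.falling.factorial.2m} to split $\falling{2m}{2k}$, the comparison of the coefficient of $x^{2k}$ on the two sides reduces to the very computation already carried out in the proof of Proposition~\ref{prop.multiplicative.halving}.
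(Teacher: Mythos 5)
Your proposal is correct and follows essentially the same route as the paper: Proposition~\ref{prop:EvenHyperMultConv.new} is presented there precisely as the restatement of Proposition~\ref{prop:EvenHyperMultConv} under the dictionary \smash{$\HGPS{m}{\bm B}{\bm A} = \text{const}\cdot\HGF{i+1}{j}{-m,\, m\bm A-m+1}{m\bm B-m+1}{x^2}$} from the remark after Definition~\ref{def:H.polynomials}, and your bookkeeping of the affine substitution (sending $1-\frac{1}{2m}\mapsto\frac12$ upstairs and $-\frac{1}{2m}\mapsto\frac12-m$ downstairs) and of the tuple lengths is exactly right. The remark about the normalizing constants being forced by monicity is also the correct (and only) point of care.
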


This identity is specially useful to compute the commutator of hypergeometric polynomials. Recall from Section~\ref{sec:commutator} that for polynomials $p$ and $q$ with degree $2m$, their commutator is defined as the polynomial
\[
p \square_{2m} q := \mathrm{Sym}(p)\boxtimes_{2m} \mathrm{Sym}(q) \boxtimes_{2m} z_{2m},
\]
where
\[ z_{2m}(x) =\alpha \HGF{4}{3}{-m, \frac{1}{2}-m, \frac{1}{2}-m, m+2}{m+1, m+1, \frac{1}{2}}{\frac{x^2}{4}} , \]
and $\alpha$ is just a constant to make the polynomial monic.

In Table~\ref{tab:com.hgp.new}, we provide examples of the commutator of hypergeometric polynomials directly in terms of ${}_iF_j$ functions. These are the examples from Table~\ref{tab:com.hgp} after changing the notation.

\begin{table}[ht]\renewcommand{\arraystretch}{1.8}
 \centering
 \begin{tabular}{|c|c|c|c|}
 \hline $p$ & $q$ & $p \square_{2m} q$ \\
 \hline \hline
 $\HGF{2}{1}{-2m, 1 }{-m+1}{x}$ & $\HGF{2}{1}{-2m, 1 }{-m+1}{x}$ & $\HGF{3}{2}{-m, \frac{1}{2}, m+2 }{1, 1}{\frac{1}{4}x^2}$ \\
 \hline
 $\HGF{1}{1}{-m }{\frac{1}{2}}{x^2}$ & $\HGF{1}{1}{-m }{\frac{1}{2}}{x^2}$ & $\HGF{2}{3}{-m, m+2 }{\frac{1}{2}, m+1, m+1}{\frac{1}{4}x^2}$ \\
 \hline
 $\HGF{1}{1}{-m }{\frac{1}{2}}{x^2}$ & $\HGF{2}{1}{-2m, 1 }{-m+1}{x}$ & $\HGF{2}{2}{-m, m+2 }{1, m+1}{\frac{1}{4}x^2}$ \\
 \hline
 $\HGF{1}{1}{-2m }{b}{x}$ & $\HGF{1}{1}{-2m}{d}{x}$ & $\HGF{2}{5}{-m, m+2}{2b-m, 2d-m, m+1, m+1 , \frac{1}{2}}{\frac{1}{4}x^2}$ \\
 \hline
 $\HGF{2}{1}{-2m, 2m+1}{1}{x}$ & $\HGF{2}{1}{-2m, 2m+1}{1}{x}$ &\small $\HGF{6}{5}{-m, 3m+1, 3m+1, m+\frac{1}{2}, m+\frac{1}{2}, m+2}{m+1, m+1, m+1, m+1, \frac{1}{2}}{\frac{1}{4}x^2}$ \\
 \hline
 \end{tabular}
 \caption{Examples of finite free commutators.} \label{tab:com.hgp.new}
\end{table}

\subsection*{Acknowledgements}

This project originated during the 2023 Workshop in Analysis and Probability at Texas A\&M University. We thank the organizers of the conferences YMC*A and IWOTA in August 2024, where two of the authors had fruitful discussions. We also thank Andrew Campbell for bringing some recent references to our attention. We greatly appreciate the corrections and useful suggestions offered by the anonymous referees.
D.P. was partially supported by the AMS-Simons Travel Grant. The author also appreciates the hospitality of University of Virginia during April~2024.

\pdfbookmark[1]{References}{ref}
\LastPageEnding


\begin{thebibliography}{99}
\footnotesize\itemsep=0pt

\bibitem{arizmendi2024s}
Arizmendi O., Fujie K., Perales D., Ueda Y., {$S$}-transform in finite free
 probability, \href{http://arxiv.org/abs/2408.09337}{arXiv:2408.09337}.

\bibitem{arizmendi2021finite}
Arizmendi O., Garza-Vargas J., Perales D., Finite free cumulants:
 multiplicative convolutions, genus expansion and infinitesimal distributions,
 \href{https://doi.org/10.1090/tran/8884}{\textit{Trans. Amer. Math. Soc.}}
 \textbf{376} (2023), 4383--4420,
 \href{http://arxiv.org/abs/2108.08489}{arXiv:2108.08489}.

\bibitem{AHS13}
Arizmendi O., Hasebe T., Sakuma N., On the law of free subordinators,
 \textit{ALEA Lat. Am.~J. Probab. Math. Stat.} \textbf{10} (2013), 271--291,
 \href{http://arxiv.org/abs/1201.0311}{arXiv:1201.0311}.

\bibitem{arizmendi2018cumulants}
Arizmendi O., Perales D., Cumulants for finite free convolution,
 \href{https://doi.org/10.1016/j.jcta.2017.11.012}{\textit{J.~Combin. Theory
 Ser.~A}} \textbf{155} (2018), 244--266,
 \href{http://arxiv.org/abs/1611.06598}{arXiv:1611.06598}.
 
\bibitem{arizmendi2009S}
Arizmendi O., P\'erez-Abreu V., The {$S$}-transform of symmetric probability measures
 with unbounded supports,
 \href{https://doi.org/10.1090/S0002-9939-09-09841-4}{\textit{Proc. Amer.
 Math. Soc.}} \textbf{137} (2009), 3057--3066.

\bibitem{benaych2009rectangular}
Benaych-Georges F., Rectangular random matrices, related convolution,
 \href{https://doi.org/10.1007/s00440-008-0152-z}{\textit{Probab. Theory
 Related Fields}} \textbf{144} (2009), 471--515,
 \href{http://arxiv.org/abs/math.OA/0507336}{arXiv:math.OA/0507336}.

\bibitem{Billingsley-convergence}
Billingsley P., Convergence of probability measures, 2nd~ed., \textit{Wiley Ser. Probab. Statist. Appl. Probab. Statist.},
 \href{https://doi.org/10.1002/9780470316962}{John Wiley \& Sons}, New York,
 1999.

\bibitem{campbell2024free}
Campbell A., Free infinite divisibility, fractional convolution powers, and
 {A}ppell polynomials,
 \href{http://arxiv.org/abs/2412.20488}{arXiv:2412.20488}.

\bibitem{campbell2024universality}
Campbell A., O'Rourke S., Renfrew D., Universality for roots of derivatives of
 entire functions via finite free probability,
 \href{http://arxiv.org/abs/2410.06403}{arXiv:2410.06403}.

\bibitem{campbell2022commutators}
Campbell J., Commutators in finite free probability~{I},
 \href{http://arxiv.org/abs/2209.00523}{arXiv:2209.00523}.

\bibitem{DJ02}
Driver K., Jordaan K., Zeros of {${}_3F_2\bigl(\begin{smallmatrix} -n,b,c\\
 d,e\end{smallmatrix};z\bigr)$} polynomials,
 \href{https://doi.org/10.1023/A:1020126822435}{\textit{Numer. Algorithms}}
 \textbf{30} (2002), 323--333.

\bibitem{TalkGribinski}
Gribinski A., Rectangular finite free probability theory, {O}nline talk at {UC}
 {B}erkeley {P}robabilistic {O}perator {A}lgebra {S}eminar, April~24, 2023.

\bibitem{gribinski2022rectangular}
Gribinski A., Marcus A.W., A rectangular additive convolution for polynomials,
 \href{https://doi.org/10.5070/c62156888}{\textit{Comb. Theory}} \textbf{2}
 (2022), 16, 42~pages, \href{http://arxiv.org/abs/1904.11552}{arXiv:1904.11552}.

\bibitem{grinshpan2013generalized}
Grinshpan A.Z., Generalized hypergeometric functions: product identities and
 weighted norm inequalities,
 \href{https://doi.org/10.1007/s11139-013-9487-x}{\textit{Ramanujan~J.}}
 \textbf{31} (2013), 53--66.

\bibitem{hinz2010multiplicative}
Hinz M., M{\l}otkowski W., Multiplicative free square of the free {P}oisson
 measure and examples of free symmetrization,
 \href{https://doi.org/10.4064/cm119-1-8}{\textit{Colloq. Math.}} \textbf{119}
 (2010), 127--136.

\bibitem{MR2656096}
Koekoek R., Lesky P.A., Swarttouw R.F., Hypergeometric orthogonal polynomials
 and their {$q$}-analogues, \textit{Springer Monogr. Math.},
 \href{https://doi.org/10.1007/978-3-642-05014-5}{Springer}, Berlin, 2010.

\bibitem{KM16}
Kornyik M., Michaletzky G., Wigner matrices, the moments of roots of {H}ermite
 polynomials and the semicircle law,
 \href{https://doi.org/10.1016/j.jat.2016.07.006}{\textit{J.~Approx. Theory}}
 \textbf{211} (2016), 29--41,
 \href{http://arxiv.org/abs/1512.03724}{arXiv:1512.03724}.

\bibitem{marcus}
Marcus A.W., Polynomial convolutions and (finite) free probability,
 \href{http://arxiv.org/abs/2108.07054}{arXiv:2108.07054}.

\bibitem{MSS15}
Marcus A.W., Spielman D.A., Srivastava N., Finite free convolutions of
 polynomials,
 \href{https://doi.org/10.1007/s00440-021-01105-w}{\textit{Probab. Theory
 Related Fields}} \textbf{182} (2022), 807--848,
 \href{http://arxiv.org/abs/1504.00350}{arXiv:1504.00350}.

\bibitem{marden1966geometry}
Marden M., Geometry of polynomials, 2nd ed., \textit{Math. Surveys}, American
 Mathematical Society, Providence, RI, 1966.

\bibitem{martinez2025zeros}
Mart\'{\i}nez-Finkelshtein A., Morales R., Perales D., Zeros of generalized
 hypergeometric polynomials via finite free convolution: applications to
 multiple orthogonality,
 \href{https://doi.org/10.1007/s00365-025-09703-w}{\textit{Constr. Approx.}},
 {t}o appear, \href{http://arxiv.org/abs/2404.11479}{arXiv:2404.11479}.

\bibitem{mfmp2024hypergeometric}
Mart\'{\i}nez-Finkelshtein A., Morales R., Perales D., Real roots of
 hypergeometric polynomials via finite free convolution,
 \href{https://doi.org/10.1093/imrn/rnae120}{\textit{Int. Math. Res. Not.}}
 \textbf{2024} (2024), 11642--11687,
 \href{http://arxiv.org/abs/2309.10970}{arXiv:2309.10970}.

\bibitem{MS-book}
Mingo J.A., Speicher R., Free probability and random matrices, \textit{Fields
 Inst. Monogr.}, Vol.~35,
 \href{https://doi.org/10.1007/978-1-4939-6942-5}{Springer}, New York, 2017.

\bibitem{mlotkowski2011symmetrization}
M{\l}otkowski W., Sakuma N., Symmetrization of probability measures,
 pushforward of order~2 and the {B}oolean convolution, in Noncommutative
 {H}armonic {A}nalysis with {A}pplications to {P}robability~{III},
 \textit{Banach Center Publ.}, Vol.~96,
 \href{https://doi.org/10.4064/bc96-0-18}{Institute of Mathematics of the
 Polish Academy of Sciences}, Warsaw, 2012, 271--276.

\bibitem{NS98}
Nica A., Speicher R., Commutators of free random variables,
 \href{https://doi.org/10.1215/S0012-7094-98-09216-X}{\textit{Duke Math.~J.}}
 \textbf{92} (1998), 553--592,
 \href{http://arxiv.org/abs/funct-an/9612001}{arXiv:funct-an/9612001}.

\bibitem{NS-book}
Nica A., Speicher R., Lectures on the combinatorics of free probability,
 \textit{London Math. Soc. Lecture Note Ser.}, Vol.~335,
 \href{https://doi.org/10.1017/CBO9780511735127}{Cambridge University Press},
 Cambridge, 2006.

\bibitem{MR2723248}
Olver F.W.J., Lozier D.W., Boisvert R.F., Clark C.W. (Editors), {NIST} handbook
 of mathematical functions, Cambridge University Press, Cambridge, 2010.

\bibitem{perez2012free}
P\'erez-Abreu V., Sakuma N., Free infinite divisibility of free multiplicative
 mixtures of the {W}igner distribution,
 \href{https://doi.org/10.1007/s10959-010-0288-5}{\textit{J.~Theoret.
 Probab.}} \textbf{25} (2012), 100--121,
 \href{http://arxiv.org/abs/0910.1199}{arXiv:0910.1199}.

\bibitem{suffridge1976starlike}
Suffridge T.J., Starlike functions as limits of polynomials, in Advances in
 {C}omplex {F}unction {T}heory, \textit{Lecture Notes in Math.}, Vol.~505,
 \href{https://doi.org/10.1007/BFb0081105}{Springer}, Berlin, 1976, 164--203.

\bibitem{szego1922bemerkungen}
Szeg\"o G., Bemerkungen zu einem {S}atz von~{J}.{H}.~{G}race \"uber die
 {W}urzeln algebraischer {G}leichungen,
 \href{https://doi.org/10.1007/BF01485280}{\textit{Math.~Z.}} \textbf{13}
 (1922), 28--55.

\bibitem{voiculescu1992free}
Voiculescu D.V., Dykema K.J., Nica A., Free random variables, \textit{CRM
 Monogr. Ser.}, Vol.~1, \href{https://doi.org/10.1090/crmm/001}{American
 Mathematical Society}, Providence, RI, 1992.

\bibitem{walsh1922location}
Walsh J.L., On the location of the roots of certain types of polynomials,
 \href{https://doi.org/10.2307/1989023}{\textit{Trans. Amer. Math. Soc.}}
 \textbf{24} (1922), 163--180.

\end{thebibliography}
\end{document}